\DeclareFontFamily{OT1}{pzc}{}
\DeclareFontShape{OT1}{pzc}{m}{it}{<-> s * [1.10] pzcmi7t}{}
\DeclareMathAlphabet{\mathpzc}{OT1}{pzc}{m}{it}
\newcommand{\ott}{\otimes_{\tau}}
\newcommand{\uno}{\mathbf{1}}
\newcommand{\ba}{ \mathbf{a}}
\newcommand{\Cgot}{\mathfrak C}
\newcommand{\Mgot}{\mathfrak M}
\newcommand{\Jgot}{\mathfrak J}
\newcommand{\At}{\mathtt{A}}
\newcommand{\Rt}{\mathtt{R}}
\newcommand{\Zt}{\mathtt{Z}}
\newcommand{\kut}{ \Bbbk^{\times}}
\newcommand{\leqnomode}{\tagsleft@true}
\newcommand{\reqnomode}{\tagsleft@false}
\renewcommand{\_}[1]{_{\left( #1 \right)}}
\newcommand{\Dchaintwo}[3]{\xymatrix@C-4pt{\overset{#1}{\underset{\ }{\circ}}\ar
@{-}[r]^{#2}
& \overset{#3}{\underset{\ }{\circ}}}}
\newcommand{\Dchainfive}[9]{\xymatrix@C-6pt{\overset{#1}{\underset{\ }{\circ}}\ar
@{-}[r]^{#2}  & \overset{#3}{\underset{\ }{\circ}}\ar  @{-}[r]^{#4}  &
\overset{#5}{\underset{\ }{\circ}}
\ar  @{-}[r]^{#6}  & \overset{#7}{\underset{\ }{\circ}}\ar  @{-}[r]^{#8}  &
\overset{#9}{\underset{\ }{\circ}}}}
\newcommand{\Bg}{\mathfrak{B}}
\newcommand{\Ann}{\operatorname{Ann}}
\newcommand{\Der}{\operatorname{Der}}
\newcommand{\End}{\operatorname{End}}
\newcommand{\car}{\operatorname{char}}
\newcommand{\coh}{\operatorname{H}}
\newcommand{\hoch}{\operatorname{HH}}
\newcommand{\Jac}{\operatorname{Jac}}
\newcommand{\gr}{\operatorname{gr}}
\newcommand{\Hom}{\operatorname{Hom}}
\newcommand{\Ext}{\operatorname{Ext}}
\newcommand{\id}{\operatorname{id}} 
\newcommand{\im}{\operatorname{im}}
\newcommand{\lcm}{\operatorname{lcm}}
\newcommand{\ord}{\operatorname{ord}}
\newcommand{\op}{\operatorname{op}}
\newcommand{\GK}{\operatorname{GKdim}}
\newcommand{\Spec}{\operatorname{Spec}}
\newcommand{\REnd}{\operatorname{REnd}}
\newcommand{\RHom}{\operatorname{RHom}}
\newcommand{\lgo}{\mathfrak l}
\newcommand{\mgo}{\mathfrak m}
\newcommand{\ugo}{\mathfrak u}
\newcommand{\Rep}{\operatorname{Rep}}
\newcommand{\dgMod}{\text{-}\operatorname{dgmod}}
\newcommand{\gldim}{\operatorname{gldim}}
\newcommand{\projdim}{\operatorname{projdim}}
\newcommand{\ku}{ \Bbbk}
\newcommand{\fp}{{\mathbb F}_{\hspace{-2pt}p}}
\newcommand{\I}{\mathbb I}
\newcommand{\N}{\mathbb N}
\newcommand{\Z}{\mathbb Z}
\newcommand{\toba}{\mathscr{B}}
\newcommand{\bq}{\mathfrak{q}}
\newcommand{\Bc}{\mathcal B}
\newcommand{\Pc}{{\mathcal P}}
\newcommand{\Jc}{\mathcal{J}}
\newcommand{\Cc}{{\mathcal C}}
\newcommand{\Ss}{\mathcal{S}}
\newcommand{\yd}[1]{{}^{#1}_{#1}\mathcal{YD}}
\newcommand{\pf}{\begin{proof}}
\newcommand{\epf}{\end{proof}}
\newtheorem*{rep@theorem}{\rep@title}
\newcommand{\newreptheorem}[2]{%
	\newenvironment{rep#1}[1]{%
		\def\rep@title{#2 \ref{##1}}%
		\begin{rep@theorem}}%
		{\end{rep@theorem}}}
\newtheorem{theorem}{Theorem}[section]
\newtheorem{lemma}[theorem]{Lemma}
\newtheorem{cor}[theorem]{Corollary}
\newtheorem{prop}[theorem]{Proposition}
\theoremstyle{definition}
\newtheorem{definition}[theorem]{Definition}
\newtheorem{example}[theorem]{Example}
\newtheorem*{observation*}{Observation}
\newtheorem{question}[theorem]{Question}
\newtheorem*{claim*}{Claim}
\numberwithin{equation}{section}
\theoremstyle{remark}
\newtheorem{remark}[theorem]{Remark}
\newcommand{\ot}{\otimes}
\newcommand{\Vc}{\mathcal V}
\newcommand{\Vs}{\mathscr{V}}
\newcommand{\dg}{{\operatorname{dg}}}
\begin{document}

\noindent
\title[On the finite generation of the
cohomology of bosonizations]
{On the finite generation of the cohomology of bosonizations}

\author[Andruskiewitsch \emph{et. al.}]
{Nicol\'as Andruskiewitsch}
\address{FaMAF-CIEM (CONICET), Universidad Nacional de C\'ordoba,
Me\-dina A\-llen\-de s/n, \linebreak\indent  Ciudad Universitaria, 5000 C\' ordoba,  Argentina} 
\email{nicolas.andruskiewitsch@unc.edu.ar}

\author[]{David Jaklitsch}
\address{Department of Mathematics, University of Oslo,
Moltke Moes vei 35, 0851 Oslo, Norway} 
\email{dajak@math.uio.no}

\author[]{Van C. Nguyen}
\address{Department of Mathematics, United States Naval Academy, Annapolis, MD 21402, USA}
\email{vnguyen@usna.edu}

\author[]{Amrei Oswald}
\address{Department of Mathematics, University of Washington, Seattle, WA 98195, USA} 
\email{amreio@uw.edu}

\author[]{Julia Plavnik}
\address{Department of Mathematics, Indiana University, Bloomington, IN 47405, USA} 
\address{Department of Mathematics and Data Science, Vrije Universiteit Brussel, 1050 Brussels, Belgium}
\email{jplavnik@iu.edu}

\author[]{Anne V.\ Shepler}
\address{Department of Mathematics, University of North Texas,
Denton, Texas 76203, USA}
\email{anne.shepler@unt.edu}

\author[]{Xingting Wang}
\address{Louisiana State University, Baton Rouge, Louisiana, 70803, USA} 
\email{xingtingwang@math.lsu.edu}

\thanks{\noindent 2020 \emph{Mathematics Subject Classification.}
16T05; 16E40; 18M05.}

\thanks{Date: \today.}

\begin{abstract}
We use deformation sequences of (Hopf) algebras, extending the results of Negron and Pevtsova, to show that bosonizations of some suitable braided Hopf algebras by some suitable finite-dimensional Hopf algebras have finitely generated cohomology. In fact, our results are shown in more generality for smash products. As applications, we prove the bosonizations of some Nichols algebras (such as  Nichols algebras of diagonal type, the restricted Jordan plane, Nichols algebras of 
direct sums of Jordan blocks plus points labeled with 1), 
by some suitable finite-dimensional Hopf algebras, have finitely generated cohomology, recovering some known results as well as providing new examples. 
\end{abstract}

\maketitle

\setcounter{tocdepth}{1}

\section{Introduction}\label{sec:problem}

\subsection{The context}
Let $\ku$ be a field, which we may assume  algebraically closed by  \cite{NWW2021}*{Lemma 3.1}. 
In this paper, we contribute to the Etingof--Ostrik conjecture on the finite generation
of  the cohomology rings of finite tensor categories \cite{etingof-ostrik03}, cf. also \cite{friedlander-suslin}. We shall say that a finite-dimensional Hopf algebra $H$ over $\ku$ has finitely generated cohomology, fgc for short, if 

\medbreak
\begin{enumerate}[leftmargin=7ex,label=\rm{(fgc\arabic*)}]
\item\label{item:fgca} the cohomology ring $\coh(H, \ku) = \bigoplus_{n\in \N_0} \Ext^n_H(\ku, \ku)$ is finitely generated, and 

\medbreak
\item\label{item:fgcb} 
$\coh(H,M) = \bigoplus_{n\in \N_0} \Ext^n_H(\ku, M)$ is a finitely generated $\coh(H, \ku)$-mo\-dule, for any finitely generated $H$-module $M$.
\end{enumerate}

For a finite tensor category $\Cc$, the fgc property can be formulated by replacing $\ku$ with the monoidal unit $\uno$.  
See \cite{aapw}*{Section 1.1} for background and references to the previous work.
Notice that for defining the cohomology ring, and thus the fgc property, it is enough to consider an augmented algebra, i.e., a $\ku$-algebra $R$ together with an algebra map $\epsilon_R\colon R\to\ku$.

\medbreak
One possible approach to the conjecture is to argue recursively, 
that is, to reduce it to smaller algebras that have fgc.
For example let us consider the bosonization $R\# K$ where $K$ is a finite-dimensional Hopf algebra 
and $R$ is a finite-dimensional Hopf algebra in the category $\yd{K}$  of Yetter-Drinfeld
modules over $K$, see \cite{radford-book}.
If the Hopf algebra $R\# K$ has fgc, then both $K$ and $R$ have fgc
by \cite{aapw}*{Theorem 3.2.1}.
It is natural to raise the following question.

\begin{question}
\label{question:bosonization} \cite{andruskiewitsch-natale}*{Section 4.1}
Let $K$ be a finite-dimensional Hopf algebra 
and $R$ a finite-dimensional Hopf algebra in $\yd{K}$.
If  $K$ and $R$  have fgc, does the bosonization $R\# K$ also have fgc?
\end{question}

Summarizing previous contributions and results in the present paper, we know now that Question \ref{question:bosonization} has a positive answer in the following cases:

\begin{enumerate}[leftmargin=7ex,label=\rm{(\alph*)}]
\item By \cites{aapw,andruskiewitsch-natale}, when $K = \ku G$, $G$ is a finite group and $\operatorname{char} \Bbbk$ does not divide $\vert G\vert$; 
and $R$ has fgc. See Theorem \ref{th:RtoRsmashH}.

\medbreak
\item By \cites{aapw,andruskiewitsch-natale}, when $K = \ku^G \coloneqq\Hom(\ku G, \ku)$, $G$ is a finite group; and $R$ has fgc. See Theorem \ref{th:RtoRsmashH}.

\medbreak
\item By \cite{negron}, when $K$ is cocommutative 
and $R$ is its $\ku$-dual $K^*$ where $K$ acts on $K^*$ by the coadjoint action.
Indeed, it is well-known that $K^* \# K\cong D(K)$, the Drinfeld double of $K$.

\medbreak
\item When $K$ is semisimple and $R$ admits a deformation sequence. See Theorem \ref{thm:K-semisimple-R-fgc}.

\medbreak
\item When $K$ is cocommutative and $R$ admits a $K$-equivariant deformation sequence. 
See Theorem \ref{theorem:negron}.

\medbreak
\item When $K$ admits a deformation sequence $\mathfrak C$ and $R$ admits a $\mathfrak C$-equivariant deformation sequence.  See Theorem \ref{thm:fgc-smash}. 
\end{enumerate}

\vspace{1ex}

\subsection{The main results and terminology}
In this paper, we contribute to Question \ref{question:bosonization}
 based on the concept of deformation sequence. 
Since several variations of  this notion are used,
we list them below. 
The terminology comes from \cite{negron},
although the meaning is slightly different.
We start with the notions discussed in Section \ref{sec:deformations}.

\begin{itemize}  [leftmargin=7ex]\renewcommand{\labelitemi}{$\circ$}
\item A \emph{deformation sequence} is a pair  of algebra maps
$Z \overset{\iota}{\hookrightarrow} Q \overset{\pi}{\twoheadrightarrow} R$
satisfying the seven conditions in Definition \ref{def:defor-seq}; particularly
$Q$ has finite global dimension and is module finite over the smooth central subalgebra $Z$, while $R$ is finite-dimensional. We say that $R$ admits a deformation sequence $Z \overset{\iota}{\hookrightarrow} Q \overset{\pi}{\twoheadrightarrow} R$ if such one exists. 

\medbreak
\item Closely related to the previous concept, 
a \emph{formal deformation sequence} is a pair of algebra maps
$\ku[[x_1,\cdots,x_n]]\overset{\iota}{\hookrightarrow}\widehat{Q}\overset{\pi}{\twoheadrightarrow} R$
as in Definition \ref{def:defor-seq-formal}.
\end{itemize}

Any deformation sequence gives rise to a formal one, see Lemma \ref{lem:formal-deformation}. Formal deformation sequences are instrumental to prove that algebras admitting a deformation sequence have fgc, 
a result from \cite{negron-pevtsova} summarized here in Theorem \ref{thm:negron-pevtsova}.  
Using ideas of the proof, we get our first result:

\begin{reptheorem}{thm:K-semisimple-R-fgc}
 Let $K$ be a semisimple Hopf algebra and 
$R$ a finite-dimensional augmented $K$-module algebra.
If $R$ admits a deformation sequence,
then the smash product $R \rtimes K$ has fgc. 
\end{reptheorem}

\medbreak
We then follow two slightly different paths from \cite{negron}. First, 
we fix a finite-dimensional Hopf algebra $K$ in Section \ref{sec:equivariant-deformation} and consider the following notion.

\medbreak
\begin{itemize} [leftmargin=7ex]\renewcommand{\labelitemi}{$\circ$}
\item  An \emph{equivariant deformation sequence} over a finite-dimensional Hopf algebra $K$ is a deformation sequence whose components are $K$-module algebras and the pair of maps in the sequence are $K$-linear.
\end{itemize}

\medbreak
By a careful analysis of the proof of \cite{negron}, 
we prove in Section \ref{sec:equivariant-deformation} our second result:

\begin{reptheorem}{theorem:negron}
If $K$ is a finite-dimensional cocommutative Hopf algebra and $R$ is an augmented $K$-module algebra that admits a $K$-equivariant deformation sequence, 
then the smash product $R\rtimes K$ has fgc. 
\end{reptheorem}

Second, we introduce in Section \ref{sec:deformations-hopf-alg} an alternative notion:

\medbreak
\begin{itemize} [leftmargin=7ex]\renewcommand{\labelitemi}{$\circ$}
\item A  \emph{ deformation sequence of Hopf algebras}
is a deformation sequence that is a central extension of Hopf algebras,
see Definition \ref{def:dfor-seq-hopf} for the precise formulation.
\end{itemize}

We then study the smash products of deformation sequences and prove our third result:
\begin{reptheorem}{thm:fgc-smash}
Let $K$ be a finite-dimensional Hopf algebra.
If $K$ admits a deformation sequence
$\Cgot$ of Hopf algebras
and $R$ is a finite-dimensional
algebra admitting
a $\Cgot$-equivariant deformation sequence, 
then the smash product $R \rtimes K$ has fgc.
\end{reptheorem}

\vspace{1ex}

\subsection{Applications}

We finish the paper with Section~\ref{sec:examples} to provide examples of Hopf algebras that satisfy the hypotheses of our results, consequently obtain fgc property for these algebras. 
Concretely, we first discuss in Subsection \ref{subsec:nichols-examples}
Nichols algebras as the class of braided Hopf algebras that
are suitable for the applicability of our results. 
Then we consider:

\begin{itemize} [leftmargin=7ex]\renewcommand{\labelitemi}{$\diamond$}
\item Quantum lines, a simple example to illustrate our techniques. 
See Subsection \ref{subsec:equivariant-quantum line}.

\medbreak
\item Quantum linear spaces, the next level of complexity. See Subsection \ref{subsec:equivariant-qls}. 

\medbreak
\item Nichols algebras of diagonal type. We show that the techniques of the present paper apply to Cartan type (up to a small technical condition) but do not seem to work for the other types.
See Subsection \ref{subsec:equivariant-cartan type}.

\medbreak
\item The restricted Jordan plane. See Subsection \ref{subsec:jordan}. 

\medbreak
\item A class of Nichols algebras introduced in \cite{aah-oddchar} that have fgc by \cite{andruskiewitsch-natale}. See Subsection \ref{subsec:blocks-points}.
\end{itemize}
 
For all the Nichols algebras sketched above, we recover bosonizations known to have fgc (most of the time with new proofs) and obtain new examples of bosonizations having fgc.

\medbreak
We close this introduction by pointing out that since we are addressing readers with expertise in Hopf algebras or in homological algebra, we included details
of a few well-known definitions or proofs to reach both communities.

\vspace{1ex}

\subsection{Notations and conventions}
Throughout, let $\ku$ be a base field of any characteristic, unless specified. We take all tensor products over $\ku$ unless otherwise indicated. We abbreviate {\em algebra} for $\ku$-algebra and \emph{affine $\mathbb K$-algebra} for finitely generated algebra over a commutative ring $\mathbb K$. We say an algebra is {\em flat over a subalgebra} if it is flat as a module via (left) multiplication over that subalgebra. We say a graded algebra $A = \bigoplus_{n\in \N_0} A^n$ is {\em connected} if $A^0 \cong \ku$. We denote by $A^H$ the invariant subring of an algebra $A$ under a suitable action of a Hopf algebra $H$. 

For any Hopf algebra $H$ with coproduct $\Delta$, we use Sweedler's notation for the comultiplicative structure, with the summation notation being suppressed, writing $\Delta(h)=
h_{(1)}\ot h_{(2)} \in H \otimes H$ for any $h \in H$. We write $\epsilon_H$ for the counit of a Hopf algebra, or a coalgebra, $H$.

\section{Preliminaries} \label{sec:hopf} 

Our references for Hopf algebras and Nichols algebras  are 
\cite{radford-book} and \cite{andrus-leyva},
respectively. We recall some well-known facts. 

\subsection{Smash products and bosonizations}
Let $H$ be a Hopf algebra. An $H$-module algebra is an algebra $R$ which is also
a left $H$-module and satisfies
\begin{align*}
h \cdot (xy) &= (h\_{1} \cdot x)(h\_{2} \cdot y),&\text{for all } h\in H, \, x,y \in R.
\end{align*}

The smash product $R\rtimes H$   is the vector space $R \otimes H$ with the multiplication
\begin{align*}
(x \otimes h)  (y \otimes t) &= x(h\_{1} \cdot y)\otimes h\_{2}t, &\text{for all } h, t\in H, \, x,y \in R.
\end{align*}
We shall omit the tensor product symbol in the calculations below and record for later use a result whose proof is straightforward.

\begin{lemma}\label{lema:smash}
Let $\varphi: H \to K$ be a morphism of Hopf algebras,
$R$ an $H$-module algebra,  $S$ a $K$-module algebra 
and $\xi: R \to S$ a morphism of  algebras that satisfies
\begin{align*}
\xi(h \cdot x) &= \varphi(h) \cdot \xi(x),&\text{for all } h&\in H, \, x \in R.
\end{align*}
Then $\xi \rtimes \varphi: R\rtimes H \to S\rtimes K$, $xh \mapsto \xi(x)\varphi(h)$,
is an algebra map. \qed
\end{lemma}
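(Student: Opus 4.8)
The plan is to verify directly that the linear map $\xi \rtimes \varphi$ is unital and multiplicative, since on underlying vector spaces it is simply the tensor product $\xi \otimes \varphi \colon R \otimes H \to S \otimes K$ and hence already $\ku$-linear. First I would dispose of the unit: the identity of $R \rtimes H$ is $1_R 1_H$, and $(\xi \rtimes \varphi)(1_R 1_H) = \xi(1_R)\varphi(1_H) = 1_S 1_K$, which is the identity of $S \rtimes K$, because $\xi$ and $\varphi$ are algebra maps and therefore unital.

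The heart of the argument is multiplicativity, which I would check on elements $xh$ and $yt$ with $x,y \in R$ and $h,t \in H$. Expanding the product in $R \rtimes H$ gives $(xh)(yt) = x (h\_{1} \cdot y)\, h\_{2} t$, so applying $\xi \rtimes \varphi$ and using that both $\xi$ and $\varphi$ are algebra maps yields $\xi(x)\, \xi(h\_{1} \cdot y)\, \varphi(h\_{2})\, \varphi(t)$. At this point I would invoke the equivariance hypothesis $\xi(h \cdot x) = \varphi(h) \cdot \xi(x)$ to rewrite $\xi(h\_{1} \cdot y) = \varphi(h\_{1}) \cdot \xi(y)$, obtaining $\xi(x)\, (\varphi(h\_{1}) \cdot \xi(y))\, \varphi(h\_{2})\, \varphi(t)$. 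On the other hand, computing the product of the images in $S \rtimes K$ gives $(\xi(x)\varphi(h))(\xi(y)\varphi(t)) = \xi(x)\, (\varphi(h)\_{1} \cdot \xi(y))\, \varphi(h)\_{2}\, \varphi(t)$. The two expressions coincide precisely because $\varphi$ is a morphism of coalgebras, so that $\varphi(h)\_{1} \otimes \varphi(h)\_{2} = \varphi(h\_{1}) \otimes \varphi(h\_{2})$; this is the only place where the compatibility of $\varphi$ with the comultiplication enters.

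There is no serious obstacle here, as the result is essentially a bookkeeping computation. The single point that requires care is the correct propagation of the Sweedler indices through $\varphi$: one must match the comultiplication of $H$ used to expand $(xh)(yt)$ with the comultiplication of $K$ used to expand $(\xi(x)\varphi(h))(\xi(y)\varphi(t))$. Once the identity $\varphi(h)\_{1} \otimes \varphi(h)\_{2} = \varphi(h\_{1}) \otimes \varphi(h\_{2})$ is recorded, the equality of the two sides—and hence the multiplicativity of $\xi \rtimes \varphi$—is immediate.
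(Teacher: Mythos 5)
Your proof is correct and is precisely the routine verification the paper alludes to when it calls the result ``straightforward'' and omits the argument: direct expansion of both products, the equivariance hypothesis to rewrite $\xi(h\_{1}\cdot y)$, and the fact that $\varphi$, being a Hopf algebra morphism, is in particular a coalgebra morphism, so $\varphi(h)\_{1}\otimes\varphi(h)\_{2}=\varphi(h\_{1})\otimes\varphi(h\_{2})$. You correctly identified that this last compatibility is the only nontrivial ingredient, so there is nothing to add.
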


\vspace{1ex}

An  $H$-module algebra $R$ is \emph{augmented} if the algebra $R$ has an augmentation
$\epsilon_{R}$ such that $\epsilon_{R}(h \cdot x) = \epsilon_H(h)\epsilon_{R}( x)$,
for $h \in H, r \in R$.

\begin{example}\label{exa:trivial-action}
Recall that the action of $H$ on a module $M$ is trivial if $h \cdot x = \epsilon(h)x$ for all
$h\in H$ and $x \in M$. An (augmented) algebra $R$ becomes 
an (augmented) $H$-module algebra with the trivial action; any  
subalgebra of $R$ is an (augmented) $H$-module subalgebra.
\end{example}

\medbreak
Now let $R$ be a Hopf algebra in the category $\yd{H}$ of Yetter-Drinfeld modules over $H$.  Then the {\em bosonization}
$R \# H$ of $R$ by $H$ 
(aka Radford biproduct) is the vector space $R \otimes H$ with the smash product
multiplication and the smash product comultiplication; see 
\cite{radford-book}*{Chapter 11} for details. 

\begin{remark}
The bosonization $R \# H$
is a Hopf algebra, which contains $R$ as a subalgebra and $H$ as a Hopf subalgebra, provided with Hopf algebra maps 
$R \#H  \underset{\iota}{\overset{\pi}{\rightleftarrows}}H$ 
satisfying $\pi \iota = \id_H$. Thus, when $R$ and $H$ are both finite-dimensional, our results in this paper on the fgc property of $R \# H$ contribute to the Etingof--Ostrik conjecture.

Conversely, given a Hopf algebra $A$ and 
Hopf algebra maps $A \underset{\iota}{\overset{\pi}{\rightleftarrows}}H$ 
with $\pi \iota = \id_H$, it turns out that  $R = A^{\operatorname{co} \, \pi}$ is a  Hopf algebra in $\yd{H}$ and $A \cong R \# H$ as Hopf algebras.
\end{remark}

\begin{remark}
We use the symbol $\rtimes$ for general smash product and reserve the symbol $\#$ for the bosonization.
\end{remark}

\vspace{1ex}

\subsection{Extensions}
\label{Extensions}
Extensions of Hopf algebras were discussed by various authors in
various contexts. We follow the expositions in \cites{andrus-devoto,schneiderext}.
\begin{definition}
\label{ExactSequenceDef}
A sequence of morphisms of Hopf algebras
\begin{align*}
\xymatrix@C-10pt{\ku\ar  @{->}[r]^{} & W \ar  @{->}[r]^{\jmath }
&  H  \ar  @{->}[r]^{\wp } & K \ar  @{->}[r]^{}&  \ku },
\end{align*}
is \emph{exact}, in which case we say that $H$ is an {\em extension} of $K$ by $W$,
if 

\begin{enumerate}[leftmargin=7ex,label=\rm{(\alph*)}]
\item\label{item:suc-exacta-1} $\jmath$ is injective, and we identify $W$ with $\jmath(W)$,

\medbreak
\item\label{item:suc-exacta-2} 
$\wp$ is surjective,

\medbreak
\item\label{item:suc-exacta-3} 
$\ker \wp = H\, W^+$,
for $W^+=\ker \epsilon_W$
and

\medbreak
\item\label{item:suc-exacta-4} $W = H^{\operatorname{co} \wp}$.
\end{enumerate}
\end{definition}

\vspace{1ex}

We often abbreviate
$W \overset{\jmath}{\hookrightarrow} H \overset{\wp}{\twoheadrightarrow} K$
for an exact sequence $\xymatrix@C-10pt{
\ku\ar  @{->}[r]^{} & W \ar  @{->}[r]^{\jmath }&  H \ar  @{->}[r]^{\wp } 
& K \ar  @{->}[r]^{}&  \ku}$.

\begin{remark}\label{rem:extension-fflat}
Let $W \xhookrightarrow[]{\jmath} H$ be an injective morphism of Hopf algebras
such that $H$ is a faithfully flat $W$-module via $\jmath$ and 
$\jmath(W)$ is stable by the left adjoint action of $H$.
Define $K$ by \ref{item:suc-exacta-3}, i.e., $K = H/ H\jmath(W)^+$ and let $H\overset{\wp}{\twoheadrightarrow} K$
be the natural projection.
Then \ref{item:suc-exacta-4} holds, 
 we have an exact sequence $W \overset{\jmath}{\hookrightarrow} H \overset{\wp}{\twoheadrightarrow} K$, and $H$ is a faithfully coflat $K$-comodule via $\wp$,
see \cite{andrus-devoto}*{Corollaries 1.2.5 and 1.2.14} and \cite{schneiderext}.
\end{remark}

\begin{remark}\label{rem:extension-fcoflat}
Let  $H\overset{\wp}{\twoheadrightarrow} K$
be a surjective morphism of Hopf algebras
such that $H$ is a faithfully coflat $K$-comodule via $\wp$.
Define $W$ by \ref{item:suc-exacta-4}, i.e., $W = H^{\operatorname{co} \wp}$ and let 
 $W \xhookrightarrow[]{\jmath} H$ be the natural inclusion, $\jmath(W)$ is stable by the left adjoint action of $H$.
Then \ref{item:suc-exacta-3} holds, 
and we have an exact sequence $W \overset{\jmath}{\hookrightarrow} H \overset{\wp}{\twoheadrightarrow} K$,
see \cite{andrus-devoto}*{Corollary 1.2.5 and 1.2.14} and \cite{schneiderext}.
\end{remark}

\vspace{1ex}

The following result is useful to verify  faithful flatness over Hopf subalgebras.

\begin{remark}
Let $H$ be a Hopf algebra with bijective antipode and $W \subset H$ a (not necessarily normal) Hopf subalgebra of $H$. Then the following conditions are equivalent, cf. \cite{schneider-normal}*{Corollary 1.8}:
\begin{enumerate}[leftmargin=7ex]
\item\label{item:fflat-1} $H$ is faithfully flat as a left $W$-module.

\medbreak
\item\label{item:fflat-2} $H$ is projective as a left $W$-module.

\medbreak
\item\label{item:fflat-3} $H$ is a flat left $W$-module 
and $W$ is a direct summand of $H$, as a left $W$-module.

\medbreak
\item\label{item:fflat-4} Any of the conditions above with right instead of left.
\end{enumerate}
\end{remark}

\vspace{1ex}

\subsection{Finite generation and Noetherianity} \label{subsec:fgc}
Here we remind a reformulation of the fgc property.
We start by a well-known observation. If $A = \bigoplus_{n \in \N_0} A^n$ is a graded algebra,
then we set $A^{\text{even}} \coloneqq \bigoplus_{n \in \N_0} A^{2n}$. We say that $A$ is \emph{graded-commutative} if $xy = (-1)^{nm}yx$ whenever $x \in A^n$, $y \in A^m$; graded-centrality is defined accordingly. 

\begin{lemma}\label{lema:graded-Noetherian}
Let $A = \bigoplus_{n \in \N_0} A^n$ be a graded algebra such that the subalgebra $A^0$ is commutative and  Noetherian. 
\begin{enumerate}[leftmargin=7ex,label=\rm{(\roman*)}]
\item\label{item:A-Noeth-affine} If $A$ is graded-commutative, then the following are equivalent:

\smallbreak
\begin{enumerate}[label=\rm{(\alph*)}]
\item $A$ is Noetherian,

\smallbreak
\item $A$ is $A^0$-affine,

\smallbreak
\item $A$ is a finite $A^{\emph{even}}$-module, and $A^{\emph{even}}$ is $A^0$-affine.   
\end{enumerate}

\medbreak
\item Let $Z$ be a graded-central subalgebra of $A$ with
$Z^0= A^0$. If $A$ is Noetherian and a finite $Z$-module, then $Z$ is $Z^0$-affine and Noetherian.
\item \label{item:A-f.g.}If $A$ is left Noetherian, then $A$ is $A^0$-affine.
\end{enumerate}
\end{lemma}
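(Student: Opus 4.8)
The plan is to treat the three parts in the order \ref{item:A-f.g.}, \ref{item:A-Noeth-affine}, then (ii), since each later part feeds on the earlier ones. Throughout, the two workhorses will be the Hilbert Basis Theorem and the Artin--Tate Lemma in the form: if $R$ is a commutative Noetherian ring lying in the (graded) center of a ring $T$, if $T$ is finitely generated as an $R$-algebra, and if $T$ is finitely generated as a left module over a subalgebra $S \supseteq R$, then $S$ is a finitely generated $R$-algebra.

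For \ref{item:A-f.g.} I would not use graded-commutativity at all; only the left Noetherian hypothesis on $A$ is needed. Since $A$ is left Noetherian, the homogeneous ideal $A^{+} = \bigoplus_{n \geq 1} A^n$ is finitely generated as a left ideal, hence admits homogeneous generators $a_1, \dots, a_r$ of positive degrees $d_1, \dots, d_r$. An induction on $n$ then shows that $A^n$ lies in the $A^0$-subalgebra generated by the $a_i$: any $x \in A^n$ with $n \geq 1$ can be written $x = \sum_i b_i a_i$ with each $b_i \in A^{\,n - d_i}$ homogeneous of strictly smaller degree, to which the inductive hypothesis applies. Hence $A$ is $A^0$-affine.

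For \ref{item:A-Noeth-affine}, the key structural facts are that $A^{\text{even}}$ is commutative and, more importantly, \emph{central} in $A$ (an element of even degree graded-commutes with everything). I would prove the cycle (a)$\Rightarrow$(b)$\Rightarrow$(c)$\Rightarrow$(a). The implication (a)$\Rightarrow$(b) is immediate from \ref{item:A-f.g.}. For (b)$\Rightarrow$(c), choose homogeneous algebra generators of $A$ over $A^0$; each odd generator $y$ satisfies $y^2 \in A^{\text{even}}$, so $A$ is module-finite over $A^{\text{even}}$, being spanned over it by the finitely many squarefree products of the odd generators. Artin--Tate, applied with $R = A^0$, $S = A^{\text{even}}$, $T = A$ (legitimate because $A^{\text{even}}$ is central and $A$ is $A^0$-affine by (b)), then yields that $A^{\text{even}}$ is $A^0$-affine. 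For (c)$\Rightarrow$(a), the commutative ring $A^{\text{even}}$ is affine over the Noetherian ring $A^0$, hence Noetherian by Hilbert; and since $A$ is module-finite over the central Noetherian subring $A^{\text{even}}$, it is itself Noetherian.

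For (ii), I would first invoke \ref{item:A-f.g.} to get that $A$ is $A^0$-affine, and then run the Artin--Tate argument with $S = Z$. The subtle point is that $A$ need not be graded-commutative here, but $Z$ is graded-central, which is exactly what is needed to carry out the reorderings in the Artin--Tate proof, since moving a homogeneous algebra generator of $A$ past a homogeneous element of $Z$ only introduces a sign. Taking homogeneous generators, one produces a finitely generated $A^0$-subalgebra $S_0 \subseteq Z$ over which $A$ is module-finite; as $S_0$ is graded-commutative and $A^0$-affine, it is Noetherian by part \ref{item:A-Noeth-affine}, so the $S_0$-submodule $Z \subseteq A$ is a finitely generated $S_0$-module, whence $Z$ is $Z^0$-affine. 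Finally $Z$ is Noetherian by part \ref{item:A-Noeth-affine} applied to $Z$ itself. I expect this descent in (ii)---extracting finite generation and Noetherianity of the \emph{subalgebra} $Z$ from those of the ambient ring $A$---to be the main obstacle, since subrings of Noetherian rings need not be Noetherian in general; the graded-centrality hypothesis together with the Artin--Tate and Eakin--Nagata circle of ideas are precisely what make the argument go through.
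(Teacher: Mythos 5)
Your proposal is correct, and it is essentially the argument the paper relies on: the paper gives no inline proof but defers to \cite{NWW2021}*{Proposition 2.4} for (i)--(ii) and to \cite{atiyah-macdonald}*{Proposition 10.7} (or \cite{jia-zhang}) for (iii), and those sources run exactly your chain of reasoning --- the graded induction on the finitely generated ideal $A^+$ for (iii), and the Artin--Tate/Hilbert-basis circle (with centrality of $A^{\mathrm{even}}$, resp.\ graded-centrality of $Z$, making the reorderings legitimate) for (i) and (ii). Your write-up correctly identifies the two points where graded hypotheses are genuinely used: that $A^{\mathrm{even}}$ is central and $A$ is spanned over it by squarefree products of odd generators, and that graded-centrality of $Z$ is what lets the Artin--Tate intermediate algebra $S_0$ be formed and lets part (i) apply to it and to $Z$.
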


\pf For (i) and (ii), see arguments and references in \cite{NWW2021}*{Proposition 2.4}. For (iii), see e.g., \cite{atiyah-macdonald}*{Proposition 10.7} or \cite{jia-zhang}*{Theorem 1.2}. 
\epf

The previous lemma supports the following reformulation of fgc stated in \cite{NWW2021}, taking into account the well-known fact that the cohomology ring $\coh(H, \ku)$ of any finite-dimensional Hopf algebra $H$ is graded-commutative, 
see e.g., \cite{suarez-alvarez}.

\begin{lemma}\label{lem:fgc=hfg} \cite{NWW2021}*{Proposition 2.9} A finite-dimensional Hopf algebra 
$H$ has fgc if and only if $\coh(H, M)$ is a Noetherian $\coh(H,\ku)$-module 
for any finite-dimensional $H$-module $M$. \qed
\end{lemma}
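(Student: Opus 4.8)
The plan is to reduce the asserted equivalence to two elementary dictionaries, after which the whole statement becomes bookkeeping. The first dictionary is that over a Noetherian ring a module is Noetherian exactly when it is finitely generated; the second is that for a graded-commutative algebra $A$ with $A^0$ a field, being Noetherian coincides with being affine, which is precisely the content of Lemma \ref{lema:graded-Noetherian}. I would open by recording two observations that let these tools engage. Writing $R = \coh(H,\ku)$, this ring is graded-commutative, and its degree-zero part is $R^0 = \Ext^0_H(\ku,\ku) = \Hom_H(\ku,\ku) = \ku$, so the hypotheses of Lemma \ref{lema:graded-Noetherian} hold with $A^0 = \ku$. Moreover, since $H$ is finite-dimensional, the finitely generated $H$-modules are exactly the finite-dimensional ones; hence the module $M$ ranges over the same class in the definition of fgc and in the statement to be proved, and no discrepancy arises from the quantifier.

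For the forward implication I would assume \ref{item:fgca} and \ref{item:fgcb}. From \ref{item:fgca}, $R$ is $\ku$-affine; since $R$ is graded-commutative with $R^0 = \ku$, Lemma \ref{lema:graded-Noetherian}\ref{item:A-Noeth-affine} promotes affineness to Noetherianity, so $R$ is a Noetherian ring. Fixing a finite-dimensional $M$, condition \ref{item:fgcb} says $\coh(H,M)$ is a finitely generated $R$-module, and a finitely generated module over a Noetherian ring is Noetherian. This yields the right-hand condition.

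For the converse I would assume $\coh(H,M)$ is a Noetherian $R$-module for every finite-dimensional $M$. Specializing to $M = \ku$ shows that $R$ is Noetherian as a module over itself, i.e.\ $R$ is a Noetherian ring; Lemma \ref{lema:graded-Noetherian}\ref{item:A-f.g.} then gives that $R$ is $\ku$-affine, which is \ref{item:fgca}. For \ref{item:fgcb}, I would use that a Noetherian module is in particular finitely generated over its ground ring (the whole module is one of its own submodules), so for each finite-dimensional $M$ the hypothesis yields that $\coh(H,M)$ is a finitely generated $R$-module, which is \ref{item:fgcb}.

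I do not expect a serious obstacle, as the statement is fundamentally a translation between the Noetherian and finite-generation vocabularies. The only genuinely nontrivial step is the passage from ``$R$ is $\ku$-affine'' to ``$R$ is Noetherian'': this fails for general noncommutative finitely generated algebras and relies crucially on the graded-commutativity of the cohomology ring. That input is exactly what Lemma \ref{lema:graded-Noetherian} packages, via a Hilbert-basis and Artin--Tate type argument reducing $R$ to a finite module over the commutative affine subalgebra $R^{\mathrm{even}}$, so in the proof it amounts to citing that lemma with $A^0 = \ku$.
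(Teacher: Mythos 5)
Your proposal is correct and takes essentially the same approach as the paper: the paper proves this lemma simply by citing \cite{NWW2021}*{Proposition 2.9}, noting in the surrounding text that the reformulation rests on Lemma \ref{lema:graded-Noetherian} and the graded-commutativity of $\coh(H,\ku)$, and your argument is exactly the expected unwinding of those ingredients (parts \ref{item:A-Noeth-affine} and \ref{item:A-f.g.} of that lemma together with the standard Noetherian/finitely-generated dictionary over a Noetherian ring). Your observation that finitely generated and finite-dimensional $H$-modules coincide when $H$ is finite-dimensional correctly disposes of the only quantifier discrepancy between the definition of fgc and the statement being proved.
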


Finer characterizations of the fgc property will be discussed in Subsection \ref{subsec:alternatives-fgc}.

\vspace{1ex}

\subsection{Bosonizations with semisimple Hopf algebras}\label{subsec:bosonizations-fgc}
Here we recall some positive results of a particular case of Question \ref{question:bosonization}. 
Let $K$ be a semisimple Hopf algebra and $R$ a finite-dimensional Hopf algebra in $\yd{K}$ that has fgc. 
 It  seems to be  open whether $R\# K$ has fgc
except in the  cases considered in the next theorem.
Let $G$ be a finite group. 
For $K = \ku G$, this is 
\cite{aapw}*{Theorem 3.1.6} assuming  $\car \ku =0$
(but it is only needed that   $\ku G$ is semisimple);
as observed in \cite{andruskiewitsch-natale}, the same proof applies for
$K = \ku^G$.

\begin{theorem} \label{th:RtoRsmashH} 
Let $G$ be a finite group. Let $K$ be either $\ku G$ assumed semisimple, or else $\ku^G$.
If $R$ is a finite-dimensional Hopf algebra in $\yd{K}$ that has fgc, 
then the bosonization $R\# K$ has fgc.
\end{theorem}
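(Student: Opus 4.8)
The plan is to reduce the computation of $\coh(R\# K,-)$ to the cohomology of $R$ together with an averaging over $K$, and then to invoke invariant theory for the semisimple Hopf algebra $K$. First I would produce the Lyndon--Hochschild--Serre type spectral sequence attached to the normal coideal subalgebra $R \hookrightarrow R\# K$ with quotient $K \cong (R\# K)/(R\# K)R^+$: for every finite-dimensional $R\# K$-module $M$,
$$E_2^{p,q} = \Ext^p_K\bigl(\ku, \Ext^q_R(\ku, M)\bigr) \Longrightarrow \Ext^{p+q}_{R\# K}(\ku, M),$$
where the $K$-module structure on $\Ext^q_R(\ku,M)$ comes from the $K$-action on the $K$-module algebra $R$ and on $M$. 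Since $K$ is semisimple, every $K$-module is projective, so $\Ext^p_K(\ku, -) = 0$ for $p > 0$ and $\Ext^0_K(\ku, N) = N^K$. Hence the sequence collapses onto the row $p = 0$ and yields isomorphisms, compatible with the ring and module structures,
$$\coh(R\# K, \ku) \cong \coh(R, \ku)^K, \qquad \coh(R\# K, M) \cong \coh(R, M)^K.$$

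Next I would feed in the hypothesis that $R$ has fgc. Writing $S = \coh(R, \ku)$, this says that $S$ is an affine graded-commutative, hence Noetherian, $\ku$-algebra with $S^0 = \ku$, and that $V \coloneqq \coh(R, M)$ is a finitely generated, hence Noetherian, $S$-module; moreover $K$ acts on $S$ by graded algebra automorphisms and on $V$ compatibly, so that $S^K$ is a graded-central subalgebra with $(S^K)^0 = \ku = S^0$ and $V^K$ is an $S^K$-submodule of $V$. The crux is to show that $S$ is a finite module over $S^K$. Granting this, Lemma \ref{lema:graded-Noetherian}(ii) shows that $S^K$ is $\ku$-affine and Noetherian; since $V$ is then a finitely generated module over the finite extension $S^K \subseteq S$, it is Noetherian over $S^K$, whence the $S^K$-submodule $V^K$ is a finitely generated $S^K$-module. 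Transporting this back through the collapse isomorphisms, $\coh(R\# K, M)$ is a Noetherian $\coh(R\# K, \ku)$-module for every finite-dimensional $M$, and therefore $R\# K$ has fgc by Lemma \ref{lem:fgc=hfg}.

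I expect the main obstacle to be precisely the finiteness of $S$ over $S^K$, the Noether-type finiteness theorem for the semisimple Hopf algebra $K$; this is exactly the point at which semisimplicity, rather than $\car \ku = 0$, is what one uses. For $K = \ku G$ it follows from the classical argument that every $s \in S$ is integral over $S^G$, being a root of $\prod_{g \in G}(X - g\cdot s)$ whose coefficients are $G$-invariant, so that the affine algebra $S$ is module-finite over $S^G$; for $K = \ku^G$ the action is a $G$-grading, and a homogeneous element of degree $g$ satisfies $s^{\ord(g)} \in S^K$, giving the same integrality. A secondary point requiring care is the construction of the spectral sequence together with the verification that its edge homomorphism is the restriction $\coh(R\# K,-) \to \coh(R,-)$, realizing $\coh(R\# K,-)$ as the $K$-invariants as an algebra and as modules; here the splitting $\pi\iota = \id_K$ and the freeness of $R\# K$ over $R$ make the averaging by the integral of $K$ well behaved.
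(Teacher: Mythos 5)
Your first step (the collapse of the Lyndon--Hochschild--Serre spectral sequence onto $\coh(R\#K,-)\cong\coh(R,-)^K$) matches the paper's Lemma \ref{lemma:SV}, but after that there is a genuine gap, and it sits exactly at the delicate point of this theorem. You assert that $S=\coh(R,\ku)$ is \emph{graded-commutative}, ``hence Noetherian'' since it is affine, and you later use honest commutativity again in the Noether integrality argument (the coefficients of $\prod_{g\in G}(X-g\cdot s)$ are $G$-invariant only when the elements $g\cdot s$ commute). Neither is available here: $R$ is a Hopf algebra in $\yd{K}$, i.e.\ a \emph{braided} Hopf algebra, not an ordinary one, so the Suarez-Alvarez graded-commutativity theorem does not apply to it. What is true (Lemma \ref{lemma:braided-commutative}, from MPSW) is only that $\coh(R,\ku)$ is \emph{braided} commutative in $\yd{K}$, and braided commutative affine algebras need not be graded-commutative --- for quantum linear spaces, for instance, the cohomology ring is a skew polynomial ring $\xi_i\xi_j=q\,\xi_j\xi_i$ with nontrivial coefficients $q$. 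Since finite generation does not imply Noetherianity for noncommutative algebras, your ``affine, hence Noetherian'' step fails, and with it the integrality argument and the appeal to Lemma \ref{lema:graded-Noetherian}(ii).

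This is precisely why the theorem is stated only for $K=\ku G$ and $K=\ku^G$ rather than for arbitrary semisimple $K$: the paper fills the gap with Lemma \ref{lemma:braidedcommut-Noetherian} (proved in \emph{aapw} for $\ku G$ and in \emph{andruskiewitsch-natale} for $\ku^G$), which says that finitely generated braided commutative algebras in $\yd{\ku G}$ or $\yd{\ku^G}$ are Noetherian; whether this holds for general semisimple $K$ is the open Question recorded immediately after the theorem, so a proof along your lines that never uses the specific form of $K$ should be a red flag. Note also that the paper then avoids your ``crux'' entirely: instead of proving $S$ is module-finite over $S^K$, it applies the Hilbert/Reynolds-operator argument (Lemma \ref{lemma:hilbert-invariants}) to the Noetherian algebra $S$ and the finitely generated $(S\rtimes K)$-module $V=\coh(R,M)$, obtaining directly that $S^K$ is affine and $V^K$ is a finitely generated $S^K$-module. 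So the correct repair of your proposal is: replace ``graded-commutative hence Noetherian'' by braided commutativity (Lemma \ref{lemma:braided-commutative}) plus Lemma \ref{lemma:braidedcommut-Noetherian}, and replace the integrality step by Lemma \ref{lemma:hilbert-invariants}.
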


To trace the necessity of the hypothesis, we recall the lemmas that lead to the proof.
The first step is the following result.

\begin{lemma} \label{lemma:SV}
Let $K$ be a semisimple Hopf algebra. Let $R$ be an augmented finite-dimensional $K$-module algebra 
and
let $M$ be an $(R \rtimes K)$-module.
Then  
\begin{align*}
\coh(R \rtimes K, \ku) &\cong \coh(R, \ku)^K, & 
\coh(R\rtimes K,M) &\cong  \coh (R,M)^K,
\end{align*} 
and the action of 
$\coh(R \rtimes K , \ku)$ on $\coh(R\rtimes K,M)$ is induced
by that of $\coh(R, \ku)$ on $\coh(R,M)$. 
\end{lemma}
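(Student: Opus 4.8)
The plan is to compute $\coh(R,M)$ and $\coh(R\rtimes K,M)$ from one and the same $K$-equivariant resolution, and then pass to $K$-invariants. First I would take the normalized bar resolution $P_\bullet \twoheadrightarrow \ku$ of the trivial $R$-module, with $P_n = R \ot \overline{R}^{\ot n}$ and $\overline{R} = R/\ku 1$. Since $R$ is a $K$-module algebra and $\epsilon_R$ is $K$-linear, the line $\ku 1$ is $K$-stable, so each $\overline R^{\ot n}$ is a $K$-module and, with $K$ acting diagonally on $R\ot\overline R^{\ot n}$, each $P_n$ becomes an $(R\rtimes K)$-module with $K$-linear bar differential. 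Applying $\Hom_R(-,M)$ gives the cochain complex $C^\bullet=\Hom_\ku(\overline R^{\ot \bullet},M)$ computing $\coh(R,M)$; I would equip it with the conjugation action $(h\cdot f)(x)=h\_{1}\cdot f(S(h\_{2})\cdot x)$, for which the Hochschild differential and the cup product are maps of $K$-modules. This is exactly the $K$-module, and for $M=\ku$ the $K$-algebra, structure on $\coh(R,-)$ appearing in the statement.

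The crux is to recognize that the very same complex, read $(R\rtimes K)$-linearly, computes $\coh(R\rtimes K,M)$. Writing $V_n=\overline R^{\ot n}$, the assignment $r\ot 1\ot v \mapsto r\ot v$ exhibits $P_n\cong (R\rtimes K)\otimes_K V_n$ as the module induced from the $K$-module $V_n$ along $K\hookrightarrow R\rtimes K$. Because $K$ is semisimple, $V_n$ is a projective $K$-module, and since $R\rtimes K$ is free as a right $K$-module, the induced module $P_n$ is projective over $R\rtimes K$. Thus $P_\bullet\twoheadrightarrow\ku$ is a projective $(R\rtimes K)$-resolution, and the induction--restriction adjunction gives
\[
\Hom_{R\rtimes K}(P_\bullet,M)\cong \Hom_K(V_\bullet,M)=\big(\Hom_\ku(V_\bullet,M)\big)^K=(C^\bullet)^K,
\]
so that $(C^\bullet)^K$ computes $\coh(R\rtimes K,M)$.

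It then remains to interchange invariants and cohomology. Fixing a two-sided integral $\Lambda\in K$ with $\epsilon_K(\Lambda)=1$, the operator $v\mapsto \Lambda\cdot v$ is a natural projection onto $K$-invariants, so $(-)^K$ is exact; hence
\[
\coh(R\rtimes K,M)=\coh\big((C^\bullet)^K\big)\cong \big(\coh(C^\bullet)\big)^K=\coh(R,M)^K .
\]
Specializing to $M=\ku$ yields the algebra isomorphism $\coh(R\rtimes K,\ku)\cong\coh(R,\ku)^K$, and since the whole identification is induced by the $K$-equivariant cup product at the cochain level, the $\coh(R\rtimes K,\ku)$-module structure on $\coh(R\rtimes K,M)$ is carried onto the restriction to invariants of the $\coh(R,\ku)$-module structure on $\coh(R,M)$, as asserted.

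I expect the main obstacle to be the middle step: arranging a single resolution that is simultaneously $R$-free, so as to see $\coh(R,-)$, and $(R\rtimes K)$-projective, so as to see $\coh(R\rtimes K,-)$. This is precisely where semisimplicity of $K$ is essential, both to make $P_n$ projective over $R\rtimes K$ and to make $(-)^K$ exact, and it is also where the non-cocommutative case must be handled with care: the naive attempt to place a $K$-module structure directly on $\coh(R,M)$ through the invariants $M^R$ founders on straightening $rh$ into ``$K$ before $R$'' inside $R\rtimes K$, which invokes the antipode and does not simplify without cocommutativity. Routing everything through the induced modules $(R\rtimes K)\otimes_K V_n$ and the conjugation action sidesteps this difficulty entirely.
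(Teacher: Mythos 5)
Your proof is correct, but it takes a different route from the paper. The paper disposes of this lemma in one line, by citing the Lyndon--Hochschild--Serre spectral sequence (St\u{e}fan--Vay, Theorem 2.17, or Evens for group algebras): semisimplicity of $K$ kills all higher $K$-cohomology, so the sequence $E_2^{p,q}=\coh^p(K,\coh^q(R,M))\Rightarrow \coh^{p+q}(R\rtimes K,M)$ collapses onto the invariants column. You instead give a self-contained, resolution-level proof: the normalized bar resolution $P_\bullet$ of $\ku$ over $R$, made $K$-equivariant, is simultaneously $R$-free and $(R\rtimes K)$-projective (since $P_n\cong (R\rtimes K)\ot_K \overline{R}^{\ot n}$ is induced from a projective $K$-module), so $\Hom_{R\rtimes K}(P_\bullet,M)=(C^\bullet)^K$ inside $C^\bullet=\Hom_R(P_\bullet,M)$, and the integral $\Lambda$ makes $(-)^K$ exact, letting invariants pass through cohomology. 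In effect you reprove the collapse of the spectral sequence directly, exploiting semisimplicity twice; this buys transparency and makes visible exactly where semisimplicity enters, at the cost of redoing machinery the paper outsources, while the citation route buys brevity and a tool that exists without any semisimplicity hypothesis. One imprecision worth fixing: for non-cocommutative $K$ the cup product on $C^\bullet$ is \emph{not} a map of $K$-modules under the conjugation action --- since the antipode is an anti-coalgebra map, one gets $h\cdot(f\cup g)=(h_{(2)}\cdot f)\cup (h_{(1)}\cdot g)$, with the comultiplication factors flipped. This does not damage your argument, because all you actually use is weaker and true: the product of two invariant cochains is invariant (apply $\epsilon$ to each factor), and the standard chain-level lift of an $R\rtimes K$-linear cocycle is again $R\rtimes K$-linear, so the identification $\Hom_{R\rtimes K}(P_\bullet,-)=(C^\bullet)^K$ is compatible with Yoneda products and hence with the module structure claimed in the lemma; you should state it in this form rather than asserting equivariance of the cup product itself.
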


\begin{proof}
This follows using the Lyndon-Hochschild-Serre spectral sequence, see
for example~\cite{stefan-vay}*{Theorem 2.17} or \cite{evens}*{Sections 7.2 and 7.3}
for the case when $K$ is a group algebra. 
\end{proof}

We will combine the previous lemma with 
the next result, a variation of the classical argument by Hilbert, cf. \cite{aapw}*{Lemma 3.1.1}.

\begin{lemma}\label{lemma:hilbert-invariants}
Let $K$ be a semisimple Hopf algebra. 
Let $A = \bigoplus_{n\in \N_0} A^n$ be a connected 
graded $K$-module algebra that 
is  (right) Noetherian.
Let $M$ be a finitely generated $(A\rtimes K)$-module.
Then $A^K$ is finitely generated and $M^K$ is a finitely generated
$A^K$-module. \qed
\end{lemma}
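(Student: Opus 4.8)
The plan is to deploy the classical Hilbert argument through the transfer (Reynolds) operator furnished by semisimplicity. Since $K$ is semisimple, there is a two-sided normalized integral $\Lambda\in K$ with $\epsilon_K(\Lambda)=1$, so that $h\Lambda=\Lambda h=\epsilon_K(h)\Lambda$ for all $h\in K$. First I would introduce the Reynolds operator $\Rc\colon A\to A$, $\Rc(a)=\Lambda\cdot a$, and record its two key features. It is a graded projection onto $A^K$: the computation $h\cdot\Rc(a)=(h\Lambda)\cdot a=\epsilon_K(h)\Rc(a)$ shows $\Rc(a)\in A^K$, while $\Rc|_{A^K}=\id$ because $\epsilon_K(\Lambda)=1$, and $\Rc$ preserves degrees since the action does. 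More importantly, $\Rc$ is a homomorphism of $A^K$-bimodules; this is the heart of the method and follows from the module-algebra axiom: for $x\in A^K$ one has $\Lambda\_{1}\cdot x=\epsilon_K(\Lambda\_{1})x$, whence $\Rc(xa)=(\Lambda\_{1}\cdot x)(\Lambda\_{2}\cdot a)=x\,\Rc(a)$, and symmetrically $\Rc(ax)=\Rc(a)\,x$.

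To prove that $B\coloneqq A^K$ is finitely generated, I would imitate Hilbert's ideal argument. Writing $B^+=\bigoplus_{n\ge1}(B\cap A^n)$, consider the right ideal $B^+A$ of $A$. As $A$ is right Noetherian, $B^+A$ is finitely generated, and since it is generated as a right ideal by the set $B^+$, a finite subset suffices: there are homogeneous $f_1,\dots,f_r\in B^+$ with $B^+A=f_1A+\cdots+f_rA$. Then $B=\ku[f_1,\dots,f_r]$ (the subalgebra they generate), by induction on degree anchored at $B^0=\ku$, where connectivity is used. Indeed, a homogeneous $b\in B^n$ with $n\ge1$ lies in $B^+A$, so $b=\sum_i f_ia_i$ with $a_i\in A$ homogeneous of degree $n-\deg f_i<n$; applying $\Rc$ and using its left $B$-linearity gives $b=\Rc(b)=\sum_i f_i\,\Rc(a_i)$ with $\Rc(a_i)\in B$ of strictly smaller degree, hence in $\ku[f_1,\dots,f_r]$ by the inductive hypothesis.

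For the module statement I would first pass to the Noetherian $A$-module underlying $M$. The smash product $A\rtimes K$ is module-finite over $A$ (free of rank $\dim_\ku K$), so a finitely generated $(A\rtimes K)$-module $M$ is finitely generated, hence Noetherian, over $A$. Now consider the $A$-submodule $M^KA$ of $M$ generated by the invariants. Being a submodule of a Noetherian module it is finitely generated, and as it is generated by the set $M^K$ a finite subset $m_1,\dots,m_s\in M^K$ already generates it. The Reynolds operator $\Rc_M(m)=m\cdot\Lambda$ of $M$ is a retraction onto $M^K$ satisfying $\Rc_M(m'\cdot a)=m'\cdot\Rc(a)$ for $m'\in M^K$ and $a\in A$, by the same module-algebra computation (using that $\Lambda$ is two-sided). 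Hence for $m\in M^K\subseteq M^KA$, writing $m=\sum_i m_ia_i$ with $a_i\in A$ and applying $\Rc_M$ yields $m=\Rc_M(m)=\sum_i m_i\,\Rc(a_i)$ with $\Rc(a_i)\in B$. This exhibits $M^K$ as the finitely generated $A^K$-module $m_1B+\cdots+m_sB$.

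The step I expect to require the most care is the compatibility of the Reynolds operator with the $A^K$-action --- the identities making $\Rc$ and $\Rc_M$ maps of $A^K$-(bi)modules --- since this is exactly where semisimplicity enters, through the two-sided normalized integral, and where the module-algebra axiom must be applied correctly. Once these are in place, the remainder is the standard Noetherian extraction of a finite generating set lying inside the invariants, with only bookkeeping needed to keep all ideals, modules, and the Noetherian hypothesis consistently on the same side.
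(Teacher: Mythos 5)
Your proposal follows the same route the paper itself takes: the paper gives no written proof of this lemma, describing it as ``a variation of the classical argument by Hilbert'' and citing \cite{aapw}*{Lemma 3.1.1}, and your Reynolds-operator argument is precisely that classical argument. The algebra half --- the normalized two-sided integral $\Lambda$ (semisimple implies unimodular, so $\Lambda$ exists), the $A^K$-bimodule projection $\Rc$, and the Noetherian induction on degree anchored at connectedness --- is correct as written.

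The one step whose justification does not hold up is the module compatibility $\Rc_M(m'\cdot a)=m'\cdot\Rc(a)$, which you assert follows ``by the same module-algebra computation.'' It does not: having chosen right modules (consistently with the right-Noetherian hypothesis), the left-hand side is $m'\cdot(a\Lambda)$, a product computed inside $A\rtimes K$, and there is no module-algebra expansion of it because $M$ is not an algebra. One must instead commute $a$ past $\Lambda$ using the smash-product relation $a\Lambda=\Lambda_{(2)}\bigl(\Ss^{-1}(\Lambda_{(1)})\cdot a\bigr)$, which together with invariance of $m'$ gives $\Rc_M(m'\cdot a)=m'\cdot\bigl(\Ss^{-1}(\Lambda)\cdot a\bigr)$. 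So you need the additional fact $\Ss^{-1}(\Lambda)=\Lambda$. This is true --- $\Ss(\Lambda)$ is again a two-sided integral with $\epsilon_K(\Ss(\Lambda))=\epsilon_K(\Lambda)=1$, so uniqueness of integrals forces $\Ss(\Lambda)=\Lambda$ --- hence your identity is correct and the gap is reparable, but the antipode genuinely enters and this step must be written out; ``$\Lambda$ is two-sided'' alone is not the reason. (A smaller instance of the same issue: freeness of $A\rtimes K$ as a \emph{right} $A$-module, which you need so that $M$ is finitely generated over $A$, also uses bijectivity of $\Ss$.) Alternatively, if you work with left modules and the left ideal $AB^{+}$, your phrase becomes literally true: there $\Rc_M(a\cdot m')=(\Lambda a)\cdot m'=(\Lambda_{(1)}\cdot a)\cdot(\Lambda_{(2)}\cdot m')=\Rc(a)\cdot m'$, with no antipode appearing anywhere.
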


 In order to apply Lemma \ref{lemma:hilbert-invariants} we need that
 $\coh(R, \ku)$ is Noetherian under the recursive hypothesis that it has fgc.
 This follows from Lemma \ref{lema:graded-Noetherian} when $\coh(R, \ku)$
 is graded-commutative (particularly when $R$ is a Hopf algebra) but
 in general just a weaker version holds.

\medbreak
Let $K$ be a 
Hopf algebra,
not necessarily semisimple.
An algebra $A$ in $\yd{K}$ is  
\emph{braided commutative} if the multiplication $m_A$ satisfies
$m_A = m_A c_{A, A}$; in such case, the invariant subring $A^K$ is central in $A$.

As in \cite{aapw}, we rephrase Corollary~3.13 of \cite{MPSW} noting that
$\coh(R, \ku)=\Ext_{R}(\ku,\ku)$ is isomorphic 
to $\hoch(R,\ku)= \Ext_{R\ot R^{\text{op}}} (R,\ku)$, see \cite{MPSW}*{Section~2.4}.

\begin{lemma} \cite{MPSW}*{Corollary 3.13} \label{lemma:braided-commutative}
Let $K$ be a (not necessarily semisimple)  Hopf algebra and let $R$ be a bialgebra in $\yd{K}$. Assume that either $K$
or $R$ is finite-dimensional. Then $\coh(R, \ku)$  is a  braided commutative graded algebra
in $\yd{K}$. \qed
\end{lemma}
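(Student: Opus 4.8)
The plan is to work internally in the braided monoidal category $\Cc = \yd{K}$, with braiding $c$, and to produce \emph{two} graded products on $\coh(R,\ku)=\Ext_R(\ku,\ku)$ whose compatibility, via a braided Eckmann--Hilton argument, forces the multiplication $m$ to satisfy $m = m\circ c$, which is precisely braided commutativity in the sense defined before the statement. First I would check that $\coh(R,\ku)$ is a graded algebra object in $\Cc$. Since $R$ is an algebra in $\Cc$ and the unit object $\ku = \uno$ is an $R$-module via $\epsilon_R$, the braided bar resolution of $\ku$ consists of free $R$-modules with differentials that are morphisms in $\Cc$; hence each $\Ext^n_R(\ku,\ku)$ is naturally an object of $\Cc$, and the Yoneda composition is a morphism in $\Cc$. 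The hypothesis that $K$ or $R$ be finite-dimensional is used here to guarantee that $\Cc$ is rigid and that this braided homological algebra is available.

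The second product comes from the coalgebra structure of $R$. The comultiplication $\Delta\colon R \to R\,\underline{\ot}\,R$ into the braided tensor product algebra is an algebra morphism in $\Cc$, and, together with a braided Künneth isomorphism $\Ext_{R\,\underline{\ot}\,R}(\ku,\ku)\cong \coh(R,\ku)\,\underline{\ot}\,\coh(R,\ku)$ available under the finiteness assumption, it induces a cup product on $\coh(R,\ku)$; the counit $\epsilon_R$ supplies its unit. I would then verify the braided Godement (middle-four) interchange law relating the Yoneda product and the cup product: in $\Cc$ the swap of the two middle tensor factors is effected by $c$ rather than by the ordinary flip, so the interchange law carries an explicit instance of the braiding.

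At this point the braided Eckmann--Hilton argument finishes the proof. Two unital graded products satisfying the interchange law must coincide, be associative, and---because the interchange here transports the braiding $c$ into the comparison of $m$ with its opposite---satisfy $m = m\circ c$. Thus $\coh(R,\ku)$ is braided commutative. Equivalently, using the isomorphism $\coh(R,\ku)\cong\hoch(R,\ku)$ recorded before the statement, one may instead run the braided analogue of Gerstenhaber's theorem for the cup product on Hochschild cohomology, with the flip in Gerstenhaber's sign bookkeeping replaced throughout by $c$.

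I expect the main obstacle to be the braiding bookkeeping. Establishing the braided Künneth isomorphism and, more delicately, verifying that the interchange law yields exactly $m = m\circ c$---rather than some other braiding-twisted identity---requires tracking each classical Koszul sign as an honest instance of $c$, inserted in the correct tensor slot and applied in the correct order. Showing that these instances assemble into precisely the braiding of $\Cc$ on $\coh(R,\ku)$ is the crux, and the finiteness hypothesis enters exactly to license the Künneth step and the rigidity of $\Cc$ on which the whole argument rests.
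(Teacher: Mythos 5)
Before anything else, note that the paper does not actually prove this lemma: it is stated with a reference and a qed-symbol because it is imported wholesale from \cite{MPSW}*{Corollary 3.13}, rephrased through the isomorphism $\coh(R,\ku)\cong\hoch(R,\ku)$ of \cite{MPSW}*{Section 2.4}. So your proposal has to be judged as a reconstruction of the cited proof, and its core strategy is the right one: manufacturing a second product on $\Ext_R(\ku,\ku)$ from the coproduct $\Delta\colon R\to R\,\underline{\ot}\,R$ and running a braided interchange (Hilton--Eckmann) argument is the braided analogue of the argument of \cite{suarez-alvarez} that this paper invokes for ordinary finite-dimensional Hopf algebras, and the bialgebra hypothesis on $R$ enters exactly there (for a general augmented algebra $\Ext_R(\ku,\ku)$ is not commutative in any sense). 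However, your write-up defers its own decisive step. Whether the interchange law yields $m=m\circ c$ --- with $c$ the Yetter--Drinfeld braiding carried by the Ext groups themselves --- rather than $m=m\circ c^{-1}$ or some conjugated variant depends on which of the two braided tensor-product algebra structures on $R\,\underline{\ot}\,R$ receives $\Delta$, and on checking that the braidings inserted into the cross-product and K\"unneth maps assemble into the braiding determined by the $K$-action and $K$-coaction on cohomology. You explicitly name this verification as the crux and leave it undone; in a non-symmetric category this is not sign bookkeeping around a known skeleton, it is the entire content of the result and the bulk of the work in the cited source. As written, the proposal is a correct plan rather than a proof.

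Beyond that, two of your supporting assertions are faulty. First, the finiteness hypothesis is not there ``to guarantee that $\Cc$ is rigid'': the category of all Yetter--Drinfeld modules is not rigid even when $K$ is finite-dimensional, and the Ext groups need not be finite-dimensional objects in any case. What finiteness actually buys is that $\Hom_R(P_n,\ku)$, for $P_\bullet$ the bar resolution, is an object of $\yd{K}$ at all: a $K$-action always exists (cf.\ \eqref{Kaction}), but the $K$-coaction requires either $R$ finite-dimensional (the Hom spaces are then duals of finite-dimensional Yetter--Drinfeld modules) or $K$ finite-dimensional (then $\yd{K}$ is the module category of the Drinfeld double $D(K)$ and the coaction is part of the $D(K)$-action). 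Without this, your very first step --- that $\coh(R,\ku)$ lives in $\Cc$ --- is unsupported. Second, your closing ``equivalently'' is not an equivalence: Gerstenhaber's commutativity theorem concerns $\hoch(A,A)$, not $\hoch(A,\ku)$. For a general augmented algebra $A$ one has $\hoch(A,\ku)\cong\Ext_A(\ku,\ku)$ as algebras, and this can be noncommutative --- for the square-zero algebra $A=\ku\langle x,y\rangle/(x,y)^2$ it is the free algebra on two dual generators --- so no braided Gerstenhaber-type argument with trivial coefficients can exist. Any proof must use $\Delta$, which means your proposed alternative route collapses back into the first one.
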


Actually, the previous result also holds under weaker hypotheses, see \cite{coppola-solotar}.

\begin{lemma}\label{lemma:braidedcommut-Noetherian} \cites{aapw,andruskiewitsch-natale}
Let $G$ be a finite  group and 
let $A$ be a braided commutative algebra either in $\yd{\ku G}$ or in $\yd{\ku^G}$. 
If $A$ is finitely generated (as an algebra), then $A$ is Noetherian. 
\end{lemma}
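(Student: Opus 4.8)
The plan is to exhibit inside $A$ a central subalgebra $B$ that is commutative, finitely generated, and over which $A$ is a finite module; the conclusion then follows from the standard fact that a ring which is module-finite over a central commutative Noetherian subring is itself left and right Noetherian (a left ideal of $A$ is in particular a $B$-submodule of the Noetherian $B$-module $A$, so ascending chains of left ideals stabilize, and symmetrically on the right; here $B$ is Noetherian by Hilbert's basis theorem). Thus everything reduces to producing $B$ and proving the module-finiteness.

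The source of central elements is the finiteness of $G$ together with braided commutativity. In $\yd{\ku G}$ the Yetter--Drinfeld braiding reads $c(x\ot y)=(g\cdot y)\ot x$ for $x$ homogeneous of $G$-degree $g$ (the degree coming from the comodule structure), so braided commutativity $m=m\,c$ gives $xy=(g\cdot y)x$. Iterating this identity yields $x^{k}y=(g^{k}\cdot y)\,x^{k}$ for all $k$ and all $y$, and taking $k=\ord(g)$ makes $g^{k}=e$ act trivially; hence $x^{\ord(g)}$ is central. The case $\yd{\ku^G}$ is entirely analogous: there the braiding gives $yx=x\,(y\cdot h)$ for $x$ of degree $h$ (now the degree is the $\ku^{G}$-module degree and $\cdot$ is the $G$-action coming from the comodule structure), whence $y\,x^{k}=x^{k}\,(y\cdot h^{k})$ and again $x^{\ord(h)}$ is central.

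With this in hand I would choose a finite set of homogeneous algebra generators and enlarge it to a finite set $X=\{x_{1},\dots,x_{m}\}$ that is stable under the relevant $G$-action (possible since $G$ is finite and acts by algebra automorphisms); $X$ remains a homogeneous generating set. For each $x_{j}$ of degree $g_{j}$ put $z_{j}:=x_{j}^{\ord(g_{j})}$; by the previous paragraph each $z_{j}$ is central, so $B:=\ku[z_{1},\dots,z_{m}]$ is a finitely generated commutative central subalgebra, hence Noetherian. Because $X$ is $G$-stable, the braiding restricts to a bijection of the finite set $X\times X$ and the braided-commutativity relations rewrite every product $x_{i}x_{j}$ as a product of two elements of $X$; consequently the ordered monomials $x_{1}^{a_{1}}\cdots x_{m}^{a_{m}}$ span $A$. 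Reducing each exponent modulo $\ord(g_{j})$ and absorbing the central powers $z_{j}$ into $B$ then shows $A=\sum_{0\le a_{j}<\ord(g_{j})}B\,x_{1}^{a_{1}}\cdots x_{m}^{a_{m}}$, a finite $B$-module, which completes the argument.

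The step I expect to be the genuine obstacle is the spanning by ordered monomials. When $G$ is abelian (the case relevant to diagonal type and to quantum linear spaces) the relations take the diagonal form $x_{i}x_{j}=q_{ij}x_{j}x_{i}$ with scalars $q_{ij}\in\kut$, reordering is literal bubble sort, and the spanning is immediate. For non-abelian $G$ the rewriting $x_{i}x_{j}=x_{j'}x_{i}$ changes the left letter, so termination is not formal; here one must invoke the PBW/normal-form theorem for the structure algebra of a finite non-degenerate bijective set-theoretic solution of the braid equation (equivalently, the ordered-monomial spanning set of the braided symmetric algebra of $(\operatorname{span}X,\,c)$), of which $A$ is a quotient. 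Granting that input, the finite-module conclusion, and hence Noetherianity of $A$, follow as above.
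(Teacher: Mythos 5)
The paper does not actually prove this lemma: its ``proof'' is the citation ``see \cite{aapw} for $\ku G$ and \cite{andruskiewitsch-natale} for $\ku^G$'', and the arguments in those references are along the same lines as yours --- a finite $G$-stable set of homogeneous generators, centrality of $x^{\ord(g)}$ for $x$ homogeneous of degree $g$ (your computation of these central elements is correct in both categories), and module-finiteness over the resulting central affine commutative subalgebra. So your reduction is the intended one, and the weight of the proof sits exactly where you say it does: the spanning of $A$ by ordered monomials. There, however, your proposal is not yet a proof, and the external theorem you invoke does not exist in the form you state it. PBW/normal-form theorems for structure algebras of set-theoretic solutions are available for \emph{involutive} solutions (Gateva-Ivanova--Van den Bergh), whereas the solution here, $r(x,y)=(g_x\cdot y,\,x)$, is typically non-involutive ($c^2(x\ot y)=(g_xg_yg_x^{-1}\cdot x)\ot (g_x\cdot y)$). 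What is actually known for general finite bijective non-degenerate solutions (Jespers et al.) is that the structure algebra is Noetherian, PI, and module-finite over a central affine subalgebra; since $A$ is a quotient of the structure algebra of $(X, c|_{X\ot X})$, that result alone would finish the proof --- rendering your ordered monomials and central elements unnecessary --- but it is a much heavier input than a normal form, and not the one you cited.

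Fortunately, for the solutions occurring here (where the second output of $r$ is just the first input) the spanning claim has an elementary proof, so your argument closes without external input. Since $X$ is $G$-stable and homogeneous, $c$ restricts to a bijection $r$ of the finite set $X\times X$, and braided commutativity says the product in $A$ is constant along each $r$-orbit; these orbits are cycles $(y_1,y_0)\mapsto(y_2,y_1)\mapsto\cdots$ with the letter sequence $(y_k)_k$ periodic, so every letter of a cycle occurs as a first coordinate of some pair in it. Hence the lexicographically least pair of each orbit begins with the minimal letter of that orbit, and any such pair is ordered. Now rewrite an arbitrary word in the generators by repeatedly replacing an adjacent pair that is not lex-least in its orbit by the lex-least pair of its orbit: each move preserves the element of $A$, keeps the length, and strictly decreases the word lexicographically, so the process terminates (finite alphabet, fixed length), and a terminal word has every adjacent pair ordered, i.e.\ is an ordered monomial. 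Reducing exponents modulo $\ord(g_j)$ against your central $z_j$ then gives $A=\sum_{0\le a_j<\ord(g_j)}B\,x_1^{a_1}\cdots x_m^{a_m}$ as you wanted; the $\yd{\ku^G}$ case is the mirror image of this argument. One last caveat: your remark that the abelian case is ``immediate'' because the relations become $x_ix_j=q_{ij}x_jx_i$ tacitly assumes the $G$-action on the generators is diagonalizable, which can fail when $\car\ku$ divides $|G|$; the orbit argument above covers that case as well, so nothing is lost.
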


\begin{proof}
See \cite{aapw} for $\ku G$ and  \cite{andruskiewitsch-natale} for $\ku^G$.
\end{proof}

We are ready to prove the theorem. Let $K$ be either $\ku G$ (assumed semisimple)
or $\ku^G$.

\vspace{1ex}

\noindent
\emph{Proof of Theorem} \ref{th:RtoRsmashH}. 
Lemma \ref{lemma:braidedcommut-Noetherian},  
Lemma \ref{lemma:braided-commutative} and the hypothesis imply that 
$\coh(R, \ku)$ is Noetherian. Then $\coh(R, \ku)^{K}$ is finitely generated by Lemma \ref{lemma:hilbert-invariants}.
We conclude from Lemma \ref{lemma:SV} that 
$\coh(R \# K, \ku) \cong \coh(R, \ku)^{K}$ is finitely generated, i.e., 
\ref{item:fgca} holds for $R \# K$.
The proof of \ref{item:fgcb}  for $R \# K$ is  similar to
the proof of the same fact in \cite{aapw}*{Theorem 3.1.6}. 
\qed

\vspace{1ex}

\medbreak
Let $K$  and  $R$ be as in the beginning of the subsection.
In order to show that $R\# K$ has fgc with the previous arguments,
we only need to extend Lemma \ref{lemma:braidedcommut-Noetherian}
to the general case. 

\begin{question}
Let $K$ be a semisimple Hopf algebra and let 
$A$ be a braided commutative algebra in $\yd{K}$, finitely generated as an algebra. 
Does it follow that $A$ is Noetherian?
\end{question}

See Theorem \ref{thm:K-semisimple-R-fgc} below for a partial answer to this question. We remark that \cite{NWW2021}*{Lemma 5.8} assumes this property for $\coh(R,\ku)$, to conclude that the smash product $R \rtimes K$ has Noetherian cohomology ring.
\vspace{1ex}

\subsection{Power reductivity} \label{subsec:power}
\vspace{1ex}

Let $\mathbb K$ be a commutative ring and let $G$ be a (flat affine) group scheme over $\mathbb K$.
The history of the following question goes back to the XIX century and to Hilbert's Fourteenth Problem:

\medbreak
Under which conditions is the ring of invariants $R^G$ affine for any affine commutative $\mathbb K$-algebra $R$ on which $G$ acts?

\medbreak In \cite{vdKallen-survey}, van der Kallen introduced the notion of \emph{power reductive} group schemes. Given a group scheme $G$,
he showed (under suitable hypotheses on $\mathbb K$) 
that $R^G$ is affine for any 
affine commutative $\mathbb K$-algebra $R$ on which $G$ acts if and only if $G$ is power reductive. 
This generalizes the work of Hilbert, Noether, Nagata, Mumford, Haboush, Seshadri, \dots In particular, a finite group scheme is power reductive by a result of Grothendieck; 
other proofs were offered by Schneider and Ferrer Santos, 
see \cite{montgomery-book}*{Theorem 4.2.1}.
Borrowing the terminology from \cite{vdKallen-survey}, we introduce the following notion
needed later.

\begin{definition}\label{def:power-reductive}
A finite-dimensional Hopf algebra $H$ is \emph{power reductive} 
if any commutative $H$-module algebra $R$ is integral over its invariant subring $R^H$. 
In particular, if $R$ is affine, then $R$ is a finite module over $R^{H}$ and $R^H$ is affine
(by the Artin-Tate Lemma).
\end{definition}

As it stands, it appears that Definition \ref{def:power-reductive} is stronger than
asking that the invariant subring is affine; we wonder whether the converse is true,
namely if $R^H$ is affine, then necessarily $R$ is integral over $R^H$ (assuming that $H$ is finite-dimensional).

We summarize some classes of finite-dimensional Hopf algebras that are power reductive.

\begin{lemma} \label{HopfI}
A finite-dimensional Hopf algebra $H$ is power reductive 
in any of the following cases:

\begin{enumerate}[leftmargin=7ex,label=\rm{(\roman*)}]
\item\label{item:HopfI-poschar} 
the base field $\ku$ has positive characteristic,

\smallbreak 
\item \label{item:HopfI-ss} 
 $H$ is semisimple,

\smallbreak
\item \label{item:HopfI-coss}  $H$ is cosemisimple,

\smallbreak
\item\label{item:HopfI-cocom} $H$ is cocommutative. 
\end{enumerate}
\end{lemma}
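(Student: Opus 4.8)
The plan is to check the integrality clause of Definition~\ref{def:power-reductive} one element at a time and to reduce all four cases to a single affine invariant-theoretic statement. Saying that $R$ is integral over $R^H$ means that every $r\in R$ satisfies a monic polynomial over $R^H$, and this is unaffected by shrinking $R$ to an $H$-stable subalgebra containing $r$. So, given $r\in R$, I would first form the finite-dimensional $H$-stable subspace $V\coloneqq H\cdot r$ (finite-dimensional since $\dim_\ku H<\infty$), which contains $r$. The inclusion $V\hookrightarrow R$ extends, by the universal property of the symmetric algebra, to an $H$-equivariant algebra map $\phi\colon S(V)\to R$; set $A\coloneqq \phi(S(V))=\ku[V]$, an affine, $H$-stable subalgebra of $R$. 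Since $\phi(S(V)^H)\subseteq A^H\subseteq R^H$, a monic relation for $r\in V=S^1(V)$ over $A^H$ maps to one over $R^H$. Thus the lemma reduces to the affine (indeed connected graded) case: show that $A$ is integral over $A^H$, equivalently (with $A^H$ affine) that $A$ is a finite $A^H$-module.

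For such a graded $A$, module-finiteness over $A^H$ is, by the graded Nakayama lemma, equivalent to $\dim_\ku A/(A^H_+A)<\infty$, i.e.\ to the positive-degree $H$-invariants cutting out only the irrelevant ideal (triviality of the ``nullcone''). This is exactly the geometric-reductivity content encoded by the assertion that $H$ is power reductive in the sense of van der Kallen \cite{vdKallen-survey}. When $H$ carries a normalized integral (the (co)semisimple cases) one already gets a Reynolds operator, hence that $A^H$ is affine by the Hilbert-style argument of Lemma~\ref{lemma:hilbert-invariants}; the genuinely reductive input is the remaining finiteness. A concrete model for the monic relation, valid for every finite-dimensional $H$, is the characteristic polynomial of multiplication by $\rho(r)$ on the free $R$-module $R\otimes H^*$ (with $\rho\colon R\to R\otimes H^*$ the coaction dual to the action): Cayley--Hamilton together with the counit axiom $(\id\otimes\epsilon_{H^*})\rho=\id$ yields a monic polynomial vanishing at $r$ whose coefficients lie in the $H$-stable subalgebra generated by the translates $H\cdot r$. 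The whole difficulty is to force these coefficients into $R^H$.

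I would then dispatch the four cases, exploiting that they largely collapse. For \ref{item:HopfI-cocom}, $H$ cocommutative is equivalent to $H^*$ commutative, so $H^*=\mathcal O(G)$ is the coordinate algebra of the finite group scheme $G=\Spec H^*$; finite-dimensional $H$-modules are $H^*$-comodules, an $H$-module algebra is a $G$-algebra with $R^H=R^G$, and over the commutative $H^*$ the characteristic polynomial above becomes a genuine norm with symmetric-function, hence invariant, coefficients. That finite group schemes are power reductive is the theorem of Grothendieck, with further proofs of Schneider and Ferrer Santos; see \cite{montgomery-book}*{Theorem 4.2.1}. The remaining cases reduce to these: in characteristic zero a cosemisimple (resp.\ cocommutative) finite-dimensional Hopf algebra is semisimple (Larson--Radford), resp.\ a finite group algebra (Cartier--Kostant--Milnor--Moore), so \ref{item:HopfI-coss} and \ref{item:HopfI-cocom} fall back on \ref{item:HopfI-ss}; and in positive characteristic all of \ref{item:HopfI-ss}--\ref{item:HopfI-cocom} are instances of \ref{item:HopfI-poschar}. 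Hence the only two inputs one actually needs are \ref{item:HopfI-ss} (semisimple, via the Reynolds operator together with the reductivity of semisimple Hopf algebras) and \ref{item:HopfI-poschar}, the genuinely positive-characteristic statement, where geometric reductivity of an \emph{arbitrary} finite-dimensional Hopf algebra is supplied by the positive-characteristic invariant theory, the cocommutative instance again being the finite group scheme theorem.

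The main obstacle is the single step the four hypotheses are tailored to overcome: triviality of the nullcone for the linear $H$-action on $S(V)$, equivalently that the characteristic-polynomial coefficients may be taken $H$-invariant, equivalently geometric reductivity of $H$. The Reynolds operator only delivers this in the (co)semisimple world, and passing from ``$A^H$ affine'' to ``$A$ finite over $A^H$'' is the delicate part; in the cocommutative and positive-characteristic settings it is precisely the Grothendieck--Schneider--Ferrer Santos theorem that does the work. Beyond these cases—an arbitrary finite-dimensional, non-cosemisimple, non-cocommutative Hopf algebra in characteristic zero—geometric reductivity is not known, which is exactly why the statement is phrased case-by-case rather than for all finite-dimensional $H$.
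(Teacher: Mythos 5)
Your reduction to the graded case is sound in spirit, though it needs a small repair: the image $A=\phi(S(V))$ need not be graded (the relations cutting it out of $S(V)$ need not be homogeneous), so the integrality argument should be run in $S(V)$ itself and the monic relation pushed forward along $\phi$ into $R^H$. Granting that, your treatment of case \ref{item:HopfI-cocom} --- identifying $H$-module algebras with algebras over the finite group scheme $\operatorname{Spec} H^*$ and quoting the Grothendieck/Schneider/Ferrer Santos theorem, \cite{montgomery-book}*{Theorem 4.2.1} --- is exactly what the paper does, and your reduction of \ref{item:HopfI-coss} to \ref{item:HopfI-ss} in characteristic zero via Larson--Radford, and of everything to \ref{item:HopfI-poschar} in positive characteristic, is a legitimate repackaging (the paper instead cites \cite{Zhu1996}*{Theorem 2.1} directly for the cosemisimple case).

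The genuine gap is that the two cases you yourself isolate as ``the only two inputs one actually needs,'' namely \ref{item:HopfI-poschar} and \ref{item:HopfI-ss}, are never proved. For \ref{item:HopfI-ss} you invoke ``the reductivity of semisimple Hopf algebras,'' which \emph{is} the statement to be established, and for \ref{item:HopfI-poschar} you appeal to ``positive-characteristic invariant theory'' with no argument or reference; both appeals are circular or empty. The Reynolds operator cannot close either gap: it yields the Hilbert-type conclusion that $A^H$ is affine (Lemma \ref{lemma:hilbert-invariants}), but the passage from ``$A^H$ affine'' to ``$A$ integral over $A^H$'' is precisely what the paper flags as unclear in the discussion after Definition \ref{def:power-reductive}, where it is left as an open question whether affineness of invariants implies integrality; note also that linear reductivity alone is not the mechanism, since finiteness of $H$ is essential to integrality. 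Your Cayley--Hamilton device does not rescue this either, since, as you concede, you cannot force its coefficients into $R^H$ outside the cocommutative case. At exactly this point the paper's proof consists of citations to two nontrivial theorems of Skryabin: \cite{skryabin-advances}*{Proposition 2.7} for \ref{item:HopfI-poschar} and \cite{skryabin-advances}*{Theorem 6.1} for \ref{item:HopfI-ss}. Without these (or independent proofs of them), your argument establishes only \ref{item:HopfI-cocom} unconditionally; the proposal in effect reduces the lemma to its two hardest cases and then asserts them.
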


\pf 
See \cite{skryabin-advances}*{Proposition 2.7}
for \ref{item:HopfI-poschar},
see \cite{skryabin-advances}*{Theorem 6.1} for
\ref{item:HopfI-ss}, 
see \cite{Zhu1996}*{Theorem 2.1} 
for
\ref{item:HopfI-coss},
and,
finally, note that
\ref{item:HopfI-cocom} was discussed above \cite{vdKallen-survey}.
\epf

See \cite{Zhu1996} for more examples.
Notice that Zhu \cite{Zhu1996} showed that
the four-dimensional Sweedler Hopf algebra, in characteristic zero, is not power reductive.

\vspace{1ex}

\section{Deformation sequences of algebras} \label{sec:deformations} 

In this section we review the notion of deformation sequence of algebras.
Theorem \ref{thm:negron-pevtsova} from  \cite{negron-pevtsova} gives a
criterion for finite generation of the cohomology of an augmented algebra.

\vspace{1ex}

\subsection{Deformation sequences} \label{subsec:deformations} 
Let us recall the main features of deformations as in
\cite{bezrukavnikov-ginzburg}, as set forth in \cite{negron-pevtsova}.
In the next definition,
we repeat the first three axioms
used to define exact sequences
of Hopf algebras 
(see Definition \ref{ExactSequenceDef})
and so enumerate them with (a), (b), (c) again.
By {\em smooth} in condition
\ref{item:deformation-Z}, we mean that the Zariski spectrum 
of the finitely generated $\ku$-algebra $Z$
has no singularities. 

\begin{definition}
\label{def:defor-seq}
A \emph{deformation sequence} is a pair  of algebra maps
$Z \overset{\iota}{\hookrightarrow} Q \overset{\pi}{\twoheadrightarrow} R$
such that
$Z$ has a distinguished point 
$\epsilon_Z: Z \to \ku$
satisfying the following seven conditions:

\begin{enumerate}[leftmargin=7ex,label=\rm{(\alph*)}]
\item\label{item:deformation-iota} $\iota$ is injective, in which case we identify $Z$ with $\iota(Z)$
in $Q$,

\medbreak
\item\label{item:deformation-pi} $\pi$ is surjective,

\medbreak
\item\label{item:ker-pi-Z} $\ker \pi = Q\, Z^+$ for $Z^+ = \ker  \epsilon_Z$,

\medbreak
\setcounter{enumi}{4}
\item\label{item:deformation-flat} $Q$ is finitely generated and flat as a 
left $Z$-module, 

\medbreak
\item\label{item:deformation-Z}  
$Z$ is smooth, finitely generated,
and central in $Q$,

\medbreak
\item\label{item:augmented-sequence} $Q$ and $R$ are augmented algebras, and
$\iota$ and $\pi$ preserve the augmentations, and

\medbreak
\setcounter{enumi}{7}
\item\label{item:finite-global-dim}  
$Q$ has finite global dimension.
\end{enumerate}
In this case, we say $R$ 
{\em admits
a deformation sequence}.
\end{definition}

\vspace{1ex}

\begin{remark}
\label{rem:Noetherian}
Note that in any
deformation sequence
$Z \hookrightarrow Q \twoheadrightarrow R$,
both $Q$ and $Z$ are Noetherian and finitely generated, $Q$ is PI and has finite Gelfand-Kirillov dimension. Indeed,
$Z$ is a finitely generated, commutative
algebra 
by Definition \ref{def:defor-seq} \ref{item:deformation-Z}
and thus Noetherian by the
Hilbert Basissatz. As $Q$ is finite over $Z$
by Definition \ref{def:defor-seq}
\ref{item:deformation-flat}, it is also
 Noetherian and finitely generated.
 Finally, $Q$ is PI by \cite{mcconnell-robson}*{Section 13.1.13, p. 481};
 hence $\GK Q < \infty$ by \cite{krause-lenagan}*{Corollary 10.7}.
\end{remark}

\vspace{1ex}

\begin{remark}\label{LastTermFiniteDimensional}
Given a deformation sequence 
$Z \hookrightarrow Q \twoheadrightarrow R$,
Definition \ref{def:defor-seq} \ref{item:deformation-flat} implies that
 the fiber  $\ku  \otimes_Z Q \cong R$ at the closed point $\epsilon_{Z}$ 
is finite-dimensional.
\end{remark}

\vspace{1ex}

\subsection{Formal deformation sequences}
\medbreak
There is a ``formal'' version of Definition \ref{def:defor-seq}:

\begin{definition}\label{def:defor-seq-formal} 
A pair of algebra maps 
$\widehat{Z} \overset{\widehat{\iota}}{\hookrightarrow} \widehat{Q} \overset{\widehat{\pi}}{\twoheadrightarrow} R$
is a \emph{formal deformation sequence} if  
the conditions of Definition \ref{def:defor-seq}
hold with \ref{item:deformation-Z} replaced by 

\medbreak
\begin{enumerate}[leftmargin=7ex,label=\rm{(\alph*')}]  
\setcounter{enumi}{5}
\item 
\label{item:deformation-local}  
$\widehat{Z}$ is central in $\widehat{Q}$ 
with $\widehat{Z} \cong   \ku [[y_1, . . . , y_n]]$,
formal power series in $n$ coordinates.
\end{enumerate}
\end{definition}

\vspace{1ex}

We verify below that every deformation sequence 
can be completed to a formal one,
a fact used to prove Theorem \ref{thm:negron-pevtsova} 
establishing the fgc for augmented algebra admitting a deformation sequence. Recall that an algebra $\At$ is  \emph{semilocal} if $\At/ \Jac \At$ is Artinian, where $\Jac \At$ is its Jacobson radical. We now recall a well-known fact on semilocal algebras
whose proof we include for completeness.

\begin{lemma}\label{lemma:semilocal}
Let $\At$ be an algebra finitely generated as a module 
over a central local subalgebra $\Zt$ with maximal ideal $\mathfrak{n}$.
\begin{enumerate}[leftmargin=7ex,label=\rm{(\roman*)}]  
\item\label{item:semilocal-1} $\At$ is semilocal.

\item\label{item:semilocal-2} Let $\Rt = \At / \mathfrak{n} \At$. Then $\At / \Jac \At \cong \Rt / \Jac \Rt$.
\end{enumerate}
 \end{lemma}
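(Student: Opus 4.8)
The plan is to reduce both parts to a single key containment, namely $\mathfrak{n}\At \subseteq \Jac\At$, together with the observation that $\Rt = \At/\mathfrak{n}\At$ is Artinian. Since $\Zt$ is central it is commutative, and being local its maximal ideal coincides with its Jacobson radical, $\mathfrak{n} = \Jac\Zt$, while the quotient $\Zt/\mathfrak{n}$ is a field.

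First I would establish $\mathfrak{n}\At \subseteq \Jac\At$. Let $S$ be a simple left $\At$-module. As a nonzero cyclic $\At$-module it is generated by one element, and since $\At$ is module-finite over $\Zt$, it follows that $S$ is finitely generated as a $\Zt$-module. Because $\Zt$ is central in $\At$, the subset $\mathfrak{n}S$ is not merely a $\Zt$-submodule but an $\At$-submodule: for $a \in \At$, $z \in \mathfrak{n}$, $s \in S$ one has $a(zs) = z(as) \in \mathfrak{n}S$. By simplicity, $\mathfrak{n}S$ is either $0$ or $S$, and the case $\mathfrak{n}S = S$ is excluded by Nakayama's lemma, since $S$ is a finitely generated $\Zt$-module and $\mathfrak{n} = \Jac\Zt$; thus $\mathfrak{n}S = 0$. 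As $S$ was arbitrary, $\mathfrak{n}$ annihilates every simple left $\At$-module, whence $\mathfrak{n}\At \subseteq \bigcap_S \Ann_{\At}(S) = \Jac\At$.

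For part \ref{item:semilocal-1}, the quotient $\Rt = \At/\mathfrak{n}\At$ is a finitely generated module over the field $\Zt/\mathfrak{n}$, hence a finite-dimensional $\Zt/\mathfrak{n}$-algebra and in particular Artinian. Since $\mathfrak{n}\At \subseteq \Jac\At$ by the previous step, $\At/\Jac\At$ is a quotient of $\Rt$, and a quotient of an Artinian ring is Artinian; therefore $\At$ is semilocal. For part \ref{item:semilocal-2}, I would invoke the standard fact that whenever $I$ is a two-sided ideal contained in $\Jac\At$, one has $\Jac(\At/I) = \Jac(\At)/I$. Here $I = \mathfrak{n}\At$ is two-sided by the centrality of $\Zt$ and lies in $\Jac\At$ by the first step, so $\Jac\Rt = \Jac\At/\mathfrak{n}\At$. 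The third isomorphism theorem then gives $\Rt/\Jac\Rt = (\At/\mathfrak{n}\At)/(\Jac\At/\mathfrak{n}\At) \cong \At/\Jac\At$, as claimed.

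The only genuinely non-formal step is the containment $\mathfrak{n}\At \subseteq \Jac\At$; everything else is bookkeeping with quotients and the Artinianity of a finite-dimensional algebra over a field. The crux there is the use of centrality of $\Zt$ to promote $\mathfrak{n}S$ to an $\At$-submodule, which is precisely what makes Nakayama's lemma applicable to a simple module and forces $\mathfrak{n}S = 0$.
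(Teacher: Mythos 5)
Your proof is correct and follows essentially the same route as the paper's: both arguments use the centrality of $\Zt$ together with Nakayama's lemma to show that $\mathfrak{n}$ annihilates every simple $\At$-module (hence $\mathfrak{n}\At \subseteq \Jac\At$), and then deduce both parts from the fact that everything in sight is finitely generated over the residue field $\Zt/\mathfrak{n}$. Your packaging of the Nakayama step---observing that $\mathfrak{n}S$ is an $\At$-submodule and invoking simplicity---is a slightly cleaner version of the paper's case analysis with the maximal left ideal $\Ann v$, but the underlying argument is identical.
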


\pf Let $V$ be a simple $\At$-module. Pick $v \in V \backslash 0$ and set 
$I \coloneqq \Ann v \supseteq \Ann V$, where Ann  stands for the annihilator.
Thus $I$ is a maximal left ideal and $V \cong \At/I$. 
We claim that  $ J \coloneqq I \cap \Zt = \Ann V\cap \Zt$. 

Here $\supseteq$ is clear; for $\subseteq$ we argue as follows.
Pick  $x\in  I \cap \Zt$. Since $x \in I$, $x v = 0$; since $x \in \Zt$,
$xav = axv = 0$ for any $a \in \At$ hence $x \in \Ann \At v = \Ann V$.
 
Observe that the ideal $J$ of $\Zt$ is proper, otherwise $1 \in J$ and 
$V$ would be $0$; thus $J \subset \mathfrak{n}$.
We claim that $J = \mathfrak{n}$.

If $\mathfrak{n} \At \subset I$, then $\mathfrak{n} \subset I \cap \Zt = J$, hence $J = \mathfrak{n}$.
So, suppose that $\mathfrak{n} \At$ is not contained in the maximal left ideal $I$; 
hence  $\mathfrak{n} \At + I = \At$ and a fortiori $\mathfrak{n} V = V$. 
Now the $\Zt$-module $V$ is finitely generated by hypothesis; thus
the Nakayama Lemma \cite{atiyah-macdonald}*{Proposition 2.6} implies that $V = 0$, a contradiction.

Therefore, $\Jac \At \cap \Zt = \bigcap_{V \text{ simple}} \Ann V \cap \Zt = \mathfrak{n}$.
Hence $\At / \Jac \At$ is a finitely generated vector space over 
the field $\Zt / \mathfrak{n}$; thus $\At / \Jac \At$ is Artinian, i.e., $\At$ is semilocal
showing \ref{item:semilocal-1}.

\medbreak
Since $\mathfrak{n} = \Jac \At \cap \Zt$, 
$\mathfrak{n} \At \subseteq \Jac \At$, hence 
$\At / \Jac \At \cong (\At/\mathfrak{n} \At) / (\Jac \At / \mathfrak{n} \At) \cong \Rt / \Jac \Rt$, proving \ref{item:semilocal-2}.
\epf

In the next lemma, we view $\widehat{Z}\ot_Z Q$
as an algebra under multiplication
in each tensor component
recalling that, in any
deformation sequence $Z \hookrightarrow Q \twoheadrightarrow R$, we have $Z$ is central in $Q$.
We include a proof for
reader's convenience.
We use the notation $\mgo = Z^+$ noting $\mgo$ has a geometric flavor 
whilst $Z^+$ is more algebraic.

\begin{lemma}\label{lem:formal-deformation} \cite{negron-pevtsova}*{Lemma 2.10}
Any deformation sequence $Z \overset{\iota}{\hookrightarrow} Q \overset{\pi}{\twoheadrightarrow} R$
gives rise to a formal deformation sequence
\begin{equation}\label{eq:formal-deformation}
    \widehat{Z} \overset{\widehat{\iota}}{\hookrightarrow} \widehat{Z} \otimes_{Z} Q  \overset{\widehat{\pi}}{\twoheadrightarrow} R
\end{equation}
where $\widehat{Z}$ is the completion of the localization $Z_{\mgo}$ with respect to the $\mgo$-adic topology.  
\end{lemma}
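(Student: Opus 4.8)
The plan is to realize the formal sequence \eqref{eq:formal-deformation} by applying the flat base change $\widehat{Z}\otimes_Z(-)$ to the given deformation sequence, and then to verify the axioms of Definition \ref{def:defor-seq} (with \ref{item:deformation-Z} replaced by its formal analogue) in three groups. Throughout I write $\widehat{Q}=\widehat{Z}\otimes_Z Q$, denote by $\widehat{\mgo}$ the maximal ideal of $\widehat{Z}$, and let $\widehat{\epsilon}_Z\colon \widehat{Z}\to\widehat{Z}/\widehat{\mgo}=\ku$ be the induced augmentation. Two preliminary facts underlie everything. First, $\widehat{Z}$ is flat over $Z$: since $Z$ is Noetherian (Remark \ref{rem:Noetherian}), the localization $Z_{\mgo}$ is flat over $Z$ and its $\mgo$-adic completion $\widehat{Z}$ is flat over $Z_{\mgo}$, so the composite is flat. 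Second, because $Z$ is smooth, $Z_{\mgo}$ is regular local with residue field $Z/\mgo\cong\ku$; hence $\widehat{Z}$ is a complete regular local ring containing $\ku$, and the Cohen structure theorem yields $\widehat{Z}\cong\ku[[y_1,\dots,y_n]]$ with $n=\dim Z_{\mgo}$. This is exactly condition \ref{item:deformation-local}, and centrality of $\widehat{Z}$ in $\widehat{Q}$ is immediate from the commutativity of $\widehat{Z}$ and the centrality of $Z$ in $Q$ (which is also what makes $\widehat{Z}\otimes_Z Q$ an algebra).

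Next I would dispatch the formal axioms \ref{item:deformation-iota}, \ref{item:deformation-pi}, \ref{item:ker-pi-Z}, \ref{item:deformation-flat}, \ref{item:augmented-sequence} as consequences of flatness and base change. Flatness of $\widehat{Z}$ over $Z$ sends the injection $Z\hookrightarrow Q$ to the injection $\widehat{\iota}\colon\widehat{Z}\hookrightarrow\widehat{Q}$, giving \ref{item:deformation-iota}. I define $\widehat{\pi}$ as the surjection $\widehat{Z}\otimes_Z Q\twoheadrightarrow \ku\otimes_Z Q=Q/QZ^+=R$ induced by $\widehat{\epsilon}_Z$, which is \ref{item:deformation-pi}; right-exactness of $\widehat{Z}\otimes_Z(-)$ then identifies $\ker\widehat{\pi}$ with $\widehat{\mgo}\,\widehat{Q}=\widehat{Q}\,\widehat{Z}^+$, giving \ref{item:ker-pi-Z}. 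Finite generation and flatness of $\widehat{Q}$ over $\widehat{Z}$ \ref{item:deformation-flat} are inherited from the corresponding properties of $Q$ over $Z$ under base change. For the augmentations \ref{item:augmented-sequence} I set $\epsilon_{\widehat{Q}}=\epsilon_R\circ\widehat{\pi}$; one checks directly that $\widehat{\pi}\widehat{\iota}(\widehat{z})=\widehat{\epsilon}_Z(\widehat{z})\,1_R$, so $\epsilon_{\widehat{Q}}\widehat{\iota}=\widehat{\epsilon}_Z$ and $\epsilon_R\widehat{\pi}=\epsilon_{\widehat{Q}}$.

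The substance of the argument is the finite global dimension \ref{item:finite-global-dim}, and here I would reduce to simple modules. By Lemma \ref{lemma:semilocal}, $\widehat{Q}$ is semilocal with $\widehat{Q}/\Jac\widehat{Q}\cong R/\Jac R$; being module-finite over the complete local ring $\widehat{Z}$, it is moreover $\widehat{\mgo}$-adically complete, hence semiperfect. Consequently $\gldim\widehat{Q}=\projdim_{\widehat{Q}}(\widehat{Q}/\Jac\widehat{Q})=\max_S\projdim_{\widehat{Q}}S$, the maximum taken over the finitely many simple $\widehat{Q}$-modules $S$, each of which is a simple $R$-module pulled back along $\widehat{\pi}$. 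Given such an $S$, I view it as a finitely generated $Q$-module via $\pi$; since $\pi\iota$ factors through $\epsilon_Z$, the ideal $Z^+$ annihilates $S$, whence a direct computation gives $\widehat{Z}\otimes_Z S\cong(\widehat{Z}/\mgo\widehat{Z})\otimes_\ku S\cong S$ as $\widehat{Q}$-modules. Choosing a finite projective $Q$-resolution $P_\bullet\to S$ of length at most $\gldim Q<\infty$ and applying the exact functor $\widehat{Z}\otimes_Z(-)$ produces a finite projective $\widehat{Q}$-resolution of $\widehat{Z}\otimes_Z S\cong S$ of the same length. Therefore $\projdim_{\widehat{Q}}S\le\gldim Q$ for every simple $S$, and $\gldim\widehat{Q}\le\gldim Q<\infty$.

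I expect step \ref{item:finite-global-dim} to be the main obstacle: the remaining axioms are formal consequences of the flatness of $\widehat{Z}$ over $Z$, whereas controlling the global dimension requires both the semiperfectness of $\widehat{Q}$ (to reduce $\gldim$ to projective dimensions of simples) and the compatibility $\widehat{Z}\otimes_Z S\cong S$ together with exactness of base change (to transport a finite resolution from $Q$ to $\widehat{Q}$). The delicate point deserving care is precisely that identification $\widehat{Z}\otimes_Z S\cong S$, which rests on $Z^+$ annihilating every simple module $S$ so that the completion in the $\widehat{Z}$-direction collapses to $S$ itself.
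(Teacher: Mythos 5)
Your proof is correct, and it follows the paper's overall skeleton (base change along $Z\to\widehat{Z}$, routine verification of axioms \ref{item:deformation-iota}--\ref{item:augmented-sequence} and \ref{item:deformation-local}, with all the real work in the finite global dimension axiom \ref{item:finite-global-dim}), but your treatment of that key axiom takes a genuinely different route. The paper proceeds through flat dimensions: it cites \cite{WZ2000}*{Lemma 5.4 (2)} for $\gldim\widehat{Q}=\projdim_{\widehat{Q}}(\widehat{Q}/\Jac\widehat{Q})$, uses Lemma \ref{lemma:semilocal}\ref{item:semilocal-2} to replace $\widehat{Q}/\Jac\widehat{Q}$ by $R/\Jac R$, converts projective dimension to flat dimension (Weibel, valid for finitely generated modules over Noetherian rings), applies the flat-base-change inequality $\text{flatdim}_{\widehat{Q}}(\widehat{Q}\otimes_Q(R/\Jac R))\leq\text{flatdim}_Q(R/\Jac R)$, and converts back. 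You instead invoke semiperfectness of $\widehat{Q}$ (module-finite over the complete local $\widehat{Z}$, with $\widehat{\mgo}\widehat{Q}\subseteq\Jac\widehat{Q}$ and $\widehat{Q}/\widehat{\mgo}\widehat{Q}\cong R$ finite-dimensional, so idempotents lift) to reduce $\gldim\widehat{Q}$ to projective dimensions of the finitely many simples, and then transport \emph{projective} resolutions directly: projectives base-change to projectives along $Q\to\widehat{Q}$, and exactness of $\widehat{Z}\otimes_Z(-)$ preserves the resolution. The pivotal computation is the same in both proofs --- yours is $\widehat{Z}\otimes_Z S\cong S$ for each simple $S$, the paper's is $\widehat{Q}\otimes_Q T\cong T$ for $T=R/\Jac R$, both resting on $Z^+$ annihilating everything in sight --- so the difference is purely in how the dimension bound is transferred. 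Your route is more self-contained (no $\projdim=\text{flatdim}$ conversions, and it yields $\gldim\widehat{Q}\leq\gldim Q$ by an explicit resolution) at the cost of needing semiperfectness rather than mere semilocality; the paper's route needs only Lemma \ref{lemma:semilocal} plus the cited homological facts. One small suggestion: your assertion that $\gldim\widehat{Q}=\max_S\projdim_{\widehat{Q}}S$ for a Noetherian semiperfect ring is a genuine (if standard) theorem --- it relies on the existence of minimal projective resolutions of finitely generated modules --- and deserves a citation, for instance the same \cite{WZ2000}*{Lemma 5.4 (2)} the paper uses, or a reference on semiperfect Noetherian rings; as written it is the only unsupported load-bearing step.
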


\begin{proof}
We first apply to $Z \overset{\iota}{\hookrightarrow} Q \overset{\pi}{\twoheadrightarrow} R$
the localization functor at $\mgo$ as $Z$-modules. Then we apply to $Z_{\mgo} \overset{\iota}{\hookrightarrow} Q_{\mgo} \overset{\pi}{\twoheadrightarrow} R$ the completion functor at $\mgo_{\mgo}$ as $Z_\mgo$-modules. Concretely, the completion $\widehat{Z}$ of $Z_\mgo$ at 
its maximal ideal $\mgo_\mgo = Z^+_\mgo$ is
$\widehat{Z} = \varprojlim_{n} Z_\mgo/(Z^+_\mgo)^n$.
By \cite{atiyah-macdonald}*{Propositions 3.3 and 10.12}, the functor from the category of 
$Z$-modules to that of $\widehat{Z}$-modules given by 
\begin{align*}
M \to\widehat{M} &\coloneqq \varprojlim_{n} M_\mgo/ (Z^+_\mgo)^n M_\mgo 
\end{align*}
is exact. 
We obtain a  sequence of algebra maps
\begin{equation}\label{eq:formal-deformation-pf}
    \widehat{Z} \overset{\widehat{\iota}}{\hookrightarrow} \widehat{Q} \overset{\widehat{\pi}}{\twoheadrightarrow} \widehat{R}
\end{equation}
satisfying conditions \ref{item:deformation-iota} and 
\ref{item:deformation-pi} of Definition \ref{def:defor-seq}.
Also,  $\widehat{R} \cong R$ as $\mgo$
annihilates $R$ since $Z\hookrightarrow Q \twoheadrightarrow R$
is a deformation sequence.
Hence, we may apply the functor $\widehat{(-)}$
to the exact sequence
$\xymatrix@C-10pt{0 \ar@{->} [r] & \ker \pi
\ar  @{->}[r] & 
Q \ar  @{->}[r]^{ \pi }&  R  \ar@{->} [r] &0}$
of left $Z$-modules to conclude that 
$\xymatrix@C-10pt{0 \ar@{->} [r] & \widehat{\ker \pi} 
\ar  @{->}[r] & 
\widehat{Q} \ar  @{->}[r]^{\widehat{\pi} }&  R  \ar@{->} [r] &0}$
is an exact sequence of left $\widehat{Z}$-modules. But $\widehat{\ker \pi} \cong \widehat{\mgo Q} 
\cong \widehat{\mgo}\widehat{Q}$, thus
\eqref{eq:formal-deformation-pf} satisfies 
Definition \ref{def:defor-seq} \ref{item:ker-pi-Z}.
Now $\widehat{Q}\cong \widehat{Z}\otimes_{Z_{\mgo}} Q_{\mgo}$ 
as left $\widehat{Z}$-modules
by \cite{atiyah-macdonald}*{Proposition 10.13}
since $Z_{\mgo}$ is Noetherian and $Q_{\mgo}$ is finitely generated, 
and this is also an isomorphism
of algebras
as $Z_{\mgo}$ is central in $Q_{\mgo}$.
So, the sequence of algebra maps
\eqref{eq:formal-deformation-pf} boils down to \eqref{eq:formal-deformation}.

\smallbreak
Clearly, $\widehat{Z}$ is a central subalgebra of $\widehat{Q}$ and 
the latter is a finitely generated module over the former.
Since $\widehat{Z}$ is Noetherian,  so is $\widehat{Q}$, which is also PI.
 Also, $\widehat{Z}$ is flat over $Z$, see \cite{atiyah-macdonald}*{Corollary 3.6 and Proposition 10.14}.
By a standard argument, see \cite{atiyah-macdonald}*{2.2}, 
$\widehat{Q} \cong \widehat{Z}\otimes_ZQ$ is flat over $Q$,
analogously, it  is also flat over $\widehat{Z}$ because 
$Q$ is flat over $Z$. Thus \eqref{eq:formal-deformation} satisfies 
  Definition \ref{def:defor-seq} \ref{item:deformation-flat}.

\smallbreak
Next, $\epsilon_Z: Z\to \ku$ is a nonsingular point in $\Spec Z$, hence
$\widehat{Z}\cong \ku[[x_1,\ldots,x_n]]$ for $n = \dim Z_{\mgo}$.
Thus \eqref{eq:formal-deformation} satisfies  Definition \ref{def:defor-seq} \ref{item:deformation-local}.

\smallbreak 
It is clear from construction that \eqref{eq:formal-deformation} satisfies Definition \ref{def:defor-seq} \ref{item:augmented-sequence}.

\smallbreak 
It remains to show that $\widehat{Q}$ has finite global
dimension.
Let $T\coloneqq R/\Jac R$.
First note that $\widehat{T}\cong T$
since $\mgo$ annihilates $R$.
Second observe that
$\widehat{T}
\cong \widehat{Z}\otimes_Z 
\widehat{T}$ as left $\widehat{Z}$-modules
again using \cite{atiyah-macdonald}*{Proposition 10.13}
(as $Z$ is Noetherian and $T$ is finitely generated).
Third, notice that
$
R\cong \im \widehat\pi \cong \widehat Q / \ker \widehat\pi \cong \widehat Q/\widehat Q
\widehat Z^+ $
as algebras
since
$ \widehat{Z} \overset{\widehat{\iota}}{\hookrightarrow} \widehat{Q} \overset{\widehat{\pi}}{\twoheadrightarrow} R$ 
satisfies \ref{item:deformation-iota},
\ref{item:deformation-pi}, and \ref{item:ker-pi-Z}
of Definition \ref{def:defor-seq},
and likewise $R\cong Q/QZ^+$
as $ Z \overset{\iota}{\hookrightarrow} Q \overset{\pi}{\twoheadrightarrow} R$ is a deformation sequence.
We use the induced 
bimodule structure on $T=R/\Jac R$
over $\widehat Q$
and also over $Q$.
One may then check that
the isomorphism of left $\widehat{Z}$-modules
$$
  \widehat{Q} \ot_Q T
  \cong (\widehat Z\ot_Z Q) \ot_Q T
  \cong \widehat Z \ot_Z T
  \cong \widehat T
$$
is compatible with the left $\widehat{Q}$-action
giving an isomorphism of left $\widehat{Q}$-modules
$$
\begin{aligned}
  \widehat{Q} \ot_Q (R/\Jac R)
  =
  \widehat{Q} \ot_Q T
   \cong \widehat T
  \cong T
  = R/\Jac R
  \, .
\end{aligned}
$$
Since $Q\rightarrow \widehat{Q}$ is a flat extension,
we obtain an inequality of flat dimensions
of left modules:
\begin{equation*}
\text{flatdim}_{\widehat Q} (R/\Jac R)
=
\text{flatdim}_{\widehat Q}(
\widehat Q \ot_Q
(R/\Jac R) 
)
\leq
\text{flatdim}_{Q}(R/\Jac R)
\, .
\end{equation*}%

To finish the argument that
$\gldim \widehat{Q}$ is finite,
we relate projective dimension to flat dimension 
of left modules and
use the fact that
$\widehat{Q}$ is  PI  and Noetherian
and,
in addition, semilocal by Lemma
\ref{lemma:semilocal}\ref{item:semilocal-1}:
\begin{align*}
  \gldim \widehat{Q}
  &\ \overset{\clubsuit}{=} \
\projdim _{\widehat{Q}}(\widehat{Q}/\Jac \widehat{Q})
   \ \overset{\spadesuit}{=}\ 
\projdim _{\widehat{Q}}(R/\Jac R)
  \\
  &\ \overset{\diamondsuit}{=}
\ \text{flatdim}_{\widehat{Q}}(R/\Jac R)
   \ {\leq}\ 
  \text{flatdim}_{Q}(R/\Jac R)
  \\
  &\ \overset{\diamondsuit}{=} \ 
\projdim_{Q}(R/\Jac R)
\ \overset{\heartsuit}{\leq} \
\gldim Q \ <\ \infty.
\end{align*} 
Here 
\cite{WZ2000}*{Lemma 5.4 (2)}
implies the equality $\clubsuit$,  
Lemma \ref{lemma:semilocal} \ref{item:semilocal-2} implies 
$\spadesuit$,
\cite{Weibel}*{Proposition~4.1.5}
implies $\diamondsuit$
(see Remarks~\ref{rem:Noetherian} and \ref{LastTermFiniteDimensional}),
and the definition of global dimension
gives
$\heartsuit$.
Thus, \eqref{eq:formal-deformation} satisfies
Definition \ref{def:defor-seq} \ref{item:finite-global-dim} and hence is a formal deformation sequence.
\end{proof}

\vspace{1ex}

\subsection{Deformation sequences and fgc}
The following result is implicit in
\cite{negron-pevtsova}*{Corollary 4.7 and 
the proof of Theorem 4.8}. We sketch the argument for completeness.

\begin{theorem}\label{thm:negron-pevtsova} \cite{negron-pevtsova}
Any finite-dimensional augmented algebra $R$ admitting a deformation sequence has fgc.
\end{theorem}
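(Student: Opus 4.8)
The plan is to pass from the given deformation sequence to its completion and then feed it into the deformation-theoretic finite-generation machinery of \cite{bezrukavnikov-ginzburg}, as adapted in \cite{negron-pevtsova}. First I would invoke Lemma \ref{lem:formal-deformation} to replace $Z \overset{\iota}{\hookrightarrow} Q \overset{\pi}{\twoheadrightarrow} R$ by the formal deformation sequence $\widehat{Z} \overset{\widehat{\iota}}{\hookrightarrow} \widehat{Q} \overset{\widehat{\pi}}{\twoheadrightarrow} R$ with $\widehat{Z} \cong \ku[[x_1,\ldots,x_n]]$. Since $\coh(R,-)$ depends only on $R$, nothing is lost by this passage, while we gain a complete regular local base. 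Because $\widehat{Q}$ is flat over the regular ring $\widehat{Z}$ and, by Definition \ref{def:defor-seq}\ref{item:ker-pi-Z}, $R \cong \widehat{Q}/\widehat{Q}\,\widehat{Z}^+ = \widehat{Q}/(x_1,\ldots,x_n)\widehat{Q}$ is the fiber over the closed point, the central elements $x_1,\ldots,x_n$ form a central regular sequence in $\widehat{Q}$ whose quotient is $R$.

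The homological engine is the finiteness of $\gldim \widehat{Q}$, established in Lemma \ref{lem:formal-deformation}, together with the fact (as in the proof of that lemma) that $\widehat{Q}$ is Noetherian, PI, and module-finite over $\widehat{Z}$. I would then use the standard change-of-rings formalism for a quotient by a central regular sequence: each $x_i$ yields a degree-two Eisenbud--Gulliksen operator, realized as a class $\chi_i \in \Ext^2_R(\ku,\ku)$, and together these endow $\Ext^\bullet_R(M,N)$, for finite-dimensional $R$-modules $M,N$, with the structure of a graded module over the polynomial ring $S = \ku[\chi_1,\ldots,\chi_n]$, $\deg \chi_i = 2$, the action factoring through the deformation (Kodaira--Spencer) classes in $\hoch^2(R,R)$. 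The crucial finiteness input, which is \cite{negron-pevtsova}*{Corollary 4.7} building on \cite{bezrukavnikov-ginzburg}, is that because $\gldim \widehat{Q} < \infty$ and $\widehat{Q}$ is module-finite over the complete regular base $\widehat{Z}$, the graded module $\Ext^\bullet_R(M,N)$ is \emph{finitely generated} over $S$.

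Granting this, the fgc property follows formally. Taking $M=N=\ku$, the ring $\coh(R,\ku)=\Ext^\bullet_R(\ku,\ku)$ is a finite module over the image of the finitely generated commutative ring $S$; adjoining finitely many module generators to $\chi_1,\ldots,\chi_n$ exhibits $\coh(R,\ku)$ as a finitely generated $\ku$-algebra, which is \ref{item:fgca}. For an arbitrary finite-dimensional $R$-module $M$, taking source $\ku$ and target $M$ shows $\coh(R,M)=\Ext^\bullet_R(\ku,M)$ is finite over $S$; since the $\chi_i$ lie in $\coh(R,\ku)$ and act by Yoneda composition, the $S$-module generators already generate $\coh(R,M)$ over $\coh(R,\ku)$, giving \ref{item:fgcb}.

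The main obstacle is precisely the finiteness step of the second paragraph: converting the finite global dimension of the flat family $\widehat{Q}$ into finite generation of $\Ext^\bullet_R(M,N)$ over the ring of deformation parameters $S$. This is the technical core of \cite{negron-pevtsova}, where the completeness and regularity of $\widehat{Z}$, the flatness of $\widehat{Q}$ over it, and the eventual vanishing of $\Ext$ over the finite-global-dimension ring $\widehat{Q}$ are all used simultaneously; by contrast, the reduction to the formal case and the final deduction of \ref{item:fgca} and \ref{item:fgcb} are essentially bookkeeping.
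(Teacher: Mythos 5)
Your proposal is correct and follows essentially the same route as the paper's own proof: reduce to a formal deformation sequence via Lemma \ref{lem:formal-deformation}, invoke the finiteness machinery of \cite{negron-pevtsova} (their Theorem 4.3 and Corollary 4.7, which the paper states via the algebras $A_Z$ and $B_Z$ and you describe equivalently as the polynomial ring $S$ of degree-two operators attached to the central regular sequence $x_1,\ldots,x_n$), and then deduce \ref{item:fgca} and \ref{item:fgcb} from the fact that the $A_Z$-action on $\Ext_R(\ku,M)$ factors through $\Ext_R(\ku,\ku)$. The only difference is presentational: the paper keeps the two-step structure (finiteness of $\Ext_Q$ over $B_Z$, transferred to finiteness of $\Ext_R$ over $A_Z$) explicit, while you compress it into a single citation.
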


\begin{proof}
By Lemma~\ref{lem:formal-deformation}, we may assume that 
$Z \hookrightarrow Q \twoheadrightarrow  R$ is a formal deformation sequence.
Let $\mgo = Z^+$. As in \cite{negron-pevtsova}, consider the algebras
\begin{itemize}  [leftmargin=7ex]\renewcommand{\labelitemi}{$\circ$}
\item $A_Z \coloneqq$ the symmetric algebra on the tangent space $\mgo/ \mgo^2$ shifted to degree 2;

\medbreak
\item $B_Z \coloneqq$ the exterior algebra on the cotangent space $\mgo/ \mgo^2$ 
shifted to degree $-1$.
\end{itemize}
By \cite{negron-pevtsova}*{Theorem 4.3},
the following are equivalent, for any finite $R$-modules $V$ and $W$:
\begin{multicols}{2}
\begin{enumerate}[leftmargin=7ex,label=\rm{(\Roman*)}]  
\item\label{item:ExtQ} $\Ext_Q (V, W)$ is finite over $B_Z$,

\item\label{item:ExtR} $\Ext_R (V, W)$ is finite over $A_Z$.
\end{enumerate}
\end{multicols}

Since $Q$ has finite global dimension, \ref{item:ExtQ} holds by 
\cite{negron-pevtsova}*{Corollary 4.7}, hence \ref{item:ExtR} holds.
Assume that  $R$ is augmented. Then $\Ext_R (\ku, \ku)$ is finite over $A_Z$,
which is a symmetric algebra in a finite number of variables, hence 
$\Ext_R (\ku, \ku)$ is a finitely generated algebra.
Given a finite $R$-module $V$, the action of $A_Z$ on 
$\Ext_R (\ku, V)$ factors through $\Ext_R (\ku, \ku)$. 
So finiteness over $A_Z$, which holds by \cite{negron-pevtsova}*{Theorem 4.3}, 
implies finiteness over $\Ext_R (\ku, \ku)$. This shows that $R$ has fgc. 
\end{proof} 

\vspace{1ex}

We obtain our first main result
giving the fgc for smash products
of semisimple Hopf algebras with algebras
admitting deformation sequences.

\begin{theorem}\label{thm:K-semisimple-R-fgc} Let $K$ be a semisimple Hopf algebra and 
$R$ a finite-dimensional augmented $K$-module algebra.
If $R$ admits a deformation sequence,
then the smash product $R \rtimes K$ has fgc. 
\end{theorem}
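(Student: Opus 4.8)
The plan is to run the same strategy as in the proof of Theorem~\ref{th:RtoRsmashH}, but to feed it with the deformation sequence in place of braided commutativity. Concretely, I would combine Lemma~\ref{lemma:SV}, which computes the cohomology of $R\rtimes K$ as $K$-invariants, with the Hilbert-type Lemma~\ref{lemma:hilbert-invariants}; the only genuinely new input needed is that $\coh(R,\ku)$ is Noetherian, which I extract from the Negron--Pevtsova analysis underlying Theorem~\ref{thm:negron-pevtsova}.

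First I would record two consequences of the deformation sequence $Z\hookrightarrow Q\twoheadrightarrow R$. On the one hand, Theorem~\ref{thm:negron-pevtsova} shows that $R$ has fgc, so that $\coh(R,V)$ is a finitely generated $\coh(R,\ku)$-module for every finite-dimensional $R$-module $V$. On the other hand, the proof of that theorem exhibits $\coh(R,\ku)=\Ext_R(\ku,\ku)$ as a module-finite extension of the subalgebra $A_Z$, the symmetric algebra on $\mgo/\mgo^2$ placed in degree $2$. Since $A_Z$ is a finitely generated commutative, hence Noetherian, algebra acting centrally, and $\coh(R,\ku)$ is a finite $A_Z$-module, I conclude that $\coh(R,\ku)$ is two-sided Noetherian.

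Next, because $R$ is a $K$-module algebra, $K$ acts on $\coh(R,\ku)$, making it a connected graded $K$-module algebra (connected since $R$ is augmented and finite-dimensional, so $\coh^0(R,\ku)=\ku$), and for any $(R\rtimes K)$-module $M$ the space $\coh(R,M)$ becomes a module over $\coh(R,\ku)\rtimes K$ compatibly with the structures appearing in Lemma~\ref{lemma:SV}. By Lemma~\ref{lem:fgc=hfg} it suffices to treat finite-dimensional $M$; restricting such an $M$ to $R$ keeps it finite-dimensional, so $\coh(R,M)$ is a finitely generated $\coh(R,\ku)$-module by the previous paragraph, a fortiori a finitely generated $(\coh(R,\ku)\rtimes K)$-module. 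I would then apply Lemma~\ref{lemma:hilbert-invariants} with $A=\coh(R,\ku)$: as $K$ is semisimple and $A$ is a Noetherian connected graded $K$-module algebra, $\coh(R,\ku)^K$ is finitely generated and $\coh(R,M)^K$ is a finitely generated $\coh(R,\ku)^K$-module.

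Finally, Lemma~\ref{lemma:SV} provides isomorphisms $\coh(R\rtimes K,\ku)\cong\coh(R,\ku)^K$ and $\coh(R\rtimes K,M)\cong\coh(R,M)^K$ under which the module actions correspond, so the two finiteness statements of the previous paragraph translate directly into \ref{item:fgca} and \ref{item:fgcb} for $R\rtimes K$. The step I expect to require the most care is the Noetherianity of $\coh(R,\ku)$: one must confirm that the polynomial subalgebra $A_Z$ supplied by Negron--Pevtsova really is (graded) central in $\coh(R,\ku)$, so that module-finiteness over it forces two-sided Noetherianity and Lemma~\ref{lemma:hilbert-invariants}, whose Hilbert argument presupposes a Noetherian $A$, genuinely applies. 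Once this is secured, the remainder is a faithful transcription of the semisimple case already treated in Theorem~\ref{th:RtoRsmashH}.
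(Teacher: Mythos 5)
Your proposal is correct and follows essentially the same route as the paper's proof: Theorem~\ref{thm:negron-pevtsova} gives fgc for $R$ and Noetherianity of $\coh(R,\ku)$ via module-finiteness over the Noetherian algebra $A_Z$, and then Lemma~\ref{lemma:SV} combined with Lemma~\ref{lemma:hilbert-invariants} transfers both finiteness statements to $R\rtimes K$. The point you flag as delicate (centrality of the $A_Z$-action, needed so that module-finiteness yields Noetherianity and Lemma~\ref{lemma:hilbert-invariants} applies) is treated in the paper with the same reliance on the Negron--Pevtsova construction, so there is no substantive difference.
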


\pf 
Let $M$ be a finitely generated $(R \rtimes K)$-module. Since $R\rtimes K$ is finite-dimensional, $M$ is in particular finitely generated over $R$. Now, since $R$ admits a deformation sequence, $R$ has fgc by Theorem \ref{thm:negron-pevtsova}, and thus $\coh(R,M)$ is finitely generated over $\coh(R,\ku)$. Moreover, as explained in the proof of Theorem \ref{thm:negron-pevtsova}, $\coh(R,\ku)$ is a finitely generated module over a Noetherian algebra $A_Z$ and thus $\coh(R,\ku)$ is Noetherian as an algebra, as well. It follows that $\coh(R,M)$ is a Noetherian module over $\coh(R,\ku)$. By Lemma \ref{lemma:SV}, we have that 
\begin{align*}
\coh(R\rtimes K, M)&\cong \coh(R,M)^K &&\text{ and } &\coh(R\rtimes K,\ku)&\cong \coh(R,\ku)^K. 
\end{align*}
Then, Lemma \ref{lemma:hilbert-invariants} implies that $\coh(R\rtimes K, M)$ is finitely generated over $\coh(R\rtimes K,\ku)$.
 \epf

The following result is a straightforward consequence of Theorem~\ref{thm:K-semisimple-R-fgc}. 

\begin{cor}
Let $K$ be a semisimple Hopf algebra and 
$R$ a finite-dimensional  Hopf algebra in $\yd{K}$.
If $R$ admits a deformation sequence,
then the bosonization $R \# K$ has fgc.
\end{cor}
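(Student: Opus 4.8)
The plan is to deduce the corollary from Theorem \ref{thm:K-semisimple-R-fgc} by recognizing that, as augmented algebras, the bosonization $R \# K$ and the smash product $R \rtimes K$ are one and the same. Recall from the discussion of bosonizations that $R \# K$ is, by construction, the vector space $R \otimes K$ equipped with the smash product multiplication; it differs from $R \rtimes K$ only in its comultiplication. Since the cohomology ring $\coh(-,\ku)$ and the modules $\coh(-,M)$ are computed as $\Ext$ groups over the underlying algebra relative to the trivial module determined by the augmentation, the fgc property depends only on the augmented algebra structure. Hence it suffices to verify that the two augmented algebra structures agree and then to invoke the theorem.

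First I would check that $R$ is an augmented $K$-module algebra in the sense required. As $R$ is a Hopf algebra in $\yd{K}$, its counit $\epsilon_R$ is a morphism in $\yd{K}$; because the unit object of $\yd{K}$ carries the trivial $K$-action, this says precisely that $\epsilon_R(h \cdot x) = \epsilon_K(h)\,\epsilon_R(x)$ for all $h \in K$ and $x \in R$, which is the defining condition of an augmented $K$-module algebra. The counit of the Hopf algebra $R \# K$ is given on $x \otimes h$ by $\epsilon_R(x)\,\epsilon_K(h)$, so it coincides with the augmentation of $R \rtimes K$ induced by $\epsilon_R$ and $\epsilon_K$. Thus $R \# K$ and $R \rtimes K$ agree as augmented algebras.

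Finally, since $K$ is semisimple and $R$ is a finite-dimensional augmented $K$-module algebra admitting a deformation sequence, Theorem \ref{thm:K-semisimple-R-fgc} yields that $R \rtimes K$ has fgc; the identification above then transfers this to $R \# K$. The only point requiring any care is the matching of augmentations, but this is immediate from the explicit formula for the counit of the bosonization, so no substantial obstacle arises. In short, the corollary is the special case of Theorem \ref{thm:K-semisimple-R-fgc} where the augmented $K$-module algebra $R$ happens to carry a Yetter--Drinfeld Hopf algebra structure, and the bosonization records exactly the smash product at the level of augmented algebras.
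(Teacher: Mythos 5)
Your proof is correct and takes exactly the route the paper intends: the paper states this corollary as a ``straightforward consequence'' of Theorem \ref{thm:K-semisimple-R-fgc}, and your verification that $R\#K$ coincides with $R\rtimes K$ as augmented algebras (with $\epsilon_R$ a $K$-module algebra augmentation because it is a morphism in $\yd{K}$, and fgc depending only on the augmented algebra structure) is precisely the routine identification being left implicit.
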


\section{Equivariant deformation sequences}\label{sec:equivariant-deformation}
In this section, we will closely follow the approach in \cite{negron}, where instead of a finite group scheme $G$, we will consider a finite-dimensional Hopf algebra $K$. Some results will hold without the assumption of the cocommutativity of $K$.

\subsection{Equivariant deformation sequences} We look at deformation sequences that preserve the action of a finite-dimensional Hopf algebra $K$.

\begin{definition}\label{def:equiv-defor-seq} \cites{negron-pevtsova, negron}
Let $K$ be a finite-dimensional Hopf algebra. A deformation sequence 
$Z \overset{\iota}{\hookrightarrow} Q \overset{\pi}{\twoheadrightarrow}  R$ 
is \emph{$K$-equivariant}
if  $Z$, $Q$ and $R$ are $K$-module algebras and $\iota$ and $\pi$ 
are $K$-module algebra maps. 
In this case, we say that  $R$ admits a \emph{$K$-equivariant deformation sequence}.

Analogously, a formal deformation sequence 
$\widehat{Z} \overset{\widehat{\iota}}{\hookrightarrow} \widehat{Q} \overset{\widehat{\pi}}{\twoheadrightarrow}  R$ is \emph{$K$-equivariant}
if  $\widehat{Z}$, $\widehat{Q}$ and $R$ are $K$-module algebras and
$\iota$ and $\pi$ are $K$-module algebra maps. 
In this case, we say that  $R$ 
admits
an \emph{$K$-equivariant formal deformation sequence}.
\end{definition}

\begin{remark}
Instead of smoothness of $\Spec Z$ imposed in Definition \ref{def:defor-seq} \ref{item:deformation-Z},
it is just required in \cites{negron,negron-pevtsova}
that  $\epsilon_Z$ is a nonsingular point of $\Spec Z$. 
\end{remark}

The following lemma, an extension of Lemma \ref{lem:formal-deformation}, shows that 
we may turn every $K$-equivariant deformation sequence for $R$ into  a formal one.

\begin{lemma}\label{lem:formal-deformation-equivariant} \cite{negron}*{Proposition 4.3} 
Let $K$ be a finite-dimensional Hopf algebra.  Suppose that
$Z \overset{\iota}{\hookrightarrow} Q \overset{\pi}{\twoheadrightarrow}  R$
is a $K$-equivariant deformation sequence. Then 
\[\widehat{Z} \overset{\widehat{\iota}}{\hookrightarrow} \widehat{Q} \overset{\widehat{\pi}}{\twoheadrightarrow}  R\]
is a $K$-equivariant formal deformation sequence, 
where $\widehat{Z}$ is the completion of the localization $Z_{\mgo}$ with respect to the
$\mgo$-adic topology and $\widehat{Q} \coloneqq \widehat{Z} \otimes_{Z} Q$
for $\mgo = Z^+$.  
\end{lemma}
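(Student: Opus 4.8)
The plan is to treat the non-equivariant content and the $K$-equivariance separately. Lemma \ref{lem:formal-deformation} already shows that $\widehat{Z} \overset{\widehat{\iota}}{\hookrightarrow} \widehat{Q} \overset{\widehat{\pi}}{\twoheadrightarrow} R$ is a formal deformation sequence, so the only new task is to produce $K$-module algebra structures on $\widehat{Z}$ and $\widehat{Q}$ for which $\widehat{\iota}$ and $\widehat{\pi}$ are $K$-linear. The one genuine subtlety is that localization at $\mgo$ cannot be made $K$-equivariant directly: the elements of $Z \smallsetminus \mgo$ that are inverted are not individually $K$-fixed. I would avoid this by computing the completion as an inverse limit of finite quotients. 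Since $\mgo = Z^+$ is maximal, $Z_\mgo/(\mgo Z_\mgo)^n \cong Z/\mgo^n$, so $\widehat{Z} \cong \varprojlim_n Z/\mgo^n$ and likewise $\widehat{Q} \cong \varprojlim_n Q/\mgo^n Q$, reconciling with the localization-completion of Lemma \ref{lem:formal-deformation}.

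First I would verify that $\mgo$ is $K$-stable. Both $\iota(Z)$ and $\ker\pi = \mgo Q$ are $K$-submodules of $Q$ (the latter because $\pi$ is $K$-linear), and $\iota(Z) \cap \ker\pi = \iota(\mgo)$; equivalently, the $K$-linear algebra map $\pi\iota \colon Z \to R$ equals $u_R\,\epsilon_Z$ and so has kernel $\mgo$. Hence $\mgo$ is a $K$-submodule of $Z$. Because $Z$ and $Q$ are $K$-module algebras, the product of two $K$-submodules is $K$-stable, since $h \cdot (xy) = (h_{(1)} \cdot x)(h_{(2)} \cdot y)$; therefore every $\mgo^n$ and every $\mgo^n Q$ is $K$-stable, the quotients $Z/\mgo^n$ and $Q/\mgo^n Q$ are $K$-module algebras, and the transition maps are $K$-linear.

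Next I would pass to the limit. The forgetful functor from $K$-modules to vector spaces creates inverse limits, so $\widehat{Z} \cong \varprojlim_n Z/\mgo^n$ and $\widehat{Q} \cong \varprojlim_n Q/\mgo^n Q$ are $K$-modules whose multiplications are the limits of the $K$-equivariant multiplications, hence $K$-module algebras; the residue map $\epsilon_{\widehat{Z}} \colon \widehat{Z} \to \ku$ is $K$-linear because the $K$-action on $Z/\mgo = \ku$ is the trivial one $h \cdot 1 = \epsilon_K(h)1$. To match the definition $\widehat{Q} = \widehat{Z} \otimes_Z Q$, I would alternatively put on $\widehat{Z} \otimes_\ku Q$ the codiagonal action $h \cdot (a \otimes q) = h_{(1)} \cdot a \otimes h_{(2)} \cdot q$, making it a $K$-module algebra, and observe that the ideal defining the relative tensor product, generated by the $z \otimes 1 - 1 \otimes z$ with $z \in Z$, is $K$-stable since $h \cdot (z \otimes 1 - 1 \otimes z) = (h \cdot z) \otimes 1 - 1 \otimes (h \cdot z)$ by $h_{(1)}\epsilon_K(h_{(2)}) = h$ and the $K$-stability of $Z$. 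Then $\widehat{\iota}(a) = a \otimes 1$ and $\widehat{\pi}(a \otimes q) = \epsilon_{\widehat{Z}}(a)\,\pi(q)$ are $K$-linear by routine Sweedler manipulations (the second using $\sum \epsilon_K(h_{(1)})h_{(2)} = h$), and they are algebra maps by Lemma \ref{lem:formal-deformation}; since $\mgo$ annihilates $R$ we have $\widehat{R} \cong R$ as $K$-modules, completing the $K$-equivariant formal deformation sequence.

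I expect the main obstacle to be exactly the clash between localization and the $K$-action noted above, and, relatedly, checking that the inverse-limit description $\varprojlim_n Q/\mgo^n Q$ and the relative tensor product $\widehat{Z} \otimes_Z Q$ carry the same $K$-module algebra structure, so that the formal deformation sequence supplied by Lemma \ref{lem:formal-deformation} is indeed the one we have rendered $K$-equivariant. The remaining verifications are the standard bookkeeping of passing $K$-linearity through the exact completion functor and through the Hopf-algebraic diagonal action.
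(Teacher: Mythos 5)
Your proof is correct and follows essentially the same route as the paper: invoke Lemma \ref{lem:formal-deformation} for the non-equivariant structure, observe that $\mgo = \ker \epsilon_Z = \ker(\pi\iota)$ is $K$-stable because $\pi\iota$ is $K$-linear, and conclude that $\widehat{Z}$ and $\widehat{Q}$ inherit $K$-module algebra structures making $\widehat{\iota}$ and $\widehat{\pi}$ equivariant. The only difference is that where the paper delegates this last step to the citation of Proposition 4.3 of \cite{negron}, you prove it directly by realizing the completions as inverse limits of the $K$-module algebras $Z/\mgo^n$ and $Q/\mgo^n Q$, which correctly sidesteps the non-equivariance of localization.
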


\begin{proof}
By Lemma \ref{lem:formal-deformation},
we need only check equivariance
of the deformation sequence given there.
Since $\epsilon_Z=  \epsilon_R\circ \pi\circ \iota: Z\to \ku$ is $K$-linear, the maximal ideal $\mgo$ is $K$-stable. Hence, both $\widehat{Z}$ and $\widehat{Q}$ are $K$-module algebras 
(cf. \cite{negron}*{Proposition 4.3}), and the corresponding algebra maps between them are also $K$-linear. In particular, the unique maximal ideal $\mgo=(x_1,\ldots,x_n)$ in $\widehat{Z}$ is $K$-invariant. 
\end{proof}

\vspace{1ex}

\subsection{dg $K$-module algebras}
Let $K$ be a finite-dimensional Hopf algebra and denote by $\Rep K$ the tensor category of $K$-modules.

We recall here some properties of dg $K$-module algebras and later associate them to an equivariant deformation sequence to show the fgc property. We refer to \cite{yekutieli-book} for generalities on differential graded (dg) algebras.

\begin{definition}
We define dg objects
carrying an action of the Hopf algebra $K$.
\begin{itemize}
\item[(i)] A {\em dg $K$-module algebra}
is a dg algebra $A$ internal to $\Rep K$, i.e., a dg algebra $A$ 
which is also a $K$-module algebra such that all the dg algebra structure maps are $K$-linear.
\item[(ii)]
A {\em dg $K$-module $M$ over $A$} is a dg module over $A$ internal to $\Rep K$, i.e., a dg module $M$ over $A$ that is a $K$-module for which the dg module structure maps on $M$ are $K$-linear.
\item[(iii)]
Let 
$A\dgMod(\Rep K)$ denote the category of dg $K$-modules over $A$.
\end{itemize}
\end{definition}

\begin{remark} For a dg $K$-module algebra $A$, the smash product
$A \rtimes K$ becomes an ordinary dg algebra by setting $K$ to be concentrated in degree zero 
with zero differentials. Correspondingly, 
a dg $K$-module over $A$ is an ordinary dg module over $A \rtimes K$. In summary, we have an equivalence $A\dgMod(\Rep K)\simeq (A\rtimes K)\dgMod$.
In view of this, we denote by $D_\dg(A \rtimes K)$ the derived category of dg $K$-modules over $A$.
\end{remark}

We say that two $K$-module dg algebras $A,B$ are \emph{homotopically isomorphic} 
(see \cite{negron})
if there is a zig-zag of $K$-linear dg algebra quasi-isomorphisms 
\[
A\xleftarrow{\sim }  S_1\xrightarrow{\sim}S_2\xrightarrow{\sim } \cdots \xleftarrow{\sim}S_n\xrightarrow{\sim } B.
\]
Any homotopy isomorphism between two dg $K$-module algebras $A$ and $B$ provides 
a triangulated equivalence between their corresponding derived categories of dg modules:
\[D_\dg(A \rtimes K) \overset{\simeq}{\rightarrow}D_{\dg}(B \rtimes K)\]

Let $S$ be a dg $K$-module algebra, and  $M=\bigoplus_{i\in \mathbb Z} M_i$,
$N=\bigoplus_{i\in \mathbb Z}N_i$ two dg $K$-modules over $S$. Recall that 
\begin{align*}
\Hom_S(M,N)=\bigoplus_{i\in \mathbb Z}\Hom_{\gr  S}(M,N[i])
\end{align*}
where $\Hom_{\gr  S}(M,N[i])=\{f\in \Hom_S(M,N)\,|\, f(M_t)=N_{t+i}\}$ is a complex with differentials 
given by $(df)(m)=df(m)-(-1)^{|f|}f(dm)$. 
It is easy to see (cf. \cite{KKZ2009}*{Lemma 5.8}) 
that $\Hom_S(M,N)$ becomes a graded right $K$-module via 
\begin{align}\label{Kaction}
(f\cdot h)(m)=\sum \Ss(h\_1) \cdot f(h\_2\cdot m)
\end{align}
for any $f\in \Hom_S(M,N)$, $h\in K$, and $m\in M$, where $\Ss$ is the antipode of $K$. This $K$-action on $\Hom_S(M,N)$ is different from the one defined in \cite{negron}*{Section 2.5}. However, when $K$ is cocommutative, we can apply the antipode of $K$ to make $\Hom_S(M,N)$ into a left $K$-module which coincides with the module structure in \cite{negron}*{Section 2.5}.

Moreover, the image
$\RHom_S(M,N)$ of the right derived functor of $\Hom_S$ lies in $D(K)$, the derived category of $\Rep K$ \cite{negron}*{Lemma 2.3}, and thus extends to a functor
\[
\RHom_S(-,-)\colon D_\dg(S\rtimes K)^{\text{op}}\times D_\dg(S\rtimes K)\rightarrow D(K).
\]
It is direct to check that the composition map
\[\Hom_S(P,N)\times \Hom_S(M,P)\to \Hom_S(M,N)\] 
is $K$-linear for any dg $K$-modules $M$, $P$, and $N$ over $S$. Consequently, $\REnd_S(M)$ is a dg $K$-module algebra. 

\begin{lemma}\cite{negron}*{Lemma 2.4} \label{DG}
Let $K$ be a finite-dimensional Hopf algebra, $S$ a dg $K$-module algebra and $M$ a dg $K$-module over $S$. Then we have that:
\begin{enumerate}[leftmargin=7ex,label=\rm{(\roman*)}]
\item The dg $K$-module algebra $\REnd_S(M)$ is well-defined up to homotopy isomorphism. 
\item If $M$ and $N$ are isomorphic in $D_\dg(S\rtimes K)$, then $\REnd_S(M)$ and $\REnd_S(N)$ are homotopically isomorphic. 
\end{enumerate}  
\end{lemma}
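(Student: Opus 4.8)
The plan is to realize $\REnd_S(M)$ concretely as $\End_S(P)$ for a choice of K-projective (semifree) resolution $P\xrightarrow{\sim}M$ in the category of dg $(S\rtimes K)$-modules, equipped with the $K$-action \eqref{Kaction} and the $K$-linear composition product that, as recalled above, makes it a dg $K$-module algebra. Both assertions then reduce to one statement: \emph{if $f\colon P\to P'$ is a quasi-isomorphism between K-projective dg $(S\rtimes K)$-modules, then $\End_S(P)$ and $\End_S(P')$ are homotopically isomorphic as dg $K$-module algebras.} Indeed, for \rm{(i)} two K-projective resolutions of $M$ are linked by such an $f$ lifting $\id_M$, and for \rm{(ii)} an isomorphism $M\cong N$ in $D_\dg(S\rtimes K)$ lifts, by K-projectivity of $P$, to a quasi-isomorphism $f\colon P\to P'$ of resolutions; in either case $f$ is automatically a homotopy equivalence since source and target are K-projective. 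Throughout, the lifts, a homotopy inverse $g$, and the homotopies may be chosen $(S\rtimes K)$-linear, so that every map in sight is $K$-linear.

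First I would record the module-level statement for free: since $\RHom_S(-,-)$ is a functor into $D(K)$, an isomorphism $M\cong N$ already yields an isomorphism $\REnd_S(M)\cong\REnd_S(N)$ in $D(K)$, i.e.\ of dg $K$-modules. The actual content of the lemma is to upgrade this to a \emph{homotopy isomorphism of dg $K$-module algebras}, that is, a zig-zag of $K$-linear dg algebra quasi-isomorphisms. The difficulty is that $\End_S(-)$ is not functorial on homotopy equivalences: a homotopy equivalence $f\colon P\to P'$ does not induce any dg algebra map between $\End_S(P)$ and $\End_S(P')$, since conjugation by $f$ would require $f$ to be invertible on the nose.

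To build the zig-zag I would use the standard device of turning a homotopy equivalence into an isomorphism up to contractible summands. Concretely, setting $D=\operatorname{Cone}(\id_{P'})$ and $D'=\operatorname{Cone}(\id_P)$, which are contractible $K$-equivariant dg $(S\rtimes K)$-modules, the data $(f,g,\text{homotopies})$ assemble into an honest isomorphism $P\oplus D\cong P'\oplus D'$ of dg $(S\rtimes K)$-modules, inducing by conjugation an isomorphism of dg $K$-module algebras $\End_S(P\oplus D)\cong\End_S(P'\oplus D')$. It then remains to connect $\End_S(P)$ to $\End_S(P\oplus D)$, and symmetrically on the primed side. Here I would exploit the $K$-invariance of the identity maps $\id_P,\id_D$, which follows from \eqref{Kaction} together with the antipode axiom $\Ss(h\_1)h\_2=\epsilon(h)$, so that the associated projection idempotents are $K$-invariant and the Peirce decomposition of $\End_S(P\oplus D)$ is $K$-equivariant. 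Writing $\mathcal{P}$ for the block upper-triangular subalgebra $\left(\begin{smallmatrix}\End_S(P)&\Hom_S(D,P)\\0&\End_S(D)\end{smallmatrix}\right)$, I would check that the unital inclusion $\mathcal{P}\hookrightarrow\End_S(P\oplus D)$ and the unital projection $\mathcal{P}\twoheadrightarrow\End_S(P)$ are both quasi-isomorphisms, because $D$ contractible forces $\Hom_S(D,P)$, $\Hom_S(P,D)$ and $\End_S(D)$ to be acyclic. This produces the $K$-linear zig-zag $\End_S(P)\xleftarrow{\sim}\mathcal{P}\xrightarrow{\sim}\End_S(P\oplus D)\cong\End_S(P'\oplus D')\xleftarrow{\sim}\mathcal{P}'\xrightarrow{\sim}\End_S(P')$, establishing the key statement and hence \rm{(i)} and \rm{(ii)}.

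The main obstacle I anticipate is precisely this passage from the module-level isomorphism in $D(K)$ to an algebra-level homotopy isomorphism: one must produce \emph{strict} dg algebra quasi-isomorphisms out of a mere homotopy equivalence $f$, which is what the corner/triangular-algebra argument above is designed to do, and one must verify at each stage compatibility with the $K$-action of \eqref{Kaction}. The remaining verifications, namely acyclicity of the off-diagonal $\Hom$-complexes, $K$-invariance of the idempotents, and the existence of $K$-equivariant K-projective resolutions of $M$ and $N$, are routine.
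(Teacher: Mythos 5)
Your proposal is correct in substance, but it takes a genuinely different route from the paper: the paper's entire proof of Lemma \ref{DG} is a citation, observing that the proof of \cite{negron}*{Lemma 2.4}, written for a finite group scheme, never uses cocommutativity and so goes through verbatim once the action \eqref{Kaction} replaces the group-scheme action. You instead build the zig-zag by hand: model $\REnd_S(M)$ as $\End_S(P)$ for an equivariant semifree (homotopically projective) resolution $P$ of $M$ over $S\rtimes K$; reduce both (i) and (ii) to comparing $\End_S(P)$ and $\End_S(P')$ along an $(S\rtimes K)$-linear quasi-isomorphism $f$, which is automatically a homotopy equivalence with $(S\rtimes K)$-linear homotopy data; stabilize by contractible summands to turn $f$ into an honest equivariant isomorphism; and pass through the block upper-triangular subalgebra, whose inclusion and corner projection are $K$-linear dg algebra quasi-isomorphisms because $\Hom$'s into or out of a contractible dg module are acyclic. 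This is precisely the kind of argument the paper's citation hides, and its merit is that it makes visible exactly where only $K$-linearity (never cocommutativity) enters: the invariance of $\id_P$ and of the block idempotents under \eqref{Kaction} via $\Ss(h_{(1)})h_{(2)}=\epsilon(h)1$, the $K$-stability of the Peirce decomposition, and the equivariance of conjugation by an $(S\rtimes K)$-linear isomorphism.

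Two points need repair, though neither is fatal to the architecture. First, the stabilization step is misstated: with $D=\operatorname{Cone}(\id_{P'})$ and $D'=\operatorname{Cone}(\id_P)$ there is in general \emph{no} isomorphism $P\oplus D\cong P'\oplus D'$ (already for $S=K=\ku$ one can choose $P$, $P'$ of finite total dimension for which the two sides have different dimensions). The correct statement is $P\oplus\operatorname{Cone}(f)\cong\operatorname{Cyl}(f)\cong P'\oplus\operatorname{Cone}(\id_P)$: the mapping cylinder always splits off $P'$ with complement $\operatorname{Cone}(\id_P)$, and it splits off $P$ with complement $\operatorname{Cone}(f)$ because a degreewise split short exact sequence of dg $(S\rtimes K)$-modules with contractible quotient splits, and $\operatorname{Cone}(f)$ is contractible with $(S\rtimes K)$-linear contraction exactly because $f$ is an equivariant homotopy equivalence. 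So take $D=\operatorname{Cone}(f)$, $D'=\operatorname{Cone}(\id_P)$; the rest of your argument is unchanged. Second, you silently use that $\End_S(P)$, with $P$ resolved over $S\rtimes K$, computes $\REnd_S(M)$; this requires that restriction along $S\to S\rtimes K$ preserves homotopical projectivity, which holds here because $S\rtimes K$ is a free left $S$-module of finite rank $\dim K$, so coinduction $\Hom_S(S\rtimes K,-)$ preserves acyclicity and the adjunction passes to homotopy categories. With these two fixes your write-up is a sound, self-contained expansion of the paper's one-line proof.
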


\begin{proof}
The statement in \cite{negron}*{Lemma 2.4} considers a finite group scheme $G$, however, the proof does not require cocommutativity. 
\end{proof}

\vspace{1ex}

\subsection{Constructing dg $K$-module algebras associated to a deformation sequence}\label{subsec:constructing}

This subsection summarizes the construction of certain $K$-equivariant dg algebras associated to an equivariant deformation sequence that were used in \cite{negron} to prove fgc for Drinfeld doubles of finite group schemes. This can be considered as a $K$-equivariant version of the dg algebra tools from \cite{negron-pevtsova} employed to show fgc for augmented algebras that admit a (non-equivariant) deformation sequence. We will rely on these tools in Section \ref{sec:fgc_smash} to prove the fgc property for the smash product $R\rtimes K$ when the augmented algebra $R$ admits a $K$-equivariant deformation sequence.

It is important to point out that in order to modify the dg resolution from \cite{negron-pevtsova} to the $K$-equivariant setting, we need $K$ to be cocommutative where its representation category has a symmetric braiding given by the usual flipping map.

Throughout this subsection we consider the following setting: let $K$ be a finite-dimensio\-nal cocommutative Hopf algebra, $R$ a finite-dimensional augmented $K$-module algebra and
\begin{align*}
Z\hookrightarrow Q\twoheadrightarrow R
\end{align*}
a $K$-equivariant formal deformation sequence such that $Z=\ku[[x_1,\cdots,x_n]]$ with $K$-inva\-riant maximal ideal $\mgo=(x_1,\ldots,x_n)$.
\begin{lemma}\cite{negron}*{Lemma 3.1 and Lemma 5.2}\label{dgK}
There is a dg $K$-module algebra $\mathcal K$ satisfying:
\begin{enumerate} [leftmargin=7ex,label=\rm{(\roman*)}]
\item $\mathcal K$ is a $Z$-algebra that is finite and flat over $Z$.

\medbreak
\item $\mathcal K$ is homotopically isomorphic to $\ku$ and $\ku\otimes_Z\mathcal K$ is homotopically isomorphic to $B_Z$, the exterior algebra on the cotangent space $\mgo/ \mgo^2$ 
shifted to degree $-1$ with zero differentials.

\medbreak
\item Consider $\mathcal K$ as a $Z$-central dg $\mathcal K$-bimodule in $D(\mathcal K\otimes_Z  \mathcal K^{\text{op}})$. Then the dg $K$-module algebra $\REnd_{\mathcal K\otimes_Z\mathcal K^{\op}}(\mathcal K)$ is homotopically isomorphic to $A_Z$, the symmetric algebra on the tangent space $\mgo/ \mgo^2$ shifted to degree $2$ with zero differentials.

\medbreak
\item The dg $K$-module algebra $\mathcal K_Q \coloneqq Q\otimes_Z\mathcal K$ is homotopically isomorphic to $R$ and finite and flat over $Z$. 

\end{enumerate}
\end{lemma}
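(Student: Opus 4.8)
The statement to be proved is Lemma \ref{dgK}, which asserts the existence of a dg $K$-module algebra $\mathcal K$ enjoying four properties (i)--(iv) relative to a $K$-equivariant formal deformation sequence $Z \hookrightarrow Q \twoheadrightarrow R$ with $Z = \ku[[x_1,\dots,x_n]]$. Since the paper cites \cite{negron}*{Lemma 3.1 and Lemma 5.2} and the ambient hypothesis is that $K$ is \emph{cocommutative}, the plan is to take the explicit construction of $\mathcal K$ from the non-equivariant deformation theory of \cite{negron-pevtsova}, namely a Koszul-type resolution of $\ku$ as a $Z$-module, and to verify that every map in that construction can be made $K$-linear because $\Rep K$ is a symmetric tensor category when $K$ is cocommutative. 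First I would set $\mathcal K$ to be the Koszul complex $\mathcal K = Q \otimes_Z \Lambda(\mgo/\mgo^2)^{\vee}$-type object built from the regular sequence $x_1,\dots,x_n$ generating the $K$-invariant maximal ideal $\mgo$; concretely, one forms the Koszul resolution of $\ku = Z/\mgo$ over $Z$ and observes that, since $\mgo$ is $K$-stable and $Z$ is a $K$-module algebra, the cotangent space $\mgo/\mgo^2$ carries an induced $K$-action, so the exterior algebra on it is an object of $\Rep K$.

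\textbf{Verifying (i) and (ii).}
For (i), flatness and finiteness of $\mathcal K$ over $Z$ is immediate from the fact that the Koszul complex on a regular sequence consists of free $Z$-modules of finite rank; the $K$-equivariance is built in by construction. For (ii), the Koszul complex is a resolution of $\ku$ over $Z$, so $\mathcal K \simeq \ku$ as dg algebras, and this quasi-isomorphism is $K$-linear since all the differentials are contraction maps against the $K$-invariant elements $x_i$. The identification $\ku \otimes_Z \mathcal K \cong B_Z$, the exterior algebra on $\mgo/\mgo^2$ shifted to degree $-1$ with zero differential, follows by reducing the Koszul differentials modulo $\mgo$, which kills them; the $K$-equivariance of this identification is exactly the point where I would invoke that $\mgo/\mgo^2$ is a $K$-submodule quotient, so its exterior algebra is a $K$-module algebra in the symmetric category $\Rep K$.

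\textbf{Verifying (iii) and (iv).}
For (iv), set $\mathcal K_Q \coloneqq Q \otimes_Z \mathcal K$; since $\mathcal K \simeq \ku$ over $Z$ and $R \cong Q \otimes_Z \ku$ by the deformation-sequence axiom \ref{item:ker-pi-Z}, base change gives $\mathcal K_Q \simeq R$, and finiteness and flatness over $Z$ are inherited from those of $Q$ and $\mathcal K$. The map $Q \to \mathcal K_Q$ and the quasi-isomorphism to $R$ are $K$-linear because all tensor products are taken over the central $K$-stable subalgebra $Z$ and $Q$ is a $K$-module algebra. Property (iii) is the most delicate: one computes $\REnd_{\mathcal K \otimes_Z \mathcal K^{\op}}(\mathcal K)$ and identifies it, up to homotopy isomorphism, with $A_Z$, the symmetric algebra on the tangent space $\mgo/\mgo^2$ in degree $2$. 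Here the standard (non-equivariant) computation realizes this self-$\Ext$ as the Koszul-dual symmetric algebra; the $K$-equivariant upgrade uses that $\REnd_S(M)$ is a dg $K$-module algebra (established earlier in the excerpt, with the $K$-action \eqref{Kaction}), together with Lemma \ref{DG} guaranteeing it is well-defined up to homotopy isomorphism, and crucially that the symmetric braiding of $\Rep K$ for cocommutative $K$ is compatible with the bimodule structure so that the Koszul duality quasi-isomorphisms remain $K$-linear.

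\textbf{The main obstacle.}
The hard part is property (iii): ensuring that the Koszul-duality identification of $\REnd_{\mathcal K \otimes_Z \mathcal K^{\op}}(\mathcal K)$ with the symmetric algebra $A_Z$ is genuinely $K$-equivariant, rather than merely an isomorphism of dg algebras. This is precisely where cocommutativity of $K$ is indispensable: the construction of the $\mathcal K$-bimodule structure and of the comparison map involves the braiding $c_{M,N}$ on $\Rep K$, and one needs this braiding to be symmetric (the ordinary flip) so that the $Z$-centrality of $\mathcal K$ as a bimodule is preserved and the action formula \eqref{Kaction} interacts correctly with the multiplication on $\REnd$. I would therefore devote the bulk of the argument to tracking $K$-linearity through the bimodule $\REnd$ computation, invoking Lemma \ref{DG} to reduce the verification to a single convenient model of $\mathcal K$ and then checking equivariance of the explicit Koszul-dual generators in degree $2$; the remaining properties (i), (ii), (iv) are formal consequences of base change along the $K$-stable central subalgebra $Z$.
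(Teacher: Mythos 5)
Your overall architecture (a Koszul complex over $Z$, base change along $Z\to Q$ for (iv), Koszul duality for (iii), cocommutativity to have a symmetric braiding on $\Rep K$) is the right shape, and it mirrors the construction the paper points to. Note, however, that the paper does not reconstruct any of this: its proof of Lemma \ref{dgK} consists entirely of citations to \cite{negron}*{Lemma 3.1} for (i)--(ii), \cite{negron}*{Lemma 5.2} for (iii), and \cite{negron}*{Section 5.1} for (iv), plus the one-line observation that $\mathcal K_Q=Q\otimes_Z\mathcal K$ is finite and flat over $Z$ because $Q$ is. Your attempt to actually reprove the cited lemmas founders on exactly the point that makes them non-trivial.

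The gap is your claim that $K$-equivariance of the Koszul complex is ``built in by construction,'' justified by calling $x_1,\dots,x_n$ ``$K$-invariant elements.'' The standing hypothesis is only that the \emph{ideal} $\mgo=(x_1,\dots,x_n)$ is $K$-stable; the individual generators, and even their linear span, need not be preserved by $K$, and they are certainly not invariant elements (if $h\cdot x_i=\epsilon(h)x_i$ for all $i$, then $K$ would act trivially on $\mgo/\mgo^2$, which is false in general). For the Koszul complex $\Lambda_Z(\Sigma V)$ with $V\cong\mgo/\mgo^2$ to carry the $K$-action you describe---one induced by a $K$-action on the span of the degree $-1$ generators---you need a $K$-equivariant section of the surjection $\mgo\twoheadrightarrow\mgo/\mgo^2$, and for non-semisimple $K$ (the only case of interest here, since semisimple $K$ is already covered by Theorem \ref{thm:K-semisimple-R-fgc}) this is obstructed by a class in $\Ext^1_K(\mgo/\mgo^2,\mgo^2)$ and can genuinely fail. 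For instance, let $\car\ku=p$, $K=\ku[t]/(t^p)$ with $t$ primitive, and let $K$ act on $Z=\ku[[x,y]]$ by the derivation $t\mapsto x^2\partial_y$ (so $t^p\mapsto 0$); then $\mgo=(x,y)$ is $K$-stable, $Z\hookrightarrow Z\twoheadrightarrow\ku$ is a $K$-equivariant formal deformation sequence, the induced action on $\mgo/\mgo^2$ is trivial, but the $K$-invariants of $\mgo$ are $x\ku[[x]]$, so no equivariant lift of $\bar y$ exists. Any $K$-structure on the resolution must therefore be of a subtler, $Z$-semilinear kind (e.g.\ with $t\cdot\xi_y$ a $Z$-combination of the $\xi_i$ rather than an element of their span), and constructing such a structure---and checking it is compatible with the Hopf relations of $K$---is precisely the content of \cite{negron}*{Lemma 3.1} that the paper cites rather than reproves. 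Since your verifications of (ii), (iii) and (iv) all run through this allegedly automatic equivariant model, the proposal as written does not establish the lemma.
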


\begin{proof}
The construction of the dg $K$-module algebra $\mathcal K$ and its properties stated in (i) and (ii) are provided in \cite{negron}*{Lemma 3.1}. 
Property (iii) is \cite{negron}*{Lemma 5.2}. For (iv), the construction of $\mathcal K_Q=Q\otimes_Z\mathcal K$ and the fact that it is homotopically isomorphic to $R$ are explained in \cite{negron}*{Section 5.1}. Since $Q$ is finite and flat over $Z$, it is clear that $\mathcal K_Q$ is finite and flat over $Z$ by (i). 
\end{proof}

Consider the dg $K$-module algebras $\mathcal K$ and $\mathcal K_Q=\mathcal K\otimes_Z Q$ from Lemma \ref{dgK} that are homotopically isomorphic to $\ku$ and $R$, respectively. In \cite{negron}*{Section 5.2}, the authors construct a functor 
\[ \mathfrak{def}^K: D_{\dg}((\mathcal K\otimes_Z\mathcal K^{\op})\rtimes K)
\rightarrow
D_{\dg}(\mathcal K_Q\rtimes K)\]
from the derived  category of dg $K$-modules over $\mathcal K\otimes_Z\mathcal K^{\op}$ to that of dg $K$-modules over $\mathcal K_Q$. In particular,  for any dg $K$-module $M$ over $\mathcal K_Q$, there is a dg $K$-module algebra map 
\begin{multline}\label{eq:action_of_S}
\mathfrak{def}^K_M: \quad \mathcal T\coloneqq\REnd_{\mathcal K\otimes_Z\mathcal K^{\op}}(\mathcal K)\xrightarrow{-\otimes^L_ZQ} \REnd_{\mathcal K_Q\otimes_Q\mathcal K^{\op}_Q}(M)
\\
\longrightarrow
\REnd_{\mathcal K_Q\otimes \mathcal K^{\op}_Q}(M)\xrightarrow{-\otimes^L_{\mathcal K_Q}M} \REnd_{\mathcal K_Q}(M).
\end{multline}
Moreover, for any dg $K$-modules $M$ and $N$ over $\mathcal K_Q$, the dg $K$-module algebra $\mathcal T$ acts on the dg $K$-module $\RHom_{\mathcal K_Q}(M,N)$ via either $\mathfrak{def}^K_M$ or $\mathfrak{def}^K_N$ with actions coinciding. The following summarizes the finite generation property derived from a $K$-equivariant formal deformation sequence, which is an equivariantization of 
\cite{negron-pevtsova}*{Corollary 4.7}.

\begin{lemma}\cite{negron}*{Theorem 5.4}\label{fgcR}
Retain the above notation. Let $M,N$ be two dg $K$-modules over $\mathcal K_Q$ such that $\coh(M)$ and $\coh(N)$ are finite-dimensional. Then
\begin{enumerate} [leftmargin=7ex,label=\rm{(\roman*)}]
\item $\coh(\mathfrak{def}^K_M): \coh(\mathcal T)\to \coh(\REnd_{\mathcal K_Q}(M))$ is a finite map of graded dg $K$-module algebras.
\item $\coh(\RHom_{\mathcal K_Q}(M,N))$ is a finitely generated module over $\coh(\mathcal T)$.
\end{enumerate}
\end{lemma}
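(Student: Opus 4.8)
The plan is to transport the Negron--Pevtsova argument proving Theorem~\ref{thm:negron-pevtsova} (together with the equivalence between \ref{item:ExtQ} and \ref{item:ExtR}) into the symmetric monoidal category $\Rep K$; this is exactly what cocommutativity of $K$ provides, since it makes the braiding on $\Rep K$ the ordinary flip and hence lets one form the derived tensor products and the Koszul-dual pair $A_Z$, $B_Z$ as objects internal to $\Rep K$. First I would record, using Lemma~\ref{dgK}(iii), that $\coh(\mathcal{T})\cong A_Z$ as graded $K$-module algebras, so that $\coh(\mathcal{T})$ is a Noetherian polynomial algebra concentrated in even degrees; both assertions are then finiteness statements over this polynomial ring. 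Moreover, since the $\mathcal{T}$-action on $\coh(\RHom_{\mathcal{K}_Q}(M,N))$ factors through $\mathfrak{def}^K_M$, statement (i) is precisely the diagonal case $N=M$ of (ii); so it suffices to prove that $\coh(\RHom_{\mathcal{K}_Q}(M,N))$ is a finitely generated $A_Z$-module for all such $M,N$.

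The heart of the proof is the Koszul-duality transfer between $A_Z$ and $B_Z$, carried out $K$-equivariantly; this is the internalization to $\Rep K$ of the equivalence between \ref{item:ExtQ} and \ref{item:ExtR} (\cite{negron-pevtsova}*{Theorem~4.3}). The formal deformation over the disk $\Spec Z=\Spec\ku[[x_1,\dots,x_n]]$ equips $\coh(\RHom_{\mathcal{K}_Q}(M,N))$ with the degree-two action of the tangent directions, that is, with its $A_Z$-module structure through $\mathfrak{def}^K_M$. By a graded Nakayama argument over the connected Noetherian polynomial ring $A_Z$, finiteness of this module is detected by the finite-dimensionality of its Koszul homology; and the Koszul duality between $A_Z$ and the finite-dimensional exterior algebra $B_Z$ identifies that Koszul homology with a computation carried out over $\mathcal{K}$, through the homotopy isomorphisms $\mathcal{K}\simeq\ku$ and $\ku\otimes_Z\mathcal{K}\simeq B_Z$ together with the finite flatness of $\mathcal{K}_Q$ over $Z$ (Lemma~\ref{dgK}(i),(ii),(iv)). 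I would re-run the proof of \cite{negron-pevtsova}*{Theorem~4.3} while observing that every functor and comparison map involved is defined internally to $\Rep K$.

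It then remains to supply finiteness on the $\mathcal{K}$-side, and here the finite global dimension of $Q$, built into $\mathcal{K}$ via Lemma~\ref{dgK}, is the essential input. Since $\mathcal{K}$ is homotopically isomorphic to $\ku$ and both $\coh(M)$ and $\coh(N)$ are finite-dimensional, the complex computing the $\mathcal{K}$-side has finite-dimensional total cohomology; as the exterior algebra $B_Z$ is itself finite-dimensional, this is automatically finite over $B_Z$. Transporting this finiteness back through the equivalence of the previous paragraph shows that $\coh(\RHom_{\mathcal{K}_Q}(M,N))$ is a finitely generated $A_Z$-module, which is (ii). Specializing to $N=M$, the map $\coh(\mathfrak{def}^K_M)\colon\coh(\mathcal{T})\to\coh(\REnd_{\mathcal{K}_Q}(M))$ exhibits its target as a finite $\coh(\mathcal{T})$-module, i.e.\ it is a finite map of graded dg $K$-module algebras, which is (i).

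The step I expect to be the main obstacle is the $K$-equivariant Koszul-duality transfer of the second paragraph. Non-equivariantly it is \cite{negron-pevtsova}*{Theorem~4.3}, but to keep the whole argument inside $\Rep K$ one must check that the base-change identifications, the dg (or $A_\infty$-) Koszul duality between $A_Z$ and $B_Z$, and the comparison of $\coh(\RHom_{\mathcal{K}_Q}(M,N))$ with its reduction over $\mathcal{K}$ are all implemented by $K$-linear maps of dg $K$-module (bi)algebras. This is exactly where the cocommutativity of $K$ is indispensable, ensuring that $A_Z$, $B_Z$, and the relevant derived tensor and Hom functors live in the symmetric monoidal category $\Rep K$; one also leans here on the compatibility of $\mathfrak{def}^K$ with $\RHom$ recorded before the statement. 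The residual finiteness bookkeeping over the Noetherian ring $A_Z$ is then routine.
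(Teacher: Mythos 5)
Your outline is sound and, if carried out, would prove the lemma; but it does far more work than the paper, whose entire proof is a citation: a finite-dimensional cocommutative Hopf algebra is precisely the group algebra of a finite group scheme, so the statement \emph{and proof} of \cite{negron}*{Theorem 5.4} apply verbatim, with nothing to re-derive. Beyond that, there is a genuine simplification you are missing. Both (i) and (ii) are finiteness assertions, and finiteness of a graded algebra map, or of a graded module, is a property of the underlying non-equivariant objects; the clause ``of graded dg $K$-module algebras'' in (i) is part of the construction, not something to be proved here. Since, by construction \eqref{eq:action_of_S}, the map $\mathfrak{def}^K_M$ forgets to the Negron--Pevtsova deformation map attached to the underlying non-equivariant formal deformation sequence $Z\hookrightarrow Q\twoheadrightarrow R$, one may check both claims after forgetting the $K$-action and quote the non-equivariant results of \cite{negron-pevtsova} underlying Theorem \ref{thm:negron-pevtsova}: finite global dimension of $Q$ forces the cohomology on the $\mathcal K$-side to be bounded and finite-dimensional, hence finite over $B_Z$, and Koszul duality converts this into $A_Z$-finiteness. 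In other words, the step you single out as the main obstacle --- re-running the Koszul duality of \cite{negron-pevtsova}*{Theorem 4.3} internally to $\Rep K$ --- is not needed for this lemma; cocommutativity is consumed in constructing $\mathcal K$, $\mathcal K_Q$, $\mathcal T$, and the $K$-linear map $\mathfrak{def}^K_M$ (Lemma \ref{dgK} and the discussion preceding the statement), and that $K$-linearity only becomes essential later, when Theorem \ref{theorem:negron} passes to invariants via Proposition \ref{intP}. Your reduction of (i) to the diagonal case $N=M$ of (ii) and your identification $\coh(\mathcal T)\cong A_Z$ via Lemma \ref{dgK}(iii) are both correct; the one imprecision is attributing finite-dimensionality of the $\mathcal K$-side cohomology to $\mathcal K\simeq \ku$ and finite-dimensionality of $\coh(M)$ and $\coh(N)$ alone, when the required vanishing in high degrees comes from the finite global dimension of $Q$ --- which you do eventually invoke, but as a side remark rather than as the load-bearing input.
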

\begin{proof}
Similarly to Lemma \ref{dgK}, the statement and proof appear in \cite{negron}*{Theorem 5.4} in terms of a finite group scheme $G$ instead of the cocommutative Hopf algebra $K$.
\end{proof}

\vspace{1ex}

\subsection{Alternative formulations of fgc}\label{subsec:alternatives-fgc}
In this subsection, $K$ is a finite-dimensional Hopf algebra.
Lemma \ref{lem:fgc=hfg} is the starting point for the following characterizations of the fgc property.
Notice that the equivalence of the first four items was established in \cite{NWW2021}*{Lemma 3.3} while the equivalence with the last item is implicit in \cite{negron}.

\begin{prop}\label{intP}
If $K$ is power reductive, then the following are equivalent:
\begin{enumerate}[leftmargin=7ex,label=\rm{(\roman*)}]
\item\label{item:alternative1} $K$ has fgc. 

\medbreak
\item\label{item:alternative2} $\coh(K, M)$ is a Noetherian $\coh(K, R)$-module, 
for any affine commutative $K$-module
algebra $R$ and any finitely generated $(R \rtimes K)$-module $M$.

\medbreak
\item\label{item:alternative3} $\coh(K, R)$ is finitely generated for any affine commutative $K$-module algebra $R$.

\medbreak
\item\label{item:alternative4} Let $T$ be a finitely generated non-negatively graded Noetherian $K$-module algebra
and let $M$ be a finitely generated $(T\rtimes K)$-module. 
Then $\coh(K, M)$ is Noetherian over $\coh(K, T)$
whenever $T$ is module finite over a graded central $K$-module subalgebra $Z$.

\medbreak
\item\label{item:alternative5} 
Let $S$ be a dg $K$-module algebra and $M$ be a dg $K$-module over $S$. 
Assume that

\smallbreak
\begin{enumerate}[label=\rm{(\alph*)}]
\item $\coh(S)$ is affine graded-commutative and concentrated in non-negative degrees,

\smallbreak
\item $\coh(M)$ is finitely generated over $\coh(S)$, and

\smallbreak
\item $S$ is homotopically isomorphic to its cohomology ring $\coh(S)$.
\end{enumerate}

\medbreak
\noindent
 Then the cohomology $\coh(\RHom_K(\ku, M))$ is Noetherian over 
 $\coh(\RHom_K(\ku, S))$.
\end{enumerate}
\end{prop}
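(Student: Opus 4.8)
The plan is to take the equivalence of (i)--(iv) for granted from \cite{NWW2021}*{Lemma 3.3} and to close the loop by establishing the two implications (v)$\Rightarrow$(i) and (iv)$\Rightarrow$(v); combined with (i)$\Leftrightarrow$(iv) these give the equivalence of all five items. The power reductivity hypothesis then enters only through the cited equivalence of the first four.

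First I would prove (v)$\Rightarrow$(i) by a direct specialization. Take $S=\ku$, viewed as a dg $K$-module algebra concentrated in degree $0$ with zero differential, and let $M$ be an arbitrary finite-dimensional $K$-module placed in degree $0$. Then $\coh(S)=\ku$ is trivially affine, graded-commutative and concentrated in non-negative degrees, the formality hypothesis (c) is automatic, and $\coh(M)=M$ is finite-dimensional, hence finitely generated over $\coh(S)=\ku$, so (b) holds. Since $\coh(\RHom_K(\ku,M))=\coh(K,M)$ and $\coh(\RHom_K(\ku,S))=\coh(K,\ku)$, the conclusion of (v) says exactly that $\coh(K,M)$ is Noetherian over $\coh(K,\ku)$ for every finite-dimensional $M$, which is fgc by Lemma \ref{lem:fgc=hfg}.

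The substantive direction is (iv)$\Rightarrow$(v). Write $T\coloneqq\coh(S)$. By the formality hypothesis (c), $S$ is homotopically isomorphic to $T$ with zero differential, so $\RHom_K(\ku,S)\simeq\RHom_K(\ku,T)$; because $T$ carries the zero differential, the latter splits over the internal grading and yields an identification of algebras
\[
\coh(\RHom_K(\ku,S))\cong\coh(K,T)
\]
compatible with module structures. By (a), the degree-zero part $T^0$ is commutative and Noetherian and $T$ is $T^0$-affine, so Lemma \ref{lema:graded-Noetherian}\ref{item:A-Noeth-affine} shows that $T$ is Noetherian and module finite over the graded-central $K$-module subalgebra $Z\coloneqq T^{\text{even}}$ (which is $K$-stable since the grading is). Thus $T$ meets the hypotheses of (iv). Applying (iv) to the $(T\rtimes K)$-module $T$ itself shows that $\coh(K,T)$ is a Noetherian ring, while applying it to the finitely generated $(T\rtimes K)$-module $\coh(M)$ (finitely generated over $T$ by (b)) shows that $\coh(K,\coh(M))$ is a Noetherian $\coh(K,T)$-module.

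To pass from $\coh(M)$ to $M$ I would invoke the hyper-Ext spectral sequence
\[
E_2^{p,q}=\Ext_K^{p}(\ku,\coh^{q}(M))\;\Longrightarrow\;\coh^{p+q}(\RHom_K(\ku,M)),
\]
which is a spectral sequence of modules over the analogous spectral sequence for $S$; the latter degenerates at $E_2$ with $E_2=E_\infty=\coh(K,T)$ by formality. Hence every page $E_r$ is a $\coh(K,T)$-module and, being a subquotient of $E_2=\coh(K,\coh(M))$, is Noetherian over the Noetherian ring $\coh(K,T)$; in particular $E_\infty$ is Noetherian. Since $\coh(M)$ is bounded below, in each total degree only finitely many $(p,q)$ contribute, so the spectral sequence converges strongly and the induced filtration on the abutment is degreewise finite with associated graded $E_\infty$; therefore $\coh(\RHom_K(\ku,M))$ is Noetherian over $\coh(K,T)\cong\coh(\RHom_K(\ku,S))$, which is (v). The hard part will be this last step: one must check that the spectral sequence is genuinely one of $\coh(K,T)$-modules — so that the differentials are linear and the degeneracy of the $S$-spectral sequence transfers the module structure cleanly — and that the degreewise-finite filtration lets Noetherianity of $E_\infty$ lift to the abutment.
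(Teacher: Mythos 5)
Your proposal is correct and takes essentially the same route as the paper: (v) $\Rightarrow$ (i) by specializing $S=\ku$, and (iv) $\Rightarrow$ (v) by applying (iv) to $T=\coh(S)$ and $N=\coh(M)$, using formality to identify $\coh(\RHom_K(\ku,S))$ with $\coh(K,T)$, and then passing from $E_2$ to $E_\infty$ to the abutment of the hyper-Ext spectral sequence via the Friedlander--Suslin subquotient argument (which the paper cites as \cite{friedlander-suslin}*{Lemma 1.6} and sketches). The only notable refinement is your explicit check, via Lemma \ref{lema:graded-Noetherian}, that $T$ is module finite over the graded-central $K$-stable subalgebra $T^{\text{even}}$ --- a hypothesis of (iv) that the paper's proof leaves implicit.
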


\begin{proof}
The equivalences \ref{item:alternative1} $\iff$ \ref{item:alternative2}
$\iff$ \ref{item:alternative3} $\iff$ \ref{item:alternative4} are proved in \cite{NWW2021}*{Lemma 3.3}.
The delicate part
 is  \ref{item:alternative1} $\implies$ \ref{item:alternative2}; 
 we recap the other implications as follows.

\begin{itemize}
\item 
For \ref{item:alternative2} $\implies$ \ref{item:alternative3}, take $M = R$.

\smallbreak
\item
For
\ref{item:alternative3} $\implies$ \ref{item:alternative1}, take $R = \ku$ to 
see that \ref{item:fgca} 
holds and
then take $R = \ku \oplus M$ (with $M^2 = 0$) for a finite $K$-module $M$ to see
that \ref{item:fgcb} holds.

\smallbreak
\item
For
\ref{item:alternative2} $\implies$ \ref{item:alternative4},
take $R = Z^{\text{even}}$ 
to see that $\coh(K,M)$ is a Noetherian module
over $\coh(K, Z^{\text{even}})$.
Then $\coh(K,M)$ is a Noetherian module over $\coh(K,T)$
because the action of $\coh(K,T)$  on $\coh(K,M)$
restricts to the action of $\coh(K,Z^{\text{even}})$.

\smallbreak
\item
For
\ref{item:alternative4} $\implies$ \ref{item:alternative2},
take $T=R$ to be the affine commutative $K$-module algebra concentrated in degree zero.
\end{itemize}

\medbreak
We now turn to the new part of the lemma. 
For
\ref{item:alternative5} $\implies$ \ref{item:alternative1},
take $S$ to be the trivial dg algebra
$\ku$ concentrated in degree zero. Since 
$\coh(\RHom_K(\ku, \ku))\cong \Ext_K (\ku, \ku)$ the result follows by applying Lemma \ref{lem:fgc=hfg}.

We prove \ref{item:alternative4} 
$\implies$ \ref{item:alternative5}. 
By the assumptions in \ref{item:alternative5}, it is clear that both $\coh(S)$ and $\coh(M)$ are bounded below. Let $P_{\bullet}\to \ku$ 
be a projective resolution of $\ku$ as a $K$-module. 
This gives a double complex $\Hom_K(P_\bullet,M)$ whose cohomology yields a bounded spectral sequence
\begin{align*}
E_2^{p, q}(M)
=\Ext^{q}_K(\ku,\coh^p (M))
\implies E_\infty(M)=\coh^{p+q}(\RHom_K(\ku, M)).
\end{align*}

Take the finitely generated graded-commutative $K$-algebra $T=\coh(S)$ (which is also Noetherian by Lemma \ref{lema:graded-Noetherian} \ref{item:A-Noeth-affine}), and $N=\coh(M)$, which is a finitely generated module over $T$ and thus over $T\rtimes K$. 
We conclude by \ref{item:alternative4}
that $\coh(K,N)$ is Noetherian over $\coh(K,T)$ or, equivalently, 
that 
$E_2(M)=\coh(\RHom_K(\ku,\coh(M))$
 is Noetherian over $E_2(S)=\coh(\RHom_K(\ku,\coh(S))$. 
 
We claim that $E_2(S)\cong E_\infty(S)$. This follows from assumption \ref{item:alternative5}(c) 
that there is a homotopical isomorphism  between $S$ and $\coh(S)$, which induces an isomorphism of cohomology rings 
\[
E_2(S)=\coh(\RHom_K(\ku, \coh(S))
\ \cong \
\coh(\RHom_K(\ku, S))
=E_\infty(S).
\]

It remains to show $E_\infty(M)$ is Noetherian over $E_\infty(S)$, which follows from a classical argument of Friedlander and Suslin 
\cite{friedlander-suslin}*{Lemma 1.6}.
Indeed, since the spectral sequence $E_2(M)\implies E_\infty(M)$ is compatible with the action of the cohomology ring $E_2(S)\cong E_\infty(S)$, we have submodules over
$E_2(S)$
\[
B_2(M)\subseteq B_3(M)\subseteq \cdots \subseteq B_\infty(M)\subset Z_\infty(M)\subseteq \cdots \subseteq Z_3(M)\subseteq Z_2(M) 
\]
with $E_i(M)\cong Z_i(M)/B_i(M)$ for $i\ge 2$. 
We take quotients by $B_2(M)$.
As
$Z_{\infty}(M)/B_2(M)$
is a submodule of the
Noetherian $E_2(S)$-module $Z_2(M)/B_2(M)=E_2(M)$,
it is also
Noetherian,
and hence its quotient
$$
(Z_{\infty}(M)/B_2(M))
\ / \ 
(B_{\infty}(M)/B_2(M))
\ \cong\ 
Z_{\infty}(M)/B_{\infty}(M)
= E_{\infty}(M)
$$
is Noetherian as well.
\end{proof}

\vspace{1ex}

\subsection{The fgc property for smash products via equivariant deformation sequences}\label{sec:fgc_smash}

We can now extend the main result of \cite{negron} using the following lemma.

\begin{lemma}\cite{NWW2021}*{Appendix A3} \label{lem:NWW}
Let $K$ be a finite-dimensional Hopf algebra, $R$ a finite-dimensional augmented $K$-module algebra and $V$ a finitely generated $(R\rtimes K)$-module. Then
\begin{align*}
   \RHom_{K}(\ku, \RHom_R(\ku,V))\cong\RHom_{R\rtimes K}(\ku,V)\,. 
\end{align*}
\end{lemma}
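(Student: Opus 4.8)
The plan is to prove the isomorphism
\[
\RHom_{K}(\ku, \RHom_R(\ku,V))\cong\RHom_{R\rtimes K}(\ku,V)
\]
by exhibiting it as an instance of the Grothendieck composition-of-functors (change-of-rings) spectral sequence, realized at the level of derived functors via a suitable adjunction, then checking that the relevant "acyclicity" hypothesis holds so that the composite derived functor factors as claimed. First I would identify the two functors whose composition yields $\RHom_{R\rtimes K}(\ku,-)$. On the one hand, there is the functor $\Hom_R(\ku,-)$ sending an $(R\rtimes K)$-module to its $R$-invariants, which I would observe naturally lands in $K$-modules because $K$ normalizes the augmentation ideal of $R$ (this uses that $\ku$ is the trivial $R$-module and $R$ is a $K$-module algebra, so $K$ acts on $\Hom_R(\ku,V)$ by the conjugation-type formula analogous to \eqref{Kaction}). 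On the other hand, there is the $K$-invariants functor $\Hom_K(\ku,-)$. The content of the lemma is then precisely that
\[
\Hom_{R\rtimes K}(\ku,-)\;\cong\;\Hom_K(\ku,-)\circ\Hom_R(\ku,-)
\]
as functors from $(R\rtimes K)$-modules to $\ku$-vector spaces, which is a direct computation: an element fixed by $R\rtimes K$ is one fixed first by $R$ and then, on the invariant subspace, by $K$. The subtlety is bookkeeping the induced $K$-action on $\Hom_R(\ku,V)$, which is where the formula \eqref{Kaction} and the antipode enter.

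The key step after establishing the underived factorization is to verify the hypothesis that makes the Grothendieck spectral sequence degenerate into an honest composition of derived functors, namely that $\Hom_R(\ku,-)$ carries injective $(R\rtimes K)$-modules to $\Hom_K(\ku,-)$-acyclic $K$-modules. Here I would exploit the fact that $R\rtimes K$ is free (hence projective and, by finite-dimensionality and Frobenius-type self-injectivity, injective-preserving in the appropriate sense) as a module over $K$ via the inclusion $K\hookrightarrow R\rtimes K$, together with $R\rtimes K$ being free over $R$. The cleanest route is to take a resolution adapted to the factorization: resolve $\ku$ by projective $(R\rtimes K)$-modules, or equivalently build the composite resolution from a projective resolution $P_\bullet\to\ku$ of $\ku$ over $K$ together with a resolution over $R$, and observe that tensoring/homming through the smash product respects the factorization on the nose. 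Because $R\rtimes K\cong R\otimes K$ as a $(K,R)$-structure and $\ku=\ku\otimes_K(R\rtimes K)\otimes_R\ku$ in the relevant sense, the derived functors compose without a nontrivial spectral sequence obstruction.

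I expect the main obstacle to be the equivariance bookkeeping rather than any deep homological input: one must check that the $K$-module structure on $\Hom_R(\ku,V)$ produced by the smash-product action agrees with the one making $\Hom_K(\ku,\Hom_R(\ku,V))$ compute the right thing, and that this identification is compatible at the derived level with the $K$-actions on resolutions (the formula \eqref{Kaction} with the antipode $\Ss$ must be threaded through consistently). Concretely I would first prove the underived natural isomorphism $\Hom_{R\rtimes K}(\ku,-)\cong\Hom_K(\ku,\Hom_R(\ku,-))$ of $\ku$-modules with care given to the $K$-action, then argue that $\Hom_R(\ku,-)$ sends injectives to acyclics (using self-injectivity of the finite-dimensional algebras involved and flatness/freeness of $R\rtimes K$ over its subalgebras), and finally invoke Grothendieck's theorem to conclude the derived identity $\RHom_{R\rtimes K}(\ku,V)\cong\RHom_K(\ku,\RHom_R(\ku,V))$. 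Since the statement is cited to \cite{NWW2021}*{Appendix A3}, I would cross-check the $K$-action conventions against that reference to ensure the antipode twist matches.
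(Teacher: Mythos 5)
Your proposal follows essentially the same route as the paper's proof: both first establish the underived factorization $\Hom_{R\rtimes K}(\ku,V)=\Hom_K(\ku,\Hom_R(\ku,V))$ with the $K$-action \eqref{Kaction}, then verify the Grothendieck-type acyclicity condition --- the paper cites \cite{NWW2021}*{Lemma A.1}, that $\Hom_R(P,V)$ is projective (hence injective, $K$ being Frobenius) over $K$ whenever $P$ is projective over $R\rtimes K$ --- and conclude by taking a projective resolution of $\ku$ over $R\rtimes K$, so that the Lyndon--Hochschild--Serre spectral sequence construction yields the isomorphism at the level of right-derived functors. The ingredients you single out (freeness of $R\rtimes K$ over $R$ and over $K$, self-injectivity of the finite-dimensional Hopf algebra $K$, and the resolution adapted to the factorization) are exactly those underlying the cited lemma.
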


\begin{proof}
We recall the spectral sequence related to $R\rtimes K$ discussed in \cite{NWW2021}*{Appendix A3}. It is clear that $\Hom_{R\rtimes K}(\ku,V)=\Hom_K(\ku,\Hom_R(\ku,V))$, where $K$ acts on $\Hom_R(\ku,V)$ by \eqref{Kaction}. In particular,  $\Hom_R(P,V)$ is a projective $K$-module for any projective $(R\rtimes K)$-module $P$ \cite{NWW2021}*{Lemma A.1}. By taking projective resolutions for $\ku$ as a $K$-module and as a $(R\rtimes K)$-module, we obtain a Lyndon–Hochschild–Serre spectral sequence 
\[
E_2^{p,q}(V)=\Ext^q_K(\ku, \Ext^p_R(\ku,V)) \implies
\Ext^{p+q}_{R\rtimes K}(\ku,V),
\]
which in terms of the corresponding right-derived functors translates to the desired result. 
\end{proof}

We now obtain our second main result on the fgc property of smash products:

\begin{theorem} \label{theorem:negron}
Let $K$ be a finite-dimensional cocommutative Hopf algebra and $R$ a finite-dimensional augmented $K$-module algebra. If $R$ admits a $K$-equivariant deformation sequence, then the smash product $R\rtimes K$ has fgc. 
\end{theorem}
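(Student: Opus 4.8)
The plan is to follow the strategy embedded in the alternative characterization of fgc in Proposition \ref{intP}, specifically the new implication \ref{item:alternative5} $\implies$ \ref{item:alternative1}, applied through the dg $K$-module machinery assembled in Subsection \ref{subsec:constructing}. First I would pass from the given $K$-equivariant deformation sequence $Z \overset{\iota}{\hookrightarrow} Q \overset{\pi}{\twoheadrightarrow} R$ to the associated $K$-equivariant \emph{formal} deformation sequence $\widehat Z \overset{\widehat\iota}{\hookrightarrow} \widehat Q \overset{\widehat\pi}{\twoheadrightarrow} R$ using Lemma \ref{lem:formal-deformation-equivariant}, which is legitimate because $K$ is finite-dimensional. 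This puts us exactly in the standing setting of Subsection \ref{subsec:constructing}, where we additionally use that $K$ is cocommutative so that $\Rep K$ has a symmetric braiding.

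Next I would invoke Lemma \ref{dgK} to produce the dg $K$-module algebras $\mathcal K$ (homotopically isomorphic to $\ku$) and $\mathcal K_Q = Q \otimes_Z \mathcal K$ (homotopically isomorphic to $R$), together with the dg $K$-module algebra $\mathcal T = \REnd_{\mathcal K \otimes_Z \mathcal K^{\op}}(\mathcal K)$, which is homotopically isomorphic to the symmetric algebra $A_Z$ with zero differential. The key finite-generation input is Lemma \ref{fgcR}: for dg $K$-modules $M, N$ over $\mathcal K_Q$ with finite-dimensional cohomology, $\coh(\RHom_{\mathcal K_Q}(M,N))$ is finitely generated over $\coh(\mathcal T)$, and $\coh(\mathcal T) \cong \coh(A_Z)$ is affine and graded-commutative. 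I would then verify that the dg $K$-module algebra $S \coloneqq \REnd_{\mathcal K_Q}(\mathcal K_Q)$ (whose cohomology computes $\Ext_R$-information) satisfies the three hypotheses (a),(b),(c) of \ref{item:alternative5}: affineness and graded-commutativity of $\coh(S)$ come from the homotopy identification with $A_Z$ via Lemma \ref{fgcR}(i), finite generation of $\coh(M)$ over $\coh(S)$ is Lemma \ref{fgcR}(ii), and the homotopical self-isomorphism $S \simeq \coh(S)$ is built into the construction. Since $K$ is cocommutative it is power reductive by Lemma \ref{HopfI}\ref{item:HopfI-cocom}, so Proposition \ref{intP} applies.

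Applying the implication \ref{item:alternative5} $\implies$ \ref{item:alternative1} of Proposition \ref{intP}, I would conclude that $\coh(\RHom_K(\ku, M))$ is Noetherian over $\coh(\RHom_K(\ku, S))$. To translate this into a statement about $R \rtimes K$, I would use Lemma \ref{lem:NWW}, which gives the crucial identification
\begin{align*}
\RHom_K(\ku, \RHom_R(\ku, V)) \cong \RHom_{R \rtimes K}(\ku, V)
\end{align*}
for any finitely generated $(R \rtimes K)$-module $V$. Taking cohomology, the Noetherianity obtained above becomes Noetherianity of $\coh(R \rtimes K, V)$ over $\coh(R \rtimes K, \ku)$, which by Lemma \ref{lem:fgc=hfg} is precisely fgc for $R \rtimes K$.

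The main obstacle I anticipate is the bookkeeping needed to feed the dg data $(S, M)$ into the hypotheses of \ref{item:alternative5} in a $K$-equivariant and homotopy-invariant way: one must check that the homotopy isomorphisms of Lemma \ref{dgK} and the functor $\mathfrak{def}^K$ of \eqref{eq:action_of_S} match the abstract requirements (a)--(c), and that the compatibility of the $\mathcal T$-action on $\RHom_{\mathcal K_Q}(M,N)$ through either $\mathfrak{def}^K_M$ or $\mathfrak{def}^K_N$ is the one used in the spectral sequence argument. A subtler point is reconciling the right $K$-module structure on $\Hom$ from \eqref{Kaction} with the left $K$-module structure of \cite{negron}, which is exactly where cocommutativity (allowing passage via the antipode) is indispensable; any failure of cocommutativity would break the symmetric-braiding requirement flagged before Lemma \ref{dgK}, so I would make sure every homotopy isomorphism is genuinely $K$-linear in the correct variance.
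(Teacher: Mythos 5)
Your overall architecture — pass to a formal $K$-equivariant deformation sequence, build the dg $K$-module algebras of Lemma \ref{dgK}, feed them into Proposition \ref{intP} \ref{item:alternative5} via Lemma \ref{fgcR}, and translate back through Lemma \ref{lem:NWW} — is exactly the paper's, but your choice of the dg algebra $S$ is wrong, and this is not a bookkeeping issue. You set $S \coloneqq \REnd_{\mathcal K_Q}(\mathcal K_Q)$; since $\mathcal K_Q$ is homotopically isomorphic to $R$, this dg algebra has cohomology isomorphic to $R^{\op}$ concentrated in degree zero, which fails hypothesis (a) of \ref{item:alternative5} (it is not graded-commutative for a general augmented $K$-module algebra $R$) and fails (b) as well: $\coh(M)=\Ext_R(\ku,V)$ is typically infinite-dimensional and is not finitely generated over $R^{\op}$, whose action factors through the augmentation. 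Note also that Lemma \ref{fgcR}(ii), which you invoke for (b), gives finite generation over $\coh(\mathcal T)$, not over $\coh(\REnd_{\mathcal K_Q}(\mathcal K_Q))$ — so your own citation forces the correct choice $S=\mathcal T=\REnd_{\mathcal K\otimes_Z\mathcal K^{\op}}(\mathcal K)$, with $M=\RHom_R(\ku,V)$ made into a $\mathcal T$-module via the map $\mathfrak{def}^K_\ku\colon \mathcal T\to\REnd_{\mathcal K_Q}(\ku)\cong\REnd_R(\ku)$ of \eqref{eq:action_of_S}. The even more tempting choice $S=\REnd_{\mathcal K_Q}(\ku)\simeq\REnd_R(\ku)$ also fails: $\Ext_R(\ku,\ku)$ need not be graded-commutative for an augmented algebra, and the formality required in (c) is precisely what is \emph{not} known for the Yoneda algebra; the whole point of the deformation machinery is that $\mathcal T$ is formal with polynomial cohomology $A_Z$ (Lemma \ref{dgK}(iii)), which is what makes (a) and (c) verifiable.

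Two further gaps remain even after correcting $S$. First, you invoke the implication \ref{item:alternative5} $\implies$ \ref{item:alternative1}, but the argument needs the opposite direction: one must first know that $K$ itself has fgc — for cocommutative $K$ this is \cite{friedlander-suslin} — and that $K$ is power reductive (Lemma \ref{HopfI} \ref{item:HopfI-cocom}); the equivalence in Proposition \ref{intP} then guarantees that statement \ref{item:alternative5} \emph{holds}, and it is this statement that you apply to the pair $(\mathcal T,\,\RHom_R(\ku,V))$. Your proposal never establishes fgc of $K$, so \ref{item:alternative5} is not available to you. Second, the conclusion of \ref{item:alternative5} is Noetherianity of $\coh(R\rtimes K,V)$ over $\coh(\RHom_K(\ku,\mathcal T))$, which is \emph{not} yet Noetherianity over $\coh(R\rtimes K,\ku)$: one must observe that the action of $\coh(\RHom_K(\ku,\mathcal T))$ on $\coh(\RHom_{R\rtimes K}(\ku,V))$ factors through $\coh(R\rtimes K,\ku)$, and then take $V=\ku$ together with Lemma \ref{lema:graded-Noetherian} \ref{item:A-f.g.} to get finite generation of the cohomology ring itself. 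Your proposal elides both of these closing steps.
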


\begin{proof}
We need to show that $\coh(R\rtimes K,\ku)$ is finitely generated and that for any finitely generated $(R\rtimes K)$-module $V$, we have that $\coh(R\rtimes K,V)$ is finitely generated over $\coh(R\rtimes K,\ku)$. Owing to Lemma \ref{lem:formal-deformation}, we can
consider a formal $K$-equivariant deformation sequence
\begin{align*}
Z=\ku[[x_1,\cdots,x_n]]\hookrightarrow Q\twoheadrightarrow R\,.
\end{align*}
By Theorem \ref{thm:negron-pevtsova}, we have that the augmented algebra $R$ has fgc.
Additionally, $K$ also has fgc by \cite{friedlander-suslin} and is power reductive owing to cocommutativity and Lemma \ref{HopfI}. 

Consider the dg $K$-module algebras $\mathcal K$ and $\mathcal K_Q=\mathcal K\otimes_Z Q$ from Lemma \ref{dgK} that are homotopically isomorphic to $\ku$ and $R$, respectively. Let $\mathcal{T} \coloneqq \REnd_{\mathcal K\otimes_Z\mathcal K^{\op}}(\mathcal K)$ and $M=\RHom_R(\ku,V)$ with $\mathcal{T}$-action given by pulling back the action of $\RHom_R(\ku)$ on $M$ along the algebra morphism
\begin{align*}
\mathfrak{def}^K_\ku\colon \mathcal{T}=\REnd_{\mathcal K\otimes_Z\mathcal K^{\op}}(\mathcal K) \to \REnd_{\mathcal K_Q}(\ku)\cong \REnd_{ R}(\ku) 
\end{align*}
consisting of the composition of \eqref{eq:action_of_S} with the induced isomorphism coming from the homotopic isomorphism between $\mathcal K_Q$ and $R$ granted by Lemma \ref{dgK} (iv). We will use Proposition \ref{intP} \ref{item:alternative5}, but in order to do so, we need to check the following requirements:
\begin{enumerate}[label=\rm{(\alph*)}]
\item $\coh(\mathcal{T})$ is affine graded-commutative and concentrated in non-negative degrees: according to Lemma \ref{dgK} (iii), $\coh(\mathcal{T})$ is homotopically isomorphic to the symmetric algebra on $\mgo/ \mgo^2$ shifted to degree $2$, which obeys the desired properties.
\item $\coh(\RHom_R(\ku,V))$ is finitely generated over $\coh(\mathcal{T})$: this follows from applying Lemma \ref{fgcR} to $M=\ku$ and $N=V$ (the requirement that $\coh(V)$ is finite-dimensional is fulfilled since $V$ is assumed finitely generated over $R\rtimes K$).
\item $\mathcal{T}$ is homotopically isomorphic to its cohomology ring $\coh(\mathcal{T})$: by Lemma \ref{dgK} (iii), the cohomology of $\mathcal T$ is $A_Z$ which in turn is homotopically isomorphic to $\mathcal{T}$ itself.
\end{enumerate}
By Proposition \ref{intP} \ref{item:alternative5}, we conclude that $\coh(\RHom_K(\ku, \RHom_R(\ku,V)))$ is Noetherian over $\coh(\RHom_K(\ku,\mathcal{T}))$, which in view of Lemma \ref{lem:NWW} means that $\coh(R\rtimes K,V)$ is Noetherian over $\coh(\RHom_K(\ku,\mathcal{T}))$.

Now, the action of $\coh(\RHom_K(\ku, \mathcal T))$ on $\coh(R\rtimes K,V)=\coh(\RHom_{R\rtimes K}(\ku,V))$ factors through that of $\coh(R\rtimes K,\ku)=\coh(\RHom_{R\rtimes K}(\ku,\ku))$, thus $\coh(R\rtimes K,V)$ is also Noetherian over $\coh(R\rtimes K,\ku)$ and, in particular, finitely generated over $\coh(R\rtimes K,\ku)$. 
When $V=\ku$, we obtain that $\coh(R\rtimes K,\ku)$ is Noetherian over itself, and thus, by Lemma \ref{lema:graded-Noetherian} \ref{item:A-f.g.}, we have that $\coh(R\rtimes K,\ku)$ is finitely generated as an algebra over $\coh^0(R\rtimes K,\ku)\cong\ku$.
\end{proof}

\begin{cor}
Let $K$ be a finite-dimensional cocommutative Hopf algebra and 
$R$ a finite-dimensional  Hopf algebra in $\yd{K}$.
If $R$ admits a $K$-equivariant deformation sequence,
then the bosonization $R \# K$ has fgc.
\end{cor}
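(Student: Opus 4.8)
The plan is to deduce this directly from Theorem \ref{theorem:negron} by observing that, at the level of algebras, the bosonization $R \# K$ is nothing but the smash product $R \rtimes K$, and that the fgc property depends only on this underlying augmented algebra structure.

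First I would record that a Hopf algebra $R$ in $\yd{K}$ is, in particular, a $K$-module algebra: forgetting the comodule and coalgebra structure, the Yetter--Drinfeld action of $K$ on $R$ makes $R$ an algebra internal to $\Rep K$. Moreover, $R$ is augmented: its counit $\epsilon_R \colon R \to \ku$ is a morphism in $\yd{K}$, and since $\ku$ carries the trivial action, this morphism satisfies $\epsilon_R(h \cdot x) = \epsilon_K(h)\epsilon_R(x)$ for all $h \in K$ and $x \in R$, which is exactly the condition making $R$ an augmented $K$-module algebra in the sense required by Theorem \ref{theorem:negron}.

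Next I would invoke the description of the bosonization recalled in Section \ref{sec:hopf}: by definition $R \# K$ carries the smash product multiplication, so as an (augmented) algebra one has $R \# K = R \rtimes K$. Since the cohomology ring $\coh(-,\ku) = \bigoplus_{n} \Ext^n(\ku,\ku)$ and the cohomology modules $\coh(-,M)$ entering the definition of fgc depend only on the underlying augmented algebra and not on any coalgebra structure, the bosonization $R \# K$ has fgc if and only if the smash product $R \rtimes K$ does.

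Finally, the hypotheses of Theorem \ref{theorem:negron} are met: $K$ is finite-dimensional and cocommutative, $R$ is a finite-dimensional augmented $K$-module algebra by the first step, and $R$ admits a $K$-equivariant deformation sequence by assumption. Applying that theorem gives that $R \rtimes K$, and hence $R \# K$, has fgc. There is no genuine obstacle here, as the statement is a formal consequence of the theorem; the only point requiring (minor) care is verifying that the counit of $R$ furnishes an augmentation compatible with the $K$-action, which is handled in the first step.
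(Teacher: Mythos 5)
Your proof is correct and matches the paper's intent exactly: the corollary is stated there as an immediate consequence of Theorem \ref{theorem:negron}, precisely because the bosonization $R \# K$ coincides with the smash product $R \rtimes K$ as an augmented algebra and the counit of $R$ makes it an augmented $K$-module algebra. Your careful verification of the augmentation compatibility is the only step the paper leaves implicit, and you handle it correctly.
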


\section{Smash products of deformation sequences} \label{sec:deformations-hopf-alg} 

\subsection{Deformation sequences of Hopf algebras} \label{subsec:deformations-hopf-alg} 
We combine the notions of deformation sequence and exact sequence of Hopf algebras in the following concept. 

\begin{definition}
\label{def:dfor-seq-hopf}
A \emph{deformation sequence of Hopf algebras} is an exact sequence of Hopf algebras (cf.~Definition \ref{ExactSequenceDef})
\begin{align*}
W \overset{\jmath}{\hookrightarrow} H \overset{\wp}{\twoheadrightarrow} K\end{align*}
which is also a deformation sequence with respect to the counit $\epsilon_W$ of $W$ (cf.~Definition \ref{def:defor-seq}). In this case, we say the Hopf algebra $K$
{\em admits a deformation 
sequence of Hopf algebras}.
\end{definition}

\vspace{1ex}

\begin{remark}
Note that a deformation sequence of Hopf algebras
is a pair of  Hopf algebra maps 
$W\overset{\jmath}{\hookrightarrow} H \overset{\wp}{\twoheadrightarrow} K$
satisfying 
\begin{itemize}
\item[\ref{item:suc-exacta-1}\ ] 
$\jmath$ is injective, and we identify $W$ with $\jmath(W)$,
\item[\ref{item:suc-exacta-2}\ ]
$\wp$ is surjective,
\item[\ref{item:suc-exacta-3}\ ] 
$\ker \wp = H\, W^+$
for $W^+=\ker \epsilon_W$,
\item[\ref{item:deformation-flat}'] 
$H$ is finitely generated as a left
$W$-module, 
\item[\ref{item:deformation-Z}\ ]
$W$ is smooth, finitely generated, and central in $H$, and
\item[\ref{item:finite-global-dim}\ ] 
$H$ has finite global dimension.
\end{itemize}
We argue that these six conditions are indeed sufficient
to imply those of Definitions~\ref{ExactSequenceDef}
and~\ref{def:defor-seq}.
Conditions \ref{item:deformation-flat}',
\ref{item:deformation-Z},
and
\ref{item:finite-global-dim}
imply that
$H$ is a finitely generated, right Noetherian PI Hopf algebra
by Remark \ref{rem:Noetherian}
and thus
faithfully flat over 
as a left module
over any Hopf subalgebra by \cite{skryabin-flatness-PI}*{Corollary 4.6},
giving condition \ref{item:deformation-flat}
of
Definition \ref{def:defor-seq}.
Condition
\ref{item:suc-exacta-4} 
of Definition~\ref{ExactSequenceDef}
 holds by Remark
\ref{rem:extension-fflat} 
and condition \ref{item:augmented-sequence} of
Definition~\ref{def:defor-seq}
holds as $\jmath$ and $\wp$ are Hopf algebra maps.
\end{remark}

\begin{remark}
When $\car \ku = 0$, a commutative Hopf algebra  is  smooth whenever it is finitely generated, 
see \cite{milneiAG}*{Theorem 3.38}.
\end{remark}

\begin{remark}\label{rem:antipode}
Let $W \hookrightarrow H \twoheadrightarrow K$ be a 
deformation sequence of Hopf algebras. It is well-known that the antipodes
of $W$ and $K$ are bijective, since $W$ is commutative and $K$ is finite-dimensional (cf. Remark~\ref{LastTermFiniteDimensional}). 
The antipode of $H$ is bijective as well
by \cite{skryabin-newresults}*{Corollary 2}
since $H$ is a Noetherian, PI, finitely generated algebra (see Remark \ref{rem:Noetherian}).
\end{remark}

\begin{remark}
The definition of 
{\em integrable Hopf algebra} in
\cite{negron-pevtsova} requires that $W$ is a coideal subalgebra;
here we assume  that $W$ is a Hopf subalgebra
in view of our applications below. 
\end{remark}

We conclude immediately from Theorem \ref{thm:negron-pevtsova}:

\begin{cor}\label{cor:Kfgc} \cite{negron-pevtsova}
A finite-dimensional  Hopf algebra 
admitting a deformation
sequence of Hopf algebras 
has  fgc. 
\end{cor}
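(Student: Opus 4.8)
The plan is to recognize that the corollary is an immediate specialization of Theorem \ref{thm:negron-pevtsova}. Concretely, a deformation sequence of Hopf algebras $W \overset{\jmath}{\hookrightarrow} H \overset{\wp}{\twoheadrightarrow} K$ is, by Definition \ref{def:dfor-seq-hopf}, in particular a deformation sequence in the sense of Definition \ref{def:defor-seq} with $Z = W$, $Q = H$, and $R = K$ (that the six Hopf-theoretic conditions do entail the full list of axioms in Definition \ref{def:defor-seq} is exactly the content of the Remark preceding this corollary). Having made this identification, I would simply invoke Theorem \ref{thm:negron-pevtsova} to conclude that $R = K$ has fgc.

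To apply that theorem I must check that $R = K$ is a \emph{finite-dimensional augmented} algebra. Finite-dimensionality holds by hypothesis, and in any case follows automatically from Remark \ref{LastTermFiniteDimensional}, which guarantees that the last term of any deformation sequence is finite-dimensional. The augmentation on $K$ is its counit $\epsilon_K$; condition \ref{item:augmented-sequence} of Definition \ref{def:defor-seq} is met because $\jmath$ and $\wp$, being Hopf algebra maps, preserve counits, as already observed in the Remark following Definition \ref{def:dfor-seq-hopf}. Note also that this is precisely the augmentation with respect to which the fgc property of the Hopf algebra $K$ is defined, so no compatibility issue arises. With these verifications in place, Theorem \ref{thm:negron-pevtsova} yields that $K$ has fgc, completing the proof.

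I do not expect any genuine obstacle here: the essential work is already packaged into Theorem \ref{thm:negron-pevtsova} (drawn from \cite{negron-pevtsova}) and into the preceding Remark, which translates the definition of a deformation sequence of Hopf algebras into the hypotheses of Definition \ref{def:defor-seq}. Thus the proof reduces to matching definitions and citing the theorem; the only point warranting explicit mention is the identification of $K$'s counit as the relevant augmentation.
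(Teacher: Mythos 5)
Your proposal is correct and is essentially identical to the paper's own proof: the paper states this corollary as an immediate consequence of Theorem \ref{thm:negron-pevtsova}, relying on the preceding remark to translate the Hopf-algebraic axioms into those of Definition \ref{def:defor-seq}, exactly as you do. Your explicit checks of finite-dimensionality and of the counit serving as the augmentation are just a more detailed write-up of the same matching-of-definitions argument.
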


\vspace{1ex}

Deformation sequences of Hopf algebras form a category: a {\em morphism}
$\mathbf{\Psi}: \Cgot \to \Cgot'$
from
\[\Cgot: W \overset{\jmath}{\hookrightarrow} H \overset{\wp}{\twoheadrightarrow}K \quad\text{ to }\quad \Cgot': W' \overset{\jmath'}{\hookrightarrow} H' \overset{\wp'}{\twoheadrightarrow}K'\]
is a morphism of Hopf algebras $\Psi: H \to H'$ such that $\Psi(W)  \subseteq W'$,
thus inducing a commutative diagram
\begin{align*}
\xymatrix@R-5pt@C-5pt{W\ar  @{->}[r]^{\jmath } \ar@{->}[d]
& H \ar  @{->}[r]^{\wp } \ar@{->}[d]^{\Psi } &  K\ar@{->}[d]
\\
W'\ar  @{->}[r]^{\jmath' } & H' \ar  @{->}[r]^{\wp' }&  K'.}
\end{align*}

It would be interesting to understand the subcategory of those deformation sequences of Hopf algebras with fixed $K$, where the induced map from $K$ to $K$ is the identity.

\vspace{1ex}

\subsection{Smash product of deformation sequences}\label{subsec:smash-defor-seq}
We explore the fgc for smash products by
formalizing an idea of deformation sequences
equivariant with respect to a
deformation sequence of Hopf algebras.

\begin{definition} 
\label{def:smash-product-def-seq}
Let $K$ be a Hopf algebra admitting
a deformation sequence of
Hopf algebras 
$\Cgot: W \overset{\jmath}{\hookrightarrow} H \overset{\wp}{\twoheadrightarrow} K$.
A deformation sequence of algebras 
\[\Mgot\colon Z \overset{\iota}{\hookrightarrow} Q \overset{\pi}{\twoheadrightarrow} R\]
is \emph{$\Cgot$-equivariant}
if
\medbreak
\begin{enumerate}[leftmargin=7ex,label=\rm{(\alph*)}] \setcounter{enumi}{9}
\item\label{item:equivariant-Hmod}  
$Z$, $Q$ and $R$ are augmented 
$H$-module algebras
and
$\iota$ and $\pi$ are morphisms
of augmented $H$-module algebras,

\medbreak
\item
\label{item:equivariant-W-trivial} 
$W$ acts  trivially on $Q$, and

\medbreak
\item\label{item:equivariant-Z-stable} $H$ acts trivially on $Z$.
\end{enumerate}
In this case, 
we say $R$
admits a \textit{$\Cgot$-equivariant
deformation sequence}.
\end{definition}

\vspace{1ex}

The axioms in the above definition 
allow us to define a deformation sequence
for smash products in the next result.
Note that conditions \ref{item:equivariant-W-trivial} and \ref{item:equivariant-Z-stable}
imply
that $Z\ot W$ under component-wise multiplication
is a subalgebra of $Q\rtimes H$.

\begin{prop}\label{lema:equivariant}
Let $\Cgot: W \overset{\jmath}{\hookrightarrow} H \overset{\wp}{\twoheadrightarrow} K$ be a deformation sequence of Hopf algebras and let $\Mgot: Z \overset{\iota}{\hookrightarrow} Q \overset{\pi}{\twoheadrightarrow} R$ be 
a $\Cgot$-equivariant deformation sequence. Then  

\begin{enumerate}[leftmargin=7ex,label=\rm{(\roman*)}]  
\item\label{item:equivariant-1}  $R$ is a $K$-module algebra and 
$\pi(h \cdot q) = \wp(h) \cdot \pi(q)$,  for all $h \in H$, $q \in Q$.

\medbreak
\item\label{item:equivariant-2} We have a  deformation sequence
\begin{equation} \label{eq:equiv-def-seq}
Z \otimes W \overset{\iota \rtimes \jmath}{\hookrightarrow} 
 Q \rtimes H \overset{\pi \rtimes \wp}{\twoheadrightarrow}  R \rtimes K.
\end{equation}
\end{enumerate}
\end{prop}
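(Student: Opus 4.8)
The plan is to handle \ref{item:equivariant-1} first, then build the smash-product sequence of \ref{item:equivariant-2} from the maps $\iota\rtimes\jmath$ and $\pi\rtimes\wp$ and check the seven axioms of Definition \ref{def:defor-seq} one by one, the decisive one being finite global dimension.

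For \ref{item:equivariant-1}: by \ref{item:equivariant-Hmod}, $R$ is already an augmented $H$-module algebra and $\pi$ is $H$-linear, so the only point is that the $H$-action descends to $K=H/HW^+$. First I would show $W$ acts trivially on $R$: writing $r=\pi(q)$, for $w\in W$ we get $\jmath(w)\cdot r=\pi(\jmath(w)\cdot q)=\epsilon_W(w)\pi(q)$ using $H$-linearity of $\pi$ and hypothesis \ref{item:equivariant-W-trivial}. Hence $\ker\wp=HW^+$ annihilates $R$, the $H$-action factors through $K$, and $R$ becomes a $K$-module algebra. The identity $\pi(h\cdot q)=\wp(h)\cdot\pi(q)$ is then immediate, since the left side equals $h\cdot\pi(q)$ and the $K$-action on $R$ is by construction the pullback of the $H$-action along $\wp$.

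For \ref{item:equivariant-2}, the key simplification is that \ref{item:equivariant-W-trivial} and \ref{item:equivariant-Z-stable} collapse the relevant smash products to tensor products: since $W$ acts trivially on $Z$ one has $Z\rtimes W=Z\ot W$, and the component-wise formulas $(z\ot w)(q\ot h)=zq\ot wh$ and $(q\ot h)(z\ot w)=qz\ot hw$ hold in $Q\rtimes H$. Applying Lemma \ref{lema:smash} to $\jmath\colon W\to H$ (with the algebra map $\iota$) and to $\wp\colon H\to K$ (with $\pi$, using \ref{item:equivariant-1}) produces the two algebra maps. Then I would verify the axioms. Conditions \ref{item:deformation-iota} and \ref{item:deformation-pi} hold because over the field $\ku$ the maps $\iota\ot\jmath$ and $\pi\ot\wp$ are tensor products of injective, respectively surjective, linear maps. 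For \ref{item:ker-pi-Z}, writing $(Z\ot W)^+=Z^+\ot W+Z\ot W^+$ and using the collapsed multiplication gives $(Q\rtimes H)(Z\ot W)^+=QZ^+\ot H+Q\ot HW^+$, which equals $\ker(\pi\ot\wp)=\ker\pi\ot H+Q\ot\ker\wp$ by \ref{item:ker-pi-Z} for $\Mgot$ and \ref{item:suc-exacta-3} for $\Cgot$. For \ref{item:deformation-flat}, the collapsed left action identifies $Q\rtimes H$ with ${}_ZQ\ot_\ku{}_WH$ as a $Z\ot W$-module, a tensor product of finitely generated flat modules, hence finitely generated and flat over $Z\ot W$; for \ref{item:deformation-Z}, centrality of $Z\ot W$ follows from $zq=qz$ and $wh=hw$, and smoothness from $Z$ and $W$ being smooth (a product of smooth affine schemes over the algebraically closed $\ku$ is smooth). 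The augmentations in \ref{item:augmented-sequence} are $\epsilon_Q\ot\epsilon_H$ and $\epsilon_R\ot\epsilon_K$, preserved by both maps.

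The main obstacle is \ref{item:finite-global-dim}, finiteness of $\gldim(Q\rtimes H)$. I expect to establish the bound $\gldim(Q\rtimes H)\le\gldim Q+\gldim H$, both summands finite by the hypotheses on $\Mgot$ and $\Cgot$. The tool is the Lyndon--Hochschild--Serre spectral sequence for the smash product, in the spirit of Lemma \ref{lem:NWW} and \cite{NWW2021}*{Appendix A3}, of the form $\Ext^p_H(\ku,\Ext^q_Q(M,N))\Rightarrow\Ext^{p+q}_{Q\rtimes H}(M,N)$: its $E_2$-page vanishes for $p>\projdim_H\ku$ and for $q>\gldim Q$, forcing $\Ext^n_{Q\rtimes H}(M,N)=0$ once $n>\gldim Q+\gldim H$. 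The technical difficulty is that I need this spectral sequence for \emph{arbitrary} modules $M$, not only $M=\ku$ as in Lemma \ref{lem:NWW}; this rests on $Q\rtimes H$ being free as a right $Q$-module, which is guaranteed by bijectivity of the antipode of $H$ (Remark \ref{rem:antipode}), so that induction and restriction between $Q\rtimes H$ and its subalgebra $Q$ are exact and carry projectives to projectives. Note that one cannot instead reduce to the central subalgebra $Z\ot W$, since the special fiber of $Q\rtimes H$ over the distinguished point is $R\rtimes K$, which has infinite global dimension; the smash-product structure is genuinely needed.
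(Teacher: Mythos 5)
Your proposal is correct, and for part \ref{item:equivariant-1}, the construction of the two maps via Lemma \ref{lema:smash}, conditions (a)--(c) of Definition \ref{def:defor-seq}, centrality and smoothness of $Z\ot W$, and the augmentations, it coincides with the paper's own proof. It genuinely diverges on the two substantive axioms. For flatness (condition \ref{item:deformation-flat}), the paper does \emph{not} use your collapsing observation: it regards $Q\rtimes H$ as a twisted tensor product $Q\ott H$ and invokes an appendix (Lemma \ref{OneSidedTTP} and Corollary \ref{FlatTTPs}), which requires the twisting map to be invertible and hence the antipode of $H$ to be bijective (Remark \ref{rem:antipode}). Your route --- that conditions \ref{item:equivariant-W-trivial} and \ref{item:equivariant-Z-stable} give ${}_{Z\ot W}(Q\rtimes H)\cong {}_ZQ\ot_\ku{}_WH$ as left modules, together with the standard isomorphism $X\ot_{Z\ot W}(Q\ot H)\cong (X\ot_Z Q)\ot_W H$ showing that an outer tensor product of flat modules is flat --- is more elementary, needs no antipode at this step, and makes the appendix unnecessary for this proposition; that is a real simplification. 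For finite global dimension (condition \ref{item:finite-global-dim}), the paper simply cites the main result of \cite{LorenzLorenz} to pass from $\gldim Q,\gldim H<\infty$ to $\gldim(Q\rtimes H)<\infty$, whereas you propose to reprove this bound by a Lyndon--Hochschild--Serre spectral sequence; in effect you are re-deriving the cited theorem rather than using it.

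That spectral-sequence step can be completed, but it is under-justified exactly where you flag the difficulty. For the convergence of $\Ext^p_H(\ku,\Ext^q_Q(M,N))\Rightarrow\Ext^{p+q}_{Q\rtimes H}(M,N)$ with \emph{arbitrary} $M$, it is not enough that restriction along $Q\subset Q\rtimes H$ and induction preserve projectives; one needs that $\Hom_Q(P,N)$ is $\Ext_H(\ku,-)$-acyclic for every projective $(Q\rtimes H)$-module $P$. Here $H$ is infinite-dimensional (it is module-finite over the affine algebra $W$), so the finite-dimensional fact underlying Lemma \ref{lem:NWW} (projectivity of these Hom-modules, \cite{NWW2021}) does not apply verbatim: $\Hom_Q(Q\rtimes H,N)$ is a twisted coinduced module $\Hom_\ku(H,N)$, which one must untwist using the bijective antipode and then show acyclic by an Eckmann--Shapiro argument; it is generally not projective. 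Finally, your closing claim that $R\rtimes K$ always has infinite global dimension is false in degenerate cases (e.g.\ $R=\ku$ with $K$ semisimple); nothing in your argument depends on it, but it should be dropped.
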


\begin{proof} 
For \ref{item:equivariant-1}: Let $\rho: H \to \End R$ be the associated representation. We have to check that $\rho(W^+) = 0$; then $\rho$ factors through $K$.

Choose $w \in W^+$ and $r \in R$; then $r = \pi(q)$ for some $q\in Q$
and
\begin{align*}
w\cdot r &= w\cdot \pi(q) = \pi(w\cdot q) = 0 
\end{align*}
since by condition~\ref{item:equivariant-W-trivial}
of Definition~\ref{def:smash-product-def-seq}, $w\cdot q = \epsilon(w) q = 0$.

\medbreak
For \ref{item:equivariant-2}: 
We check the conditions (a) through (h) of
Definition \ref{def:defor-seq}.
By \ref{item:equivariant-W-trivial}, $Z$ is a $W$-module subalgebra 
of $Q$, cf. Example \ref{exa:trivial-action}; hence
$\iota(w \cdot z) = \jmath(w) \cdot \iota(z)$,  for all $w \in W$, $z \in Z$.
Because of this and \ref{item:equivariant-1},  Lemma \ref{lema:smash}
implies that we have the algebra maps  $\iota \rtimes \jmath$ and $\pi \rtimes \wp$. 
We verify that they satisfy the conditions in Definition \ref{def:defor-seq}.
Evidently, \ref{item:suc-exacta-1} and \ref{item:suc-exacta-2} hold.
Next we check \ref{item:ker-pi-Z}:
\begin{align*}
\ker  (\pi \rtimes \wp)  &= \ker  \pi \otimes H + Q \otimes \ker  \wp = 
QZ^+ \otimes H + Q \otimes HW^+ 
\\
&= (Q \rtimes H) (Z^+ \otimes W + Z \otimes W^+) = (Q \rtimes H)(Z \rtimes W)^+. 
\end{align*}
Next, if $z\in Z$, $w\in W$, $q\in Q$ and $h \in H$, then
\begin{align*}
(z\otimes w) (q \otimes h)  &= z w\_{1} \cdot q \otimes w\_{2}h = zq \otimes wh,
\\
(q \otimes h)(z\otimes w)  &= q h\_{1} \cdot z \otimes h\_{2}w = qz \otimes hw,
\end{align*}
by \ref{item:equivariant-W-trivial} and \ref{item:equivariant-Z-stable}. 
Hence $z\otimes w$ is central in $Q \rtimes H$
and $Z \rtimes W = Z \otimes W$ is smooth and finitely generated, being the tensor product of two smooth and finitely generated algebras, 
thus \ref{item:deformation-Z} holds. 
Note that $Q \rtimes H$ is finite over $Z \otimes W$.

We argue that $Q \rtimes H$ is flat
over its subalgebra $Z \otimes W$
by viewing $Q\rtimes H$
as a twisted tensor product
algebra $Q\ott H$
(see \cite{CapSchichlVanzura})
with twisting map
$\tau: H\ot Q\rightarrow Q\ot H$
given by
$h\ot q\mapsto 
h\_{1}q \otimes h\_{2}$.
By Remark \ref{rem:antipode},
the antipode of $H$ is bijective
so $\tau$ is an invertible map
(e.g., see \cite{SheplerWitherspoon-TTP-AWEZ}*{Section 10}).
Corollary \ref{FlatTTPs}
with conditions \ref{item:equivariant-W-trivial} and \ref{item:equivariant-Z-stable}
implies that 
$Q \rtimes H=Q\ott H$ is flat
over $Z \otimes W=Z\rtimes W =Z\ott W$ and \ref{item:deformation-flat} holds.

Finally, by Definitions \ref{def:defor-seq} and \ref{def:dfor-seq-hopf}, 
$H$ and $Q$ have finite global dimension. 
By the main result of  \cite{LorenzLorenz}, $Q \rtimes H$ also has
finite global dimension, i.e., condition \ref{item:finite-global-dim} of Definition~\ref{def:defor-seq} holds. 
This proves that \eqref{eq:equiv-def-seq} is a deformation sequence.
\end{proof}

\vspace{1ex}

We obtain the third main result of the paper.

\begin{theorem}\label{thm:fgc-smash}
Let $K$ be a finite-dimensional Hopf algebra.
If $K$ admits a deformation sequence
$\Cgot$ of Hopf algebras
and $R$ is a finite-dimensional
algebra admiting
a $\Cgot$-equivariant deformation sequence, 
then the smash product $R \rtimes K$ has fgc.
\end{theorem}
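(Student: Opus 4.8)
The plan is to reduce the statement directly to Theorem \ref{thm:negron-pevtsova}, using the deformation sequence for the smash product already constructed in Proposition \ref{lema:equivariant}. Indeed, the hypotheses say precisely that $K$ admits a deformation sequence $\Cgot$ of Hopf algebras and that $R$ admits a $\Cgot$-equivariant deformation sequence $\Mgot$, so Proposition \ref{lema:equivariant} \ref{item:equivariant-2} applies verbatim and produces a genuine deformation sequence
\begin{align*}
Z \otimes W \overset{\iota \rtimes \jmath}{\hookrightarrow} Q \rtimes H \overset{\pi \rtimes \wp}{\twoheadrightarrow} R \rtimes K.
\end{align*}
In particular the last term of this sequence is exactly the algebra $R \rtimes K$ whose fgc we wish to establish, so $R \rtimes K$ admits a deformation sequence.

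Next I would verify the two remaining hypotheses of Theorem \ref{thm:negron-pevtsova}, namely that $R \rtimes K$ is finite-dimensional and augmented. Finite-dimensionality is immediate, since as a vector space $R \rtimes K = R \otimes K$ and hence $\dim_{\ku}(R \rtimes K) = \dim_{\ku} R \cdot \dim_{\ku} K < \infty$, both factors being finite-dimensional by assumption. For the augmentation, Proposition \ref{lema:equivariant} \ref{item:equivariant-1} shows that the $H$-action on $R$ descends to a $K$-action with $\pi(h \cdot q) = \wp(h) \cdot \pi(q)$; combining the augmentation compatibility of $R$ as an augmented $H$-module algebra (condition \ref{item:equivariant-Hmod}) with $\epsilon_K \circ \wp = \epsilon_H$ shows that $R$ is an augmented $K$-module algebra, so that $\epsilon_R \otimes \epsilon_K$ is an algebra map on $R \rtimes K$. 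In fact this augmentation is already encoded in the data of the deformation sequence above, via condition \ref{item:augmented-sequence} of Definition \ref{def:defor-seq}.

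With these verifications in hand, Theorem \ref{thm:negron-pevtsova} applies to the finite-dimensional augmented algebra $R \rtimes K$ equipped with the deformation sequence of Proposition \ref{lema:equivariant}, yielding at once that $R \rtimes K$ has fgc. Thus the theorem is essentially a formal corollary, once the smash-product deformation sequence is available.

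The real content is therefore not in this final deduction but in Proposition \ref{lema:equivariant}, where one must check all seven axioms of a deformation sequence for the smash product. I expect the main obstacles there (already dispatched in that proposition) to be, first, the flatness of $Q \rtimes H$ over $Z \otimes W$ — handled by realizing $Q \rtimes H$ as an invertible twisted tensor product $Q \ott H$, using bijectivity of the antipode of $H$ (Remark \ref{rem:antipode}) and the triviality conditions \ref{item:equivariant-W-trivial} and \ref{item:equivariant-Z-stable} to isolate the central subalgebra $Z \otimes W$ — and, second, the finiteness of the global dimension of $Q \rtimes H$, which follows from the Lorenz--Lorenz theorem since both $Q$ and $H$ have finite global dimension. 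Granting those, the present theorem requires only the bookkeeping above.
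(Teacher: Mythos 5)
Your proposal is correct and follows exactly the paper's own argument: the authors also prove Theorem \ref{thm:fgc-smash} by combining Proposition \ref{lema:equivariant} (which supplies the deformation sequence $Z \otimes W \hookrightarrow Q \rtimes H \twoheadrightarrow R \rtimes K$) with Theorem \ref{thm:negron-pevtsova}. Your additional bookkeeping --- checking that $R \rtimes K$ is finite-dimensional and augmented --- is left implicit in the paper but is the right thing to verify.
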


\begin{proof} This follows from Proposition \ref{lema:equivariant} and Theorem \ref{thm:negron-pevtsova}.
\end{proof}

We immediately conclude:

\begin{cor}\label{cor:fgc-boson}
Let $K$ be a finite-dimensional Hopf algebra and $R$ a  Hopf algebra in $\yd{K}$, also finite-dimensional.
If $K$ admits a deformation sequence
$\Cgot$ of Hopf algebras
and $R$ admits
a $\Cgot$-equivariant deformation sequence, 
 then the bosonization $R \# K$ has fgc. \qed
\end{cor}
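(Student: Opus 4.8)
The plan is to produce a single deformation sequence of algebras whose right-hand term is exactly $R \rtimes K$, and then apply the general finite-generation criterion of Theorem \ref{thm:negron-pevtsova}, which guarantees fgc for any finite-dimensional augmented algebra admitting a deformation sequence. Essentially all of the mathematical content has already been packaged into Proposition \ref{lema:equivariant}. Writing $\Cgot\colon W \overset{\jmath}{\hookrightarrow} H \overset{\wp}{\twoheadrightarrow} K$ for the given deformation sequence of Hopf algebras and $\Mgot\colon Z \overset{\iota}{\hookrightarrow} Q \overset{\pi}{\twoheadrightarrow} R$ for the given $\Cgot$-equivariant deformation sequence, part \ref{item:equivariant-2} of that proposition supplies the deformation sequence
\[
Z \otimes W \overset{\iota \rtimes \jmath}{\hookrightarrow} Q \rtimes H \overset{\pi \rtimes \wp}{\twoheadrightarrow} R \rtimes K.
\]
First I would simply invoke this to obtain a deformation sequence whose quotient is $R \rtimes K$.

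Next I would verify that $R \rtimes K$ meets the two standing hypotheses of Theorem \ref{thm:negron-pevtsova}, namely that it is finite-dimensional and augmented. Finite-dimensionality is immediate, since $R \rtimes K = R \otimes K$ as a vector space and both $R$ (by assumption) and $K$ (a finite-dimensional Hopf algebra) are finite-dimensional. For the augmentation, Proposition \ref{lema:equivariant}\ref{item:equivariant-1} shows that $R$ is a $K$-module algebra, and it is augmented as an $H$-module algebra by condition \ref{item:equivariant-Hmod} of Definition \ref{def:smash-product-def-seq}; the counit relation $\epsilon_R(h\cdot x)=\epsilon_K(h)\epsilon_R(x)$ then makes $\epsilon_R\otimes\epsilon_K\colon R\rtimes K\to\ku$ multiplicative, by a short computation using the counit axiom of $K$. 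With these two points in place, Theorem \ref{thm:negron-pevtsova} applies directly and yields that $R \rtimes K$ has fgc, finishing the argument.

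The visible proof is therefore essentially formal, and the genuine difficulty is not at this final step but lies entirely inside Proposition \ref{lema:equivariant}. There the two delicate verifications are that $Q \rtimes H$ is flat over the central subalgebra $Z \otimes W$ and that $Q \rtimes H$ has finite global dimension. The first I expect to be the main obstacle: it is handled by realizing $Q \rtimes H$ as a twisted tensor product $Q \ott H$ with an invertible twisting map (invertibility coming from bijectivity of the antipode of $H$) and applying the relevant flatness criterion, where conditions \ref{item:equivariant-W-trivial} and \ref{item:equivariant-Z-stable}—triviality of the $W$-action on $Q$ and of the $H$-action on $Z$—are precisely what force $Z\otimes W$ to be central and reduce the twist to an honest tensor product on that subalgebra. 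The finite global dimension of $Q\rtimes H$ follows from the finite global dimensions of $Q$ and $H$ by the Lorenz--Lorenz theorem. Once Proposition \ref{lema:equivariant} is in hand, the theorem is an immediate corollary obtained by assembling it with Theorem \ref{thm:negron-pevtsova}.
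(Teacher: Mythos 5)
Your proof is correct and follows the paper's own route exactly: the paper deduces this corollary immediately from Theorem \ref{thm:fgc-smash}, whose proof is precisely your argument — Proposition \ref{lema:equivariant}\ref{item:equivariant-2} supplies the deformation sequence terminating in $R\rtimes K$, and Theorem \ref{thm:negron-pevtsova} then yields fgc. The only step worth making explicit is that the bosonization $R\# K$ coincides with the smash product $R\rtimes K$ as an augmented algebra, so fgc, being a property of the underlying augmented algebra alone, transfers verbatim; this is implicit in your write-up just as it is in the paper.
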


\section{Applications to Nichols algebras}
\label{sec:examples}
\medbreak

The applications we present of our main results are based on the notion of the Nichols algebra of a braided vector space. We briefly recall the main features here and refer to \cite{andrus-leyva}, \cite{andrus-angiono} or \cite{andrus-schneider} for more 
details.

\subsection{Deformation sequences of Nichols algebras}\label{subsec:nichols-examples}

 Let $(V,c)$ be a braided vector space, that is, $V$ is a vector space and the map
$c \in \text{GL}(V \otimes V)$, called the {\em braiding}, satisfies the braid equation
\[ (c \otimes \id) (\id \otimes c)(c \otimes \id) =
(\id \otimes c)(c \otimes \id) (\id \otimes c).
\]
Then the tensor algebra $T(V)$ is a braided Hopf algebra in the sense of \cite{takeuchi-braided}. The 
{\em Nichols algebra} $\toba(V) \coloneqq T(V) / \Jc(V)$ is the quotient of
$T(V)$ by a graded Hopf ideal $\Jc(V)$,  determined by the property
\begin{align*}
\Pc (\toba(V)) &= V,
\end{align*}
where $\Pc(\toba(V))$ stands for the  space of   primitive elements. 
Thus $\toba(V)$ is a braided Hopf algebra in the sense of \cite{takeuchi-braided}.
Furthermore, if $\Bc = \bigoplus_{n \geq 0} \Bc^n$
is a graded braided Hopf algebra generated by $\Bc^1$ and $\Bc^1 \cong V$ as braided vector spaces, then there are morphisms of graded braided Hopf algebras
\[T(V) \twoheadrightarrow \Bc \twoheadrightarrow \toba(V)\]
which are the identity in degree one. A braided Hopf algebra like $\Bc$ above is called
a \emph{pre-Nichols algebra}.

\medbreak
Natural examples of braided vector spaces are the Yetter-Drinfeld modules 
over a Hopf algebra $H$ and, in turn, natural examples of braided Hopf algebras 
are the  Hopf algebras in $\yd{H}$. However a given braided vector space 
may arise in many ways as a Yetter-Drinfeld module 
over $H$ for many Hopf algebras  $H$, and similarly for braided Hopf algebras.
Precisely, we introduce the following terminology for further use:

\begin{itemize}[leftmargin=7ex]\renewcommand{\labelitemi}{$\circ$}
\item A \emph{realization} of a braided vector space $V$ over a Hopf algebra $H$
consists of an action and a coaction  of $H$ on $V$ 
such that $V$ becomes an object in $\yd{H}$ and 
the braiding of $V$ in $\yd{H}$ coincides with the original one.
If such a realization exists, then we say that $V$ \emph{can be realized} over $H$.

\medbreak
\item A \emph{realization} of a braided Hopf algebra $R$ over a Hopf algebra $H$
consists of an action and a coaction  of $H$ on $R$ 
such that $R$ becomes a Hopf algebra in $\yd{H}$ with the same multiplication,
comultiplication and  braiding  in $\yd{H}$ as  the original ones.
If such a realization exists, then we say that $R$ \emph{can be realized} over $H$.
\end{itemize}

\begin{remark} 
A realization of a braided vector space $V$  over $H$ induces realizations over $H$ of 
the Hopf algebras $T(V)$ and $\toba(V)$, but not necessarily of an arbitrary 
pre-Nichols algebra  $\Bc$ of $V$.
\end{remark}

\medbreak
We will analyze examples according to the following considerations. 

\begin{prop}\label{prop:toba-fgc}
Let $V$ be a braided vector space that can be realized over a finite-dimensional Hopf algebra $K$. 
Assume that the Nichols algebra $\toba(V)$ is finite-dimensional and has fgc. 
Then $\toba(V) \# K$ has fgc, provided that

\begin{enumerate}[leftmargin=7ex,label=\rm{(\roman*)}]

\medbreak
\item $K = \Bbbk G$, where $G$ is a finite group and $\operatorname{char} \Bbbk$ does not divide $\vert G\vert$; or

\medbreak
\item $K = \Bbbk^G$, where $G$ is a finite group.
\end{enumerate}  
\end{prop}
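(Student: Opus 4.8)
The plan is to recognize this statement as a direct instance of Theorem \ref{th:RtoRsmashH}, taken with $R = \toba(V)$. All of the homological machinery — the Lyndon--Hochschild--Serre reduction of Lemma \ref{lemma:SV}, the Hilbert-style finiteness of invariants of Lemma \ref{lemma:hilbert-invariants}, and the braided-commutative Noetherianity of Lemma \ref{lemma:braidedcommut-Noetherian} — is already assembled inside that theorem. Thus my task reduces entirely to checking that its hypotheses hold for $R = \toba(V)$ in each of the two cases.

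First I would verify that $\toba(V)$ is a finite-dimensional Hopf algebra in $\yd{K}$. Finite-dimensionality is assumed outright. For the Yetter--Drinfeld structure, I would invoke the fact recorded in the Remark immediately following the definition of a realization: a realization of the braided vector space $V$ over $K$ automatically induces a realization of its Nichols algebra $\toba(V)$ over $K$. Consequently $\toba(V)$ is a genuine Hopf algebra in $\yd{K}$, with multiplication, comultiplication, and braiding inherited from those of $V$.

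Next I would confirm that $K$ satisfies the hypothesis of Theorem \ref{th:RtoRsmashH} in each case. In case (i), $K = \ku G$ and the assumption $\car \ku \nmid \vert G\vert$ makes $\ku G$ semisimple by Maschke's theorem, which is exactly the semisimplicity the theorem requires. In case (ii), $K = \ku^G$ is the other algebra explicitly permitted there, and no further condition is needed. Since $\toba(V)$ has fgc by hypothesis, every assumption of Theorem \ref{th:RtoRsmashH} is in place, and applying it yields that $\toba(V) \# K$ has fgc.

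I expect no genuine obstacle: the statement is a specialization rather than a new result. The only point that requires any care is the passage from a realization of $V$ to a realization of $\toba(V)$ — that is, ensuring the Yetter--Drinfeld and braided-Hopf structure on the Nichols algebra is compatible with the original braiding of $V$ — and this is precisely what the cited Remark guarantees. Everything else is a verbatim invocation of Theorem \ref{th:RtoRsmashH}.
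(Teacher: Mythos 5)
Your proposal is correct and matches the paper's proof exactly: the paper's entire argument is ``By Theorem \ref{th:RtoRsmashH}'', with $R = \toba(V)$ understood. Your added verifications --- that the realization of $V$ induces a realization of $\toba(V)$ as a Hopf algebra in $\yd{K}$, and that Maschke's theorem gives semisimplicity of $\ku G$ in case (i) --- are precisely the routine checks the paper leaves implicit.
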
  

\pf
By Theorem \ref{th:RtoRsmashH}.
\epf

\begin{prop}\label{prop:def-seq}
Let $V$ be a braided vector space that can be realized over a finite-dimensional Hopf algebra $K$. 
Assume that the Nichols algebra $\toba(V)$ is finite-dimensional and  
that it  admits a deformation sequence 
\[\Bg\colon Z \overset{\iota}{\hookrightarrow} \Bc \overset{\pi}{\twoheadrightarrow} \toba(V)\]
where $\Bc$ is a pre-Nichols algebra of $V$. Then $\toba(V) \# K$ has fgc, provided that

\begin{enumerate}[leftmargin=7ex,label=\rm{(\roman*)}]

\item $K$ is semisimple; or

\medbreak
\item $K$ is cocommutative and $\Bg$ is $K$-equivariant; or  

\medbreak
\item\label{item:def-seq-C-equivariant} $K$ admits a deformation sequence of Hopf algebras
$\Cgot\colon W\overset{\jmath}{\hookrightarrow} H \overset{\wp}{\twoheadrightarrow} K$
such that $\Bg$ is  $\Cgot$-equivariant.  
\end{enumerate}
\end{prop}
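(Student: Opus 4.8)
The plan is to read off all three cases from the three main theorems of the paper, the only preliminary observation being that the bosonization and the smash product coincide as algebras. Since $V$ is realized over $K$, the finite-dimensional Nichols algebra $R \coloneqq \toba(V)$ is a Hopf algebra in $\yd{K}$, and the bosonization $R \# K$ is by definition the vector space $R \otimes K$ equipped with the smash-product multiplication; hence $R \# K = R \rtimes K$ as algebras. As fgc is defined entirely through the $\Ext$-algebra of the underlying algebra, $R \# K$ has fgc if and only if $R \rtimes K$ does. Moreover $R$ is an augmented $K$-module algebra: the realization makes it a $K$-module algebra, and its counit $\epsilon_R$ is a morphism in $\yd{K}$, so $\epsilon_R(h \cdot x) = \epsilon_K(h)\, \epsilon_R(x)$, which is exactly the compatibility required of an augmentation.

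With this reduction I would dispatch the three cases in turn. In case (i), $K$ is semisimple and $R$ is a finite-dimensional augmented $K$-module algebra admitting the deformation sequence $\Bg$; this is precisely the hypothesis of Theorem \ref{thm:K-semisimple-R-fgc}, which gives fgc of $R \rtimes K$. In case (ii), $K$ is cocommutative and $\Bg$ is $K$-equivariant, so $R$ admits a $K$-equivariant deformation sequence, and Theorem \ref{theorem:negron} applies. In case (iii), $K$ admits a deformation sequence $\Cgot$ of Hopf algebras and $\Bg$ is $\Cgot$-equivariant, which is exactly the setting of Theorem \ref{thm:fgc-smash} (equivalently Corollary \ref{cor:fgc-boson}). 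In each case the conclusion is fgc of $R \rtimes K = R \# K$.

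I expect the argument to be essentially a matter of matching hypotheses, so the content is organizational rather than technical. The single point that genuinely needs care is the compatibility of $K$-actions: the theorems produce fgc for $R \rtimes K$ formed with a specific $K$-module structure, and one must confirm this structure is the realization action used to build the bosonization. In cases (i) and (ii) this is immediate, since the $K$-action appearing in $\Bg$ restricts on $R$ to the realization action. In case (iii) the $K$-action on $R$ is the one induced from the $H$-action through $\wp$ via Proposition \ref{lema:equivariant}(i), so here I would verify that this induced action coincides with the realization, a compatibility that is part of what it means for $\Bg$ to be $\Cgot$-equivariant relative to the given realization. Finally, in contrast with Proposition \ref{prop:toba-fgc}, we do not assume separately that $\toba(V)$ has fgc: in each case the invoked theorem supplies it internally, through Theorem \ref{thm:negron-pevtsova}, from the mere existence of $\Bg$.
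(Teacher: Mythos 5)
Your proposal is correct and follows exactly the paper's own proof, which simply cites Theorems \ref{thm:K-semisimple-R-fgc}, \ref{theorem:negron} and \ref{thm:fgc-smash} for the three cases respectively; your additional bookkeeping (that $\toba(V)\# K = \toba(V)\rtimes K$ as algebras, that the realization makes $\toba(V)$ an augmented $K$-module algebra, and that the actions must be compatible) is accurate and merely makes explicit what the paper leaves implicit.
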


\pf
By Theorems \ref{thm:K-semisimple-R-fgc},  \ref{theorem:negron} and \ref{thm:fgc-smash}.
\epf

\medbreak
There are several reasons to consider
this framework. First, Nichols algebras carry vital information
on the structure of pointed Hopf algebras, see \cite{andrus-schneider}.
Second, pre-Nichols algebras fitting in a deformation sequence as above appeared naturally
in the literature. Third, examples of Nichols algebras $\toba(V)$ such that 
$\toba(V) \# \ku \varGamma$ has fgc for a suitable finite abelian group $\varGamma$ 
have been studied in \cites{aapw,andruskiewitsch-natale};
but the methods in loc.~cit. do not allow us to conclude that 
$\toba(V) \# K$ has fgc for other Hopf algebras $K$.
We contribute to this problem here.

\vspace{1ex}

\subsection{The quantum line}\label{subsec:equivariant-quantum line} 
In this subsection and the next we discuss the simplest examples of Nichols algebras
as an illustration of the considerations above.

\medbreak
Let $q$ lie in  $\kut$ and let $V$ be a vector space with $\dim V = 1$
and basis $\{v\}$.
Consider
the braided vector space 
$V_q \coloneqq (V, c)$ 
with braiding 
$c \in \text{GL}(V \otimes V)$ given by $c(v \otimes v) = q \, v \otimes v$. 
It is well-known, see e.g.\ \cite{andrus-schneider}, that 
the  Nichols algebra $\toba(V_q)$ has finite dimension if and only if 
\begin{align}\label{eq:q-hyp}
q \text{ is a root of 1}, &\text{ and } q \neq 1 \text{ when } \car \ku = 0.
\end{align} 
Assume in the rest of this subsection that $q$ is as in \eqref{eq:q-hyp}. 
Let $n \in \N$ be given by
\begin{align*}
n &= \begin{cases} p &\text{if } \car \ku = p >0 \text{ and } q =1;
\\ \ord q &\text{otherwise; notice that }(p, n) = 1 \text{ if } \car \ku = p >0.
\end{cases}
\end{align*}
Then the Nichols algebra $\toba (V_q) $ is isomorphic to $\ku[X]/\langle X^n \rangle$,
where $\ku[X]$ is the polynomial ring. Calling $\iota$ for the inclusion and $\pi$ for the natural projection, we claim that
\begin{equation}\label{eq:deformation-sequence-qline}
\Mgot_q\colon \ku[X^n]\overset{\iota}{\hookrightarrow} \ku[X] \overset{\pi}{\twoheadrightarrow} \ku[X]/\langle X^n \rangle \cong \toba (V_q),   
\end{equation}
is a deformation sequence, as the seven conditions of Definition~\ref{def:defor-seq} clearly hold.

\bigbreak
Realizations of 
$V_q$ in $\yd{K}$ are described by YD-pairs, a notion already 
implicit  in \cite{andrus-schneider-p3}.

\begin{definition} \cite{andrus-leyva}*{Example 20} 
Let $K$ be a Hopf algebra with antipode $\mathcal S$. A \emph{YD-pair} for $K$ is a pair
$(g, \chi)  \in G(K) \times \hom_{\text{alg}}(K, \Bbbk)$
such that
\begin{align*}
\chi(h)\,g  &= \chi(h_{(2)}) \, h_{(1)}\, g\, \mathcal S(h_{(3)}),& \text{for any } h \in K.
\end{align*}
If $(g, \chi)$ is a YD-pair for $K$, then $g\in Z(G(K))$, the center of the group $G(K)$ of group-like elements in $K$. 
\end{definition}

\medbreak
YD-pairs classify one-dimensional objects in $\yd{K}$: 
if $(g, \chi)$ is a YD-pair, then $\Bbbk_g^{\chi} = \Bbbk$
with action and coaction given by $\chi $ and $g$ respectively, is in $\yd{K}$;
the braiding of $\Bbbk_g^{\chi}$ is multiplication by $\chi(g)$.
Hence, $\Bbbk_g^{\chi}$ realizes $V_q$ if and only if $\chi(g) = q$.

\medbreak
The following fact 
was known under the hypothesis that $K$ is either a group algebra (of dimension prime to the characteristic of the base field) or a dual group algebra as in Proposition \ref{prop:toba-fgc};
otherwise it appears to be new.

\begin{prop}\label{exa:qline-bosonK}
Let $K$ be a finite-dimensional Hopf algebra that admits a YD-pair $(g, \chi)$  such that $\chi(g) = q$, so that
$\Bbbk_g^{\chi}$ realizes $V_q$ in $\yd{K}$. If $K$ is either semisimple or
cocommutative, then $\toba(V_q) \# K$ has fgc.
\end{prop}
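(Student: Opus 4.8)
The plan is to derive the statement as a direct application of Proposition \ref{prop:def-seq}, taking $\Bc = \ku[X] \cong T(V_q)$ to be the tensor algebra of $V_q$ and $\Bg = \Mgot_q$ to be the deformation sequence \eqref{eq:deformation-sequence-qline}. By the standing assumption \eqref{eq:q-hyp}, $\toba(V_q) \cong \ku[X]/\langle X^n\rangle$ is finite-dimensional, and $\Mgot_q\colon \ku[X^n]\hookrightarrow \ku[X]\twoheadrightarrow \toba(V_q)$ has already been checked to be a deformation sequence in the sense of Definition \ref{def:defor-seq}. Since $\ku[X] = T(V_q)$ is a pre-Nichols algebra of $V_q$ and the YD-pair $(g,\chi)$ with $\chi(g)=q$ realizes $V_q$ over $K$, this realization induces one on $T(V_q)$ and on $\toba(V_q)$, placing all three terms in $\yd{K}$. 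Thus the hypotheses of Proposition \ref{prop:def-seq} are in force, and only the case condition on $K$ remains to be handled.

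If $K$ is semisimple, I would simply quote Proposition \ref{prop:def-seq}\,(i); no equivariance is needed, and the conclusion is immediate.

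If $K$ is cocommutative, I would instead aim at Proposition \ref{prop:def-seq}\,(ii), for which I must verify that $\Mgot_q$ is $K$-equivariant in the sense of Definition \ref{def:equiv-defor-seq}. The crux is the observation that the $K$-action on $\ku[X] = T(V_q)$, determined on generators by $h\cdot v = \chi(h)\,v$, is the diagonal action on each tensor power $V_q^{\otimes k}$ and hence respects the grading: each homogeneous line $\ku X^k$ is a one-dimensional $K$-submodule, on which $h$ acts by the scalar $\chi^{*k}(h)$. It follows that every graded subspace of $\ku[X]$ is $K$-stable; in particular the subalgebra $\ku[X^n]=\bigoplus_{k\ge 0}\ku X^{nk}$ and the ideal $\langle X^n\rangle$ are $K$-stable. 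This makes $\ku[X^n]$ a $K$-module subalgebra, endows $\toba(V_q)=\ku[X]/\langle X^n\rangle$ with the induced action, and shows $\iota$ and $\pi$ to be $K$-linear, so that $\Mgot_q$ is a $K$-equivariant deformation sequence. I would then conclude by Proposition \ref{prop:def-seq}\,(ii).

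The only delicate point is the equivariance verification in the cocommutative case, and it reduces entirely to recording that the $K$-action on $T(V_q)$ preserves the grading; after that, the $K$-stability of $\ku[X^n]$ and $\langle X^n\rangle$ and the $K$-linearity of $\iota$ and $\pi$ are automatic. I therefore do not expect a real obstacle: the substance is already carried by Proposition \ref{prop:def-seq}, and this result is in essence the verification that the quantum-line deformation sequence fits its hypotheses in both listed cases.
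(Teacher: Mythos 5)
Your proof is correct and follows essentially the same route as the paper: the paper's own (very terse) proof likewise invokes Proposition \ref{prop:def-seq}, noting that the YD-pair realization makes $\ku[X]$ and $\ku[X^n]$ objects (indeed Hopf algebras) in $\yd{K}$ and that $\Mgot_q$ is thereby $K$-equivariant. Your extra detail --- the grading argument showing each $\ku X^k$ is a $K$-stable line, so $\ku[X^n]$ and $\langle X^n\rangle$ are $K$-stable --- is exactly the verification the paper leaves implicit.
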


\pf Proposition \ref{prop:def-seq} applies, 
since $\Bbbk [X]$ and $\Bbbk [X^n]$ become Hopf algebras in $\yd{K}$ and the deformation sequence $\Mgot_q$ in \eqref{eq:deformation-sequence-qline} is $K$-equivariant. 
\epf

When $K$ is neither semisimple nor cocommutative, we still can apply 
Proposition \ref{prop:def-seq} \ref{item:def-seq-C-equivariant}, i.e., Theorem \ref{thm:fgc-smash}.

\begin{prop}\label{exa:qline-bosonK2}
Let $K$ be a finite-dimensional Hopf algebra having a YD-pair $(g, \chi)$  
such that $\chi(g) = q$, so that $\Bbbk_g^{\chi}$ realizes $V_q$ in $\yd{K}$, and $\chi^n = \varepsilon$. 
Assume that 
\begin{itemize}[leftmargin=7ex]\renewcommand{\labelitemi}{$\circ$}
\item $K$ admits a deformation sequence of Hopf algebras $\Cgot: W \overset{\jmath}{\hookrightarrow} H \overset{\wp}{\twoheadrightarrow} K$,

\item there exists $\gamma \in G(H)$ such that $\wp(\gamma) = g$ and 
$(\gamma, \chi\wp)$ is a YD-pair for $H$, and

\item $\chi \wp_{\vert W} = \varepsilon$.
\end{itemize}
Then $\toba(V_q) \# K$ has fgc.
\end{prop}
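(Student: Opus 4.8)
The plan is to invoke Proposition~\ref{prop:def-seq}~\ref{item:def-seq-C-equivariant} (that is, Theorem~\ref{thm:fgc-smash}), so the entire task reduces to exhibiting the deformation sequence $\Mgot_q$ of \eqref{eq:deformation-sequence-qline}, with pre-Nichols algebra $\Bc = \ku[X] = T(V_q)$, as a $\Cgot$-equivariant one in the sense of Definition~\ref{def:smash-product-def-seq}. First I would use the YD-pair $(\gamma, \chi\wp)$ for $H$: since $\chi\wp(\gamma) = \chi(\wp(\gamma)) = \chi(g) = q$, the one-dimensional object $\ku_{\gamma}^{\chi\wp}$ realizes $V_q$ in $\yd{H}$. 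As recalled in the Remark following the definition of realization, this induces realizations over $H$ of $T(V_q) = \ku[X]$ and of $\toba(V_q)$ as Hopf algebras in $\yd{H}$; in particular all three terms of $\Mgot_q$ become augmented $H$-module algebras, with $H$ acting on the degree-one generator by $h \cdot X = \chi\wp(h)\, X$. Once the subalgebra $\ku[X^n]$ and the ideal $\langle X^n \rangle$ are seen to be $H$-stable, the maps $\iota$ and $\pi$ are morphisms of augmented $H$-module algebras, giving condition~\ref{item:equivariant-Hmod}.

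The heart of the argument is the verification of the two triviality conditions, which rests on the module-algebra formula
\[
h \cdot X^m = \Big(\sum \chi\wp(h\_{1})\cdots \chi\wp(h\_{m})\Big)\, X^m = (\chi\wp)^{*m}(h)\, X^m,
\]
where $(\chi\wp)^{*m}$ denotes the $m$-fold convolution power of the character $\chi\wp$ (Sweedler summation suppressed). For condition~\ref{item:equivariant-Z-stable}, I would observe that because $\wp$ is a coalgebra map one has $(\chi\wp)^{*n} = \chi^{*n}\circ\wp$; the hypothesis $\chi^n = \varepsilon$ (convolution power) then gives $\chi^{*n}\circ\wp = \varepsilon_K\circ\wp = \varepsilon_H$, so $h \cdot X^n = \varepsilon(h)\, X^n$ and $H$ acts trivially on $Z = \ku[X^n]$. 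For condition~\ref{item:equivariant-W-trivial}, since $W$ is a Hopf subalgebra of $H$ the iterated coproduct of any $w \in W$ lands in $W^{\otimes m}$, so the hypothesis $\chi \wp_{\vert W} = \varepsilon$ yields $(\chi\wp)^{*m}(w) = \sum \varepsilon(w\_{1})\cdots \varepsilon(w\_{m}) = \varepsilon(w)$, whence $w \cdot X^m = \varepsilon(w)\, X^m$ and $W$ acts trivially on $Q = \ku[X]$.

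With the three conditions in hand, $\Mgot_q$ is a $\Cgot$-equivariant deformation sequence, and Proposition~\ref{prop:def-seq}~\ref{item:def-seq-C-equivariant} yields that $\toba(V_q)\#K$ has fgc. Two compatibility checks round out the argument: that the $H$-action on $\toba(V_q)$ descends to the original $K$-action of $\ku_g^{\chi}$ (indeed $h \cdot X = \chi\wp(h)\, X$ reads $k \cdot X = \chi(k)\, X$ for $k = \wp(h)$, which is Proposition~\ref{lema:equivariant}~\ref{item:equivariant-1} made explicit), so that the smash product in \eqref{eq:equiv-def-seq} is built from the correct $K$-module algebra; and that $\langle X^n\rangle$ is $H$-stable, which is immediate once $H$ acts trivially on $X^n$ inside the module algebra $\ku[X]$. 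I expect the only genuinely delicate point to be the convolution-power bookkeeping establishing $(\chi\wp)^{*n} = \varepsilon_H$ together with the triviality of the $W$-action; everything else is a direct transcription of the realization into the axioms of Definition~\ref{def:smash-product-def-seq}.
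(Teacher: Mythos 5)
Your proposal is correct and follows the same route as the paper's proof: exhibit $\Mgot_q$ as $\Cgot$-equivariant via the YD-pair $(\gamma,\chi\wp)$ (giving \ref{item:equivariant-Hmod}), use $(\chi\wp)^{n}=\chi^{n}\wp=\varepsilon$ for \ref{item:equivariant-Z-stable} and $\chi\wp_{\vert W}=\varepsilon$ for \ref{item:equivariant-W-trivial}, then invoke Theorem~\ref{thm:fgc-smash}. The only difference is that you spell out the convolution-power bookkeeping and the descent to the original $K$-action, which the paper leaves implicit.
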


\begin{proof} We claim that the deformation sequence $\Mgot_q$ in \eqref{eq:deformation-sequence-qline} is $\Cgot$-equivariant as in Definition 
\ref{def:smash-product-def-seq}. The YD-pair $(\gamma, \chi\wp)$  
induces a realizations of $\toba(V_q)$, $\ku[X]$ and $\ku[X^n]$ as Hopf algebras in $\yd{H}$;
hence \ref{item:equivariant-Hmod} holds.
Now $H$ acts  on $X^n$ by $(\chi\wp)^n = \chi^n\wp$, so if $\chi^n= \varepsilon$, then 
\ref{item:equivariant-Z-stable} holds. The last assumption gives
\ref{item:equivariant-W-trivial}. The claim is proved, and therefore $\toba(V_q) \# K$ has fgc by
Theorem \ref{thm:fgc-smash}.
\end{proof}

\vspace{1ex}

\subsection{Quantum linear spaces}\label{subsec:equivariant-qls} 
Assume $\car \ku = p$. For $\theta\in \N$, we set $\I_{\theta} \coloneq \{1,2,\dots, \theta\}$. 
Fix a matrix $\bq = \left(q_{ij}\right)_{i, j \in \I_{\theta}}$ whose entries are roots of 1.  
Assume that
\begin{align*}
q_{ii} &\neq 1 & \text{ if } p &= 0,
\\
q_{ij}q_{ji} & = 1 &\text{ if }  i &\neq j \in \I_{\theta}.
\end{align*}
For $i \in \I_{\theta}$, set 
\begin{align}\label{eq:N_i}
N_{i} &= \begin{cases} p  &\text{if }  p >0 \text{ and } q_{ii} =1;
\\ \ord q_{ii} &\text{otherwise; so }(p, N_i) = 1 \text{ if }  p >0.
\end{cases}
\end{align}

Let $V$ be  a vector space of dimension $\theta$ with a fixed basis $v_1, \dots, v_{\theta}$.
Let $V_{\bq} \coloneqq (V, c)$ be  the braided vector space 
whose braiding  $c \in \text{GL}(V \otimes V)$ is given  by 
\begin{align}\label{eq:braiding-diagonal}
c(v_i \otimes v_j) &= q_{ij} v_j \otimes v_i & \text{for any } i,j \in \I_{\theta}.
\end{align}

\begin{lemma} The Nichols algebra $\toba(V_{\bq})$ has a presentation 
\begin{align*}
\toba(V_{\bq}) \cong \ku\langle X_1, \dots, X_{\theta} &\ \vert\   
X_{i}X_{j} -q_{ij}X_{j}X_{i}, \,  i \neq j \in \I_{\theta}; \ \ 
X_{i}^{N_i}, \, i\in \I_{\theta}  \rangle.
\end{align*} 
Therefore, the ordered monomials 
\begin{align*}
X_1^{h_1} \dots X_{\theta}^{h_{\theta}} :  \quad 
0 \leq h_i \leq N_i -1, \quad
i\in \I_{\theta},
\end{align*} 
form a basis for $\toba(V_{\bq})$ and in particular
$\dim \toba(V_{\bq}) = N_1 \dots N_{\theta}$.
\end{lemma}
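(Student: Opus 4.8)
The plan is to identify the abstract algebra $B \coloneqq \ku\langle X_1,\dots,X_\theta \mid X_iX_j - q_{ij}X_jX_i\ (i\neq j),\ X_i^{N_i}\rangle$ with $\toba(V_{\bq})$ by producing a surjection $B \twoheadrightarrow \toba(V_{\bq})$, computing a basis of $B$, and checking that this basis survives in $\toba(V_{\bq})$. The presentation, the monomial basis, and the dimension formula will all follow at once.

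First I would verify that the listed relations lie in the defining ideal $\Jc(V_{\bq})$, so that they hold in $\toba(V_{\bq})$ and the canonical projection $T(V)\twoheadrightarrow \toba(V_{\bq})$ factors through a surjection $B \twoheadrightarrow \toba(V_{\bq})$. In degree two, the relevant graded piece of $\Jc(V_{\bq})$ is the kernel of the quantum symmetrizer $\id + c$; restricted to the plane spanned by $v_i\otimes v_j$ and $v_j \otimes v_i$ for $i\neq j$, the operator $\id + c$ has matrix $\left(\begin{smallmatrix}1 & q_{ji}\\ q_{ij} & 1\end{smallmatrix}\right)$, whose determinant $1 - q_{ij}q_{ji}$ vanishes by hypothesis, and its kernel is spanned by $v_iv_j - q_{ij}v_jv_i$. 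For the power relations, restricting the braiding to the line $\ku v_i$ reduces matters to the quantum line of Subsection~\ref{subsec:equivariant-quantum line}: the one-variable Nichols algebra is $\ku[X_i]/\langle X_i^{N_i}\rangle$, so $v_i^{N_i}\in \Jc(V_{\bq})$.

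Next I would compute a basis of $B$ by a rewriting argument (Bergman's Diamond Lemma). Ordering $X_1 < \cdots < X_\theta$, I orient the relations as $X_jX_i \mapsto q_{ji}X_iX_j$ for $j>i$ and $X_i^{N_i}\mapsto 0$. The overlaps to resolve are the triple $X_kX_jX_i$ with $k>j>i$, which is confluent precisely because $q_{ij}q_{ji}=1$ makes the reordering scalars consistent, together with the self-overlaps of the power rule and its overlaps with the commutation rule, all of which resolve to $0$ (for instance $X_jX_i^{N_i} = q_{ji}^{N_i}X_i^{N_i}X_j = 0$). Confluence yields that the ordered monomials $X_1^{h_1}\cdots X_\theta^{h_\theta}$ with $0\le h_i\le N_i-1$ form a basis of $B$, so $\dim B = N_1\cdots N_\theta$.

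It remains to show that these monomials remain linearly independent in $\toba(V_{\bq})$, equivalently that $\dim \toba(V_{\bq})\ge N_1\cdots N_\theta$, which together with the surjection forces $B\cong \toba(V_{\bq})$. Here I would exploit that $q_{ij}q_{ji}=1$ means the double braiding between any two distinct lines $\ku v_i$ and $\ku v_j$ is the identity, whence $\toba(V_{\bq})$ factors as a braided tensor product $\toba(\ku v_1)\,\underline\otimes\cdots\underline\otimes\,\toba(\ku v_\theta)$ of quantum lines; as a vector space this is the ordinary tensor product, of dimension $\prod_i N_i$, with the ordered monomials as basis. Alternatively, the independence can be proved directly via the braided skew-derivations $\partial_i$ (with $\partial_i(v_j)=\delta_{ij}$), using that any element of positive degree annihilated by every $\partial_i$ vanishes in a Nichols algebra, and inducting on degree. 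This independence step --- ruling out any further collapse, i.e.\ any relation beyond the quadratic $q$-commutators and the powers --- is the crux of the argument; by contrast, the verification of the relations and the spanning count are routine. Combining the three steps gives $B\cong \toba(V_{\bq})$, hence the asserted presentation, the monomial basis, and $\dim \toba(V_{\bq}) = N_1\cdots N_\theta$.
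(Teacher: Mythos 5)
Your proof is correct, but note that the paper's own ``proof'' of this lemma is just a citation: it refers to \cite{andrus-schneider-p3} for characteristic $0$ and asserts that the case $p>0$ is straightforward. You supply an actual, self-contained argument, and your main route differs from that of the cited reference. There, the linear independence of the ordered monomials (the crux, as you correctly identify) is established via the braided skew-derivations $\partial_i$ --- the route you mention only as an alternative --- whereas your primary argument invokes Gra\~na's splitting lemma: since $q_{ij}q_{ji}=1$ makes the double braiding between distinct lines the identity, $\toba(V_{\bq})$ factors as the braided tensor product of the quantum lines $\toba(\ku v_i)\cong\ku[X_i]/\langle X_i^{N_i}\rangle$, giving $\dim\toba(V_{\bq})=N_1\cdots N_\theta$ at once; combined with your Diamond Lemma computation showing $\dim B = N_1\cdots N_\theta$ and the surjection $B\twoheadrightarrow\toba(V_{\bq})$, the identification follows. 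Both routes work uniformly in characteristic $p$, so your argument genuinely covers the case the paper dismisses as straightforward. One small imprecision worth fixing: in the Diamond Lemma step, the triple overlaps $X_kX_jX_i$ with $k>j>i$ resolve for \emph{any} scalars (both reduction paths yield the coefficient $q_{kj}q_{ki}q_{ji}$); where the hypothesis $q_{ij}q_{ji}=1$ actually enters is in guaranteeing that the discarded orientation of the relations, $X_iX_j-q_{ij}X_jX_i$ for $i<j$, reduces to zero under your rewriting rules --- without it, the presentation would force $X_iX_j=0$ and $B$ would collapse further. This misattribution does not affect the validity of the proof.
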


As 
in \cite{andrus-schneider-p3}, we refer to $\toba(V_{\bq})$ as a quantum linear space.

\pf
This was proved in \cite{andrus-schneider-p3} for  
$p=0$ and is straightforward to establish for $p >0$.
\epf

Let $\ku_{\bq}[X_1, \dots, X_{\theta}]$ be the quotient of the free algebra $\ku\langle X_1, \dots, X_{\theta} \rangle$ by the ideal generated by $X_{i}X_{j} -q_{ij}X_{j}X_{i}$ for all $i \neq j \in \I_{\theta}$. By abuse of notation,
we denote by the same symbol the image of $X_i$ in $\ku_{\bq}[X_1, \dots, X_{\theta}]$. 
We have in $\ku_{\bq}[X_1, \dots, X_{\theta}]$ the equality
\begin{align}\label{eq:qls-powers}
X_i^{m} X_j &= q_{ij}^mX_{j}X_{i}^m, & 
i \neq j &\in \I_{\theta}, \quad m \in \N.
\end{align}
For $i\in \I_{\theta}$, we set
\begin{align*}
M_i &\coloneqq \lcm \big\{\ord q_{ij},  j \in \I_{\theta} \backslash \{i\}\big\}.
\end{align*}
Then $X_i^{M_i }$ is central in $\ku_{\bq}[X_1, \dots, X_{\theta}]$ by \eqref{eq:qls-powers}.
Recall \eqref{eq:N_i} and assume that 
\begin{align}\label{eq:qls-condition}
&M_i  \text{ divides }   N_i, && \text{for all } i \in \I_{\theta}. 
\end{align}
Then the subalgebra $Z_{\bq} = \ku\langle X_1^{N_1}, \dots, X_{\theta}^{N_{\theta}}\rangle$ of 
$\ku_{\bq}[X_1, \dots, X_{\theta}]$ is central by \eqref{eq:qls-powers}.
Hence 
\begin{equation}\label{eq:deformation-sequence-qls}
  \Mgot_{\bq} :  Z_{\bq}\hookrightarrow \ku_{\bq}[X_1, \dots, X_{\theta}] \twoheadrightarrow \toba(V_{\bq}) \cong \ku_{\bq}[X_1, \dots, X_{\theta}]/\langle X_1^{N_1}, 
\dots, X_{\theta}^{N_{\theta}} \rangle
\end{equation}
  is a deformation sequence; we recover, by Theorem \ref{thm:negron-pevtsova}, 
the well-known fact that the
quantum linear space $\toba(V_{\bq})$ has fgc, under the assumption \eqref{eq:qls-condition}. 

\medbreak
\begin{prop}\label{exa:qls-bosonK} Assume that the matrix $\bq$ satisfies
\eqref{eq:qls-condition}.
Let $K$ be a finite-dimensional Hopf algebra that admits a family
of YD-pairs  $\left(g_i, \chi_i\right)_{i \in \I_{\theta}}$ such that 
\begin{align}
 \label{eq:qls-realizationK}
\chi_j(g_i) &= q_{ij},& \text{for all }  i,j &\in \I_{\theta},
\end{align}
so that the family $\left(g_i, \chi_i\right)_{i \in \I_{\theta}}$ gives a 
realization of $V_{\bq}$ in $\yd{K}$. If $K$ is either semisimple or
cocommutative, then $\toba(V_{\bq}) \# K$ has fgc.
\end{prop}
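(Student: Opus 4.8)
The plan is to apply Proposition \ref{prop:def-seq} to the braided vector space $V = V_{\bq}$, feeding it the deformation sequence $\Mgot_{\bq}$ of \eqref{eq:deformation-sequence-qls}, whose middle term $\ku_{\bq}[X_1, \dots, X_{\theta}]$ is a pre-Nichols algebra of $V_{\bq}$. The hypotheses common to both cases are already in hand: the family of YD-pairs $(g_i,\chi_i)_{i \in \I_{\theta}}$ realizes $V_{\bq}$ in $\yd{K}$ by \eqref{eq:qls-realizationK}, and $\toba(V_{\bq})$ is finite-dimensional. It therefore remains to activate the correct branch of Proposition \ref{prop:def-seq} in each of the two cases.

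For $K$ semisimple, the realization makes $\toba(V_{\bq})$ a Hopf algebra in $\yd{K}$, in particular an augmented $K$-module algebra, while $\Mgot_{\bq}$ is already a deformation sequence of algebras. No compatibility between the $K$-action and $\Mgot_{\bq}$ is required here, so Proposition \ref{prop:def-seq}\,(i) applies at once.

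For $K$ cocommutative, I must upgrade $\Mgot_{\bq}$ to a \emph{$K$-equivariant} deformation sequence, that is, exhibit $Z_{\bq}$ and $\ku_{\bq}[X_1, \dots, X_{\theta}]$ as $K$-module algebras with $\iota$ and $\pi$ being $K$-linear. The realization equips $T(V_{\bq})$ with a $K$-action under which each $v_i$ spans the one-dimensional module $\ku^{\chi_i}_{g_i}$, so that $h \cdot (v_i v_j) = \chi_i(h_{(1)})\chi_j(h_{(2)})\, v_i v_j$, the scalar being the value at $h$ of the convolution product of $\chi_i$ and $\chi_j$. The crucial observation is that cocommutativity of $K$ forces $\chi_i(h_{(1)})\chi_j(h_{(2)}) = \chi_j(h_{(1)})\chi_i(h_{(2)})$ for all $h$; hence each defining relation $X_iX_j - q_{ij}X_jX_i$ is an eigenvector, the two-sided ideal these relations generate is $K$-stable, and the $K$-action descends to $\ku_{\bq}[X_1, \dots, X_{\theta}]$. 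Likewise each generator $X_i^{N_i}$ of $Z_{\bq}$ is an eigenvector (for the $N_i$-fold convolution power of $\chi_i$), so $Z_{\bq}$ is a $K$-module subalgebra, and $\iota$, $\pi$ are then manifestly $K$-linear. Thus $\Mgot_{\bq}$ is $K$-equivariant and Proposition \ref{prop:def-seq}\,(ii) applies.

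The main obstacle is the cocommutative case, and specifically the step of checking that the quantum-plane ideal is preserved by the $K$-action. As the remark following the definition of realization warns, a realization of $V_{\bq}$ need not induce a realization of an arbitrary pre-Nichols algebra, so the $K$-stability of $\ku_{\bq}[X_1, \dots, X_{\theta}]$ cannot be assumed and must be verified directly. Cocommutativity is exactly the hypothesis that makes the two orderings of the characters agree, turning the relations $X_iX_j - q_{ij}X_jX_i$ into eigenvectors; absent it, one would instead have to argue that the subspace they span is a genuine Yetter--Drinfeld submodule, which is the more delicate situation addressed by Proposition \ref{prop:def-seq}\,(iii).
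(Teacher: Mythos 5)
Your proof is correct and follows essentially the same route as the paper: both reduce to Proposition \ref{prop:def-seq} applied to the deformation sequence $\Mgot_{\bq}$ of \eqref{eq:deformation-sequence-qls}, using branch (i) for $K$ semisimple and branch (ii), after verifying $K$-equivariance, for $K$ cocommutative.

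One correction to your closing paragraph, though it does not affect the validity of your argument. The paper verifies equivariance by noting that the ideal generated by the elements $X_iX_j - q_{ij}X_jX_i$ is a subobject in $\yd{K}$, with \emph{no} cocommutativity assumption, and this is in fact automatic for any $K$ admitting the YD-pairs: the braiding $c\colon \ku_{g_i}^{\chi_i}\ot\ku_{g_j}^{\chi_j}\to \ku_{g_j}^{\chi_j}\ot\ku_{g_i}^{\chi_i}$ is the nonzero scalar $q_{ij}$ times the flip, and since $c$ is a morphism in $\yd{K}$ (in particular $K$-linear), comparing $c(h\cdot(v_i\ot v_j))$ with $h\cdot c(v_i\ot v_j)$ forces $\chi_i(h_{(1)})\chi_j(h_{(2)}) = \chi_j(h_{(1)})\chi_i(h_{(2)})$ for all $h\in K$. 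So your eigenvector computation actually goes through for arbitrary $K$; cocommutativity is not, as you put it, exactly the hypothesis that stabilizes the ideal --- it is needed only to invoke Theorem \ref{theorem:negron} through Proposition \ref{prop:def-seq}(ii). Relatedly, Proposition \ref{prop:def-seq}(iii) does not address a failure of Yetter--Drinfeld stability of the relations; it addresses the different situation where $K$ itself is neither semisimple nor cocommutative, which is why it requires a deformation sequence of Hopf algebras for $K$.
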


This was known for $K$ a group algebra or a dual group algebra as in Proposition \ref{prop:toba-fgc}.

\begin{proof}
The family  $\left(g_i, \chi_i\right)_{i \in \I_{\theta}}$  induces a realization  of 
the tensor algebra $T(V_{\bq})$ as a Hopf algebra in $\yd{K}$; since the ideal generated by $X_{i}X_{j} -q_{ij}X_{j}X_{i}$ for all $i \neq j \in \I_{\theta}$ is  a subobject in $\yd{K}$, we see that
$\ku_{\bq}[X_1, \dots, X_{\theta}]$ is realized as a Hopf algebra in $\yd{K}$. 
Also $Z_{\bq} = \ku\langle X_1^{N_1}, \dots, X_{\theta}^{N_{\theta}}\rangle$ is a 
central Hopf subalgebra of $\ku_{\bq}[X_1, \dots, X_{\theta}]$ in $\yd{K}$;
i.e., $\Mgot_{\bq}$ is $K$-equivariant.
We can then apply Proposition \ref{prop:def-seq}.
\end{proof}

When $K$ is neither semisimple nor cocommutative, we still can apply 
Proposition \ref{prop:def-seq} \ref{item:def-seq-C-equivariant}, i.e., Theorem \ref{thm:fgc-smash}.

\begin{prop}\label{exa:qls-bosonK2}
Assume that the matrix $\bq$ satisfies \eqref{eq:qls-condition}.
Let $K$ be a finite-dimensional Hopf algebra that admits a family
of YD-pairs  $\left(g_i, \chi_i\right)_{i \in \I_{\theta}}$ such that 
\eqref{eq:qls-realizationK} holds and 
\begin{align}\label{eq:qls-bosonK-condition}
\chi_i^{N_i} &= \varepsilon,& \text{for all }  i &\in \I_{\theta}.
\end{align}
Assume that $K$ admits a deformation sequence of Hopf algebras $\Cgot: W \overset{\jmath}{\hookrightarrow} H \overset{\wp}{\twoheadrightarrow} K$ such that 
\begin{itemize}[leftmargin=7ex]\renewcommand{\labelitemi}{$\circ$}
\item  there exists $\gamma_{i} \in G(H)$ such that $\wp(\gamma_{i}) = g_{i}$ and 
$\left(\gamma_{i}, \chi_{i}\wp\right)$ is a YD-pair for $H$, for all $i\in \I_{\theta}$, and 

\item $\chi_i \wp_{\vert W} = \varepsilon$ for all $i\in \I_{\theta}$.
\end{itemize}
Then $\toba(V_{\bq}) \# K$ has fgc.
\end{prop}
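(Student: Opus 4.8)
The plan is to prove the proposition by verifying that the deformation sequence $\Mgot_{\bq}$ of \eqref{eq:deformation-sequence-qls} is $\Cgot$-equivariant in the sense of Definition \ref{def:smash-product-def-seq}, and then to invoke Theorem \ref{thm:fgc-smash} (equivalently, Proposition \ref{prop:def-seq}\ref{item:def-seq-C-equivariant}). This runs exactly parallel to the quantum line case treated in Proposition \ref{exa:qline-bosonK2}, now carried out for the $\theta$ generators simultaneously. Since all of the finite-generation content is already packaged inside Theorem \ref{thm:fgc-smash}, the work reduces to checking the three conditions \ref{item:equivariant-Hmod}, \ref{item:equivariant-W-trivial}, and \ref{item:equivariant-Z-stable}.

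First I would build the $H$-realization. The family of YD-pairs $(\gamma_i, \chi_i\wp)_{i\in\I_\theta}$ for $H$ satisfies $\chi_j\wp(\gamma_i) = \chi_j(g_i) = q_{ij}$ by \eqref{eq:qls-realizationK}, so the associated one-dimensional Yetter--Drinfeld modules assemble into a realization of $V_{\bq}$ in $\yd H$ whose braiding is exactly \eqref{eq:braiding-diagonal}. As in the proof of Proposition \ref{exa:qls-bosonK}, the quadratic relations $X_iX_j - q_{ij}X_jX_i$ span a Yetter--Drinfeld subobject, so $\ku_{\bq}[X_1,\dots,X_\theta]$ is realized as a Hopf algebra in $\yd H$ and $Z_{\bq}$ as a central Hopf subalgebra; consequently $\iota$ and $\pi$ become morphisms of augmented $H$-module algebras, giving \ref{item:equivariant-Hmod}. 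Next I would check the two triviality conditions, which is where \eqref{eq:qls-bosonK-condition} and $\chi_i\wp_{\vert W}=\varepsilon$ enter. On the generator $X_i$ the algebra $H$ acts through the character $\chi_i\wp$, so on the central generator $X_i^{N_i}$ of $Z_{\bq}$ it acts through the convolution power $(\chi_i\wp)^{N_i} = (\chi_i^{N_i})\wp = \varepsilon\,\wp = \varepsilon_H$, using \eqref{eq:qls-bosonK-condition}; hence $H$ acts trivially on $Z_{\bq}$, yielding \ref{item:equivariant-Z-stable}. Likewise $W$ acts on each $X_i$ through $\chi_i\wp_{\vert W}=\varepsilon$, hence trivially on all of $\ku_{\bq}[X_1,\dots,X_\theta]$, yielding \ref{item:equivariant-W-trivial}. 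With all three conditions in hand, Theorem \ref{thm:fgc-smash} gives that $\toba(V_{\bq}) \# K$ has fgc.

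I do not expect a serious obstacle here, since the substantive homological input lives entirely in Theorem \ref{thm:fgc-smash}. The only point requiring care is the bookkeeping inside \ref{item:equivariant-Hmod}: confirming that the $H$-realization descends from the tensor algebra to the quotient $\ku_{\bq}[X_1,\dots,X_\theta]$ (that the quadratic relations form a subobject) and that $Z_{\bq}$ is genuinely a central Hopf subalgebra in $\yd H$. This is routine because the identical statement over $K$ already appears in Proposition \ref{exa:qls-bosonK}, and the convolution identity $(\chi_i\wp)^{N_i} = \chi_i^{N_i}\wp$ holds simply because $\wp$ is a coalgebra map preserving counits; so tracking the two characters $\chi_i\wp$ and $\chi_i\wp_{\vert W}$ suffices and no genuinely new difficulty arises.
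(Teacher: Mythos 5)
Your proposal is correct and follows essentially the same route as the paper's proof: realize $V_{\bq}$, $\ku_{\bq}[X_1,\dots,X_\theta]$, and $Z_{\bq}$ over $H$ via the YD-pairs $(\gamma_i,\chi_i\wp)$ to get \ref{item:equivariant-Hmod}, use $(\chi_i\wp)^{N_i}=\chi_i^{N_i}\wp=\varepsilon$ for \ref{item:equivariant-Z-stable} and $\chi_i\wp_{\vert W}=\varepsilon$ for \ref{item:equivariant-W-trivial}, then apply Theorem \ref{thm:fgc-smash}. The only difference is that you spell out details (the relations ideal being a Yetter--Drinfeld subobject, trivial action on generators propagating to the whole module algebra) that the paper delegates to the proof of Proposition \ref{exa:qls-bosonK}.
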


\begin{proof} As in the proof of Proposition \ref{exa:qls-bosonK}, the family $\left(g_i, \chi_i\right)_{i \in \I_{\theta}}$ gives a 
realization of $V_{\bq}$ in $\yd{K}$. As argued above, 
$\Mgot_{\bq}$ in \eqref{eq:deformation-sequence-qls} is a deformation sequence because of 
\eqref{eq:qls-condition}.
We claim that $\Mgot_{\bq}$ is $\Cgot$-equivariant as in Definition 
\ref{def:smash-product-def-seq}. Since $\chi_{j}\wp (\gamma_i) = \chi_{j}(g_i) = q_{ij}$ for all 
$i, j\in \I_{\theta}$, 
the family $(\gamma_i, \chi_i\wp)$  of YD-pairs
induces  realizations of $\toba(V_{\bq})$, $\ku_{\bq}[X_1, \dots, X_{\theta}]$ 
and $Z_{\bq}$ as Hopf algebras in $\yd{H}$;
hence \ref{item:equivariant-Hmod} holds.

Now $H$ acts  on $X_{i}^{N_i}$ by $(\chi_i\wp)^{N_i} = \chi_i^{N_i}\wp$; then 
\ref{item:equivariant-Z-stable} holds by \eqref{eq:qls-bosonK-condition}. The last assumption gives
\ref{item:equivariant-W-trivial}. The claim is proved, hence $\toba(V_{\bq}) \# K$ has fgc by
Theorem \ref{thm:fgc-smash}.
\end{proof}

\vspace{1ex}

\subsection{Nichols algebras of diagonal type}\label{subsec:equivariant-cartan type}
We discuss informally how the methods exposed here apply to finite-dimensional 
Nichols algebras of diagonal type. See, e.g., the surveys \cites{andrus-leyva,andrus-angiono}
for detailed expositions, unexplained terminology and more references.

\medbreak
 In this subsection, set $\car \ku = 0$. Let $\theta\in \N$ and 
fix a matrix $\bq = \left(q_{ij}\right)_{i, j \in \I_{\theta}}$ whose entries are roots of 1.
We assume that the associated Dynkin diagram is connected.
Let $V$ be  a $\ku$-vector space of dimension $\theta$ with a fixed basis $\{v_1, \dots, v_{\theta}\}$.
Let $V_{\bq} \coloneqq (V, c)$ be the braided vector space 
whose braiding  $c \in \text{GL}(V \otimes V)$ is given  by \eqref{eq:braiding-diagonal}.
We summarize the main features of the theory:

\medbreak
\begin{itemize}  [leftmargin=7ex]
\item The classification of the matrices $\bq$ such that $\dim \toba(V_{\bq}) < \infty$
was achieved in \cite{heckenberger}. 
In \cite{andrus-angiono} an organization of the classification into several types was proposed; these types are:
\end{itemize}

\begin{itemize} [leftmargin=12ex] \renewcommand{\labelitemi}{$\circ$}
\medbreak
\item  Cartan type, related to finite-dimensional simple Lie algebras in $\car 0$.

\medbreak
\item Super type, related to finite-dimensional simple contragredient  Lie superalgebras in $\car 0$. 

\medbreak
\item Modular type, related to finite-dimensional simple Lie algebras or superalgebras in $\car> 0$,
not in the previous classes. 

\medbreak
\item UFO type, 12 examples not known to be related to Lie theory (yet).
\end{itemize} 

\medbreak
\begin{itemize}  [leftmargin=7ex]
\item The defining relations of the finite-dimensional $\toba(V_{\bq})$ were described in
\cite{angiono-crelle}. Motivated by some arguments in the proof and by the theory of quantum groups
developed by De Concini and Procesi, a \emph{distinguished} pre-Nichols algebra 
$\widetilde{\toba}(V_{\bq})$ was introduced in \cite{angiono-transfgps}. Furthermore, 
$\widetilde{\toba}(V_{\bq})$ has a normal braided Hopf subalgebra $Z^+(V_{\bq})$, see 
 \cite{angiono-transfgps}*{Theorem 31}. 

 \medbreak
 \item  We follow now the exposition in \cite{aay}*{Subsection 4.5}. Assume that the condition
 \cite{aay}*{Condition (4.26)} holds; notice that this extends \eqref{eq:qls-bosonK-condition}. 
 Then $Z^+(V_{\bq})$ is central in $\widetilde{\toba}(V_{\bq})$ and we have an exact sequence 
 \begin{equation}
 \label{defdiagonal}
Z^+(V_{\bq}) \overset{\iota}{\hookrightarrow} \widetilde{\toba}(V_{\bq}) \overset{\pi}{\twoheadrightarrow} \toba(V_{\bq})
\end{equation}
of braided Hopf algebras, see \cite{andrus-natale-braided} for this notion.
Now, for \eqref{defdiagonal} to be a deformation sequence, 
we need $\widetilde{\toba}(V_{\bq})$ to have finite global dimension.
If $\bq$ is of Cartan type, then $\widetilde{\toba}(V_{\bq})$
has finite global dimension by standard arguments, and one can check that \eqref{defdiagonal} is a deformation sequence. Otherwise, one of the degree 1 generators of $\widetilde{\toba}(V_{\bq})$ is nilpotent, which prevents $\widetilde{\toba}(V_{\bq})$ from having finite global dimension.

\medbreak
\item If $Z \overset{\iota}{\hookrightarrow} \Bc \overset{\pi}{\twoheadrightarrow} \toba(V_{\bq})$
is a deformation sequence where $\Bc$ is a pre-Nichols algebra of $V_{\bq}$, then
$\GK \Bc < \infty$ by Remark \ref{rem:Noetherian}. Up to a short list of exceptions, it is known that $\Bc$ is a quotient of $\widetilde{\toba}(V_{\bq})$, see \cites{andrus-sanmarco,angiono-etal} 
and references therein.
Therefore, when $\bq$ is not of Cartan type, $\Bc$ has infinite global dimension by the same argument as above. In conclusion, it appears that $\toba(V_{\bq})$ has a deformation sequence
only when $\bq$ is of Cartan type, up to some mild conditions on $\bq$.

\medbreak
\item By a different technique, it was shown in \cite{aapw} that $\toba(V_{\bq})$ has fgc
when $\bq$ "belongs to a family",
which leaves open only 27 examples of modular or UFO type.
\end{itemize}

\medbreak
In conclusion, the methods of the present paper apply in Cartan type. 
See \cite{aay}*{Subsection 4.5} for details of the following arguments. 
We need to introduce some notation.
Fix a matrix $\bq$ of Cartan type, i.~e., there exists a finite Cartan matrix 
$\mathbf{a} = (a_{ij})_{i, j \in \I_{\theta}}$ such that $q_{ij}q_{ji} = q_{ii}^{a_{ij}}$ for all 
$i \neq j \in \I_{\theta}$. 
Let $\varDelta_+$ be the set of positive roots corresponding to $\mathbf{a}$
with subset of simple roots $\{\alpha_{i}: i \in \I_{\theta}\}$. 
For $\alpha = \sum_{i \in \I_{\theta}} a_i \alpha_i$, $\beta = \sum_{i \in \I_{\theta}} b_i \alpha_i \in \varDelta_+$, we set:
\[
q_{\alpha\beta} = \prod_{i, j \in \I_{\theta}} q_{ij}^{a_ib_j} \qquad \text{ and } \qquad 
N_{\beta} = \ord q_{\beta\beta}.
\]
As explained in \cite{aay}, there exists $X_{\beta} \in \widetilde{\toba}(V_{\bq})$ (called $e_{\beta}$
in \emph{loc. cit.}), defined using the Weyl group automorphisms and which is in fact an
iterated braided commutator.
Then
\begin{align*}
Z^+(V_{\bq}) &= \ku\langle X_{\beta}^{N_{\beta}}: \beta \in  \varDelta_+\rangle.
\end{align*}
For $Z^+(V_{\bq})$ to be central in $\widetilde{\toba}(V_{\bq})$ we need 
to assume, cf. \cite{aay}*{Condition (4.26)}:
\begin{align}\label{eq:cartan-centrality}
q_{\alpha\beta}^{N_{\beta}} &= 1, & \text{for all } \alpha, \beta \in  \varDelta_+.
\end{align}
\begin{prop}\label{exa:cartan-bosonK}
Let $\bq$ be a matrix of Cartan type
satisfying \eqref{eq:cartan-centrality}. 
Let $K$ be a finite-dimensional Hopf algebra that admits a family
of YD-pairs  $\left(g_i, \chi_i\right)_{i \in \I_{\theta}}$ such that 
\eqref{eq:qls-realizationK} holds, so that it gives a 
realization of $V_{\bq}$ in $\yd{K}$. 
Assume that either $K$ is semisimple; or $K$ is cocommutative and \eqref{defdiagonal} is $K$-equivariant.
Then $\toba(V_{\bq}) \# K$ has fgc.
\end{prop}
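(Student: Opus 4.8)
The plan is to verify the hypotheses of Proposition \ref{prop:def-seq} for the sequence \eqref{defdiagonal}, following the template of the proof of Proposition \ref{exa:qls-bosonK}.

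First I would confirm that \eqref{defdiagonal} is a deformation sequence in the sense of Definition \ref{def:defor-seq}, with pre-Nichols algebra $\Bc = \widetilde{\toba}(V_{\bq})$. As recalled in the discussion preceding this proposition, for $\bq$ of Cartan type the centrality condition \eqref{eq:cartan-centrality} guarantees that $Z^+(V_{\bq})$ is central in $\widetilde{\toba}(V_{\bq})$ (see \cite{aay}*{Subsection 4.5}). The PBW basis of the distinguished pre-Nichols algebra exhibits $\widetilde{\toba}(V_{\bq})$ as free of finite rank over $Z^+(V_{\bq})$ (giving module-finiteness and flatness), $Z^+(V_{\bq}) = \ku\langle X_{\beta}^{N_{\beta}} : \beta \in \varDelta_+\rangle$ is a commutative polynomial algebra and hence smooth, and—crucially—$\widetilde{\toba}(V_{\bq})$ has finite global dimension by the standard arguments available in Cartan type.

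Next I would record the realization. The YD-pairs $\left(g_i, \chi_i\right)_{i \in \I_{\theta}}$ satisfying \eqref{eq:qls-realizationK} realize $V_{\bq}$ in $\yd{K}$, hence make $\toba(V_{\bq})$ a Hopf algebra in $\yd{K}$, in particular a finite-dimensional augmented $K$-module algebra. With the deformation sequence \eqref{defdiagonal} in hand, Proposition \ref{prop:def-seq}(i) applies directly when $K$ is semisimple, requiring only the realization of $V_{\bq}$ over $K$ together with the existence of \eqref{defdiagonal} (note that no equivariance of the sequence is needed in this case). When $K$ is cocommutative, the standing hypothesis that \eqref{defdiagonal} is $K$-equivariant lets us invoke Proposition \ref{prop:def-seq}(ii). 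In either case we conclude that $\toba(V_{\bq}) \# K$ has fgc.

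The main obstacle is confined to the first step: establishing that \eqref{defdiagonal} is a deformation sequence, and in particular that $\widetilde{\toba}(V_{\bq})$ has finite global dimension. This is exactly the point at which the argument is restricted to Cartan type, since outside Cartan type one of the degree-one generators of the distinguished pre-Nichols algebra is nilpotent, forcing infinite global dimension; everything else reduces to the structure theory of $\widetilde{\toba}(V_{\bq})$ and a routine application of Proposition \ref{prop:def-seq}.
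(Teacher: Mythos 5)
Your proposal is correct and follows essentially the same route as the paper: both reduce the statement to verifying that \eqref{defdiagonal} is a deformation sequence---with the finite global dimension of $\widetilde{\toba}(V_{\bq})$ in Cartan type as the key point, guaranteed by \eqref{eq:cartan-centrality} for centrality---and then invoke Proposition \ref{prop:def-seq}(i) in the semisimple case and Proposition \ref{prop:def-seq}(ii) in the cocommutative, $K$-equivariant case. Your additional details (PBW basis giving module-finiteness and flatness over $Z^+(V_{\bq})$, smoothness of the central subalgebra) simply flesh out what the paper compresses into ``one can check that \eqref{defdiagonal} is a deformation sequence.''
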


For the case when $K$ is a group algebra or a dual group algebra, see \cite{aapw}
and \cite{MPSW}.

\pf As discussed above, $\gldim \widetilde{\toba}(V_{\bq}) < \infty$ in this case, so \eqref{defdiagonal} is a deformation sequence and Proposition \ref{prop:def-seq} applies.
\epf

When $K$ is neither semisimple nor cocommutative, we  can apply 
Proposition \ref{prop:def-seq} \ref{item:def-seq-C-equivariant}.

\begin{prop}\label{exa:cartan-bosonK2}
Let $\bq$ be a matrix of Cartan type satisfying \eqref{eq:cartan-centrality}.
Let $K$ be a finite-dimensional Hopf algebra admitting a family of YD pairs $\left(g_i, \chi_i\right)_{i \in \I_{\theta}}$ such that
\eqref{eq:qls-realizationK} and
\begin{align}\label{eq:cartan-bosonK-condition}
\chi_{\beta}^{N_{\beta}} &= \varepsilon,& \text{for all }  \beta &\in \varDelta_+,
\end{align}
hold, where for any $\beta = \sum_{i \in \I_{\theta}} b_i \alpha_i  \in \varDelta_+$, we set
\begin{align*}
\chi_{\beta} &= \prod_{i \in \I_{\theta}} q_{ij}^{b_i}.
\end{align*}

Assume that $K$ has a deformation sequence of Hopf algebras $\Cgot: W \overset{\jmath}{\hookrightarrow} H \overset{\wp}{\twoheadrightarrow} K$ such that 
\begin{itemize}[leftmargin=7ex]\renewcommand{\labelitemi}{$\circ$}
\item  there exists $\gamma_{i} \in G(H)$ such that $\wp(\gamma_{i}) = g_{i}$ and 
$\left(\gamma_{i}, \chi_{i}\wp\right)$ is a YD-pair for $H$, for all $i\in \I_{\theta}$, and 

\item $\chi_i \wp_{\vert W} = \varepsilon$ for all $i\in \I_{\theta}$.
\end{itemize}
Then $\toba(V_{\bq}) \# K$ has fgc.
\end{prop}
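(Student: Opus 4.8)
The plan is to follow the blueprint of the proof of Proposition \ref{exa:qls-bosonK2}, replacing the quantum linear space deformation sequence $\Mgot_{\bq}$ by the sequence \eqref{defdiagonal}. First I would record that, exactly as in Proposition \ref{exa:cartan-bosonK}, the family $(g_i, \chi_i)_{i \in \I_{\theta}}$ together with \eqref{eq:qls-realizationK} gives a realization of $V_{\bq}$ in $\yd{K}$; and that, since $\bq$ is of Cartan type satisfying \eqref{eq:cartan-centrality}, the discussion preceding Proposition \ref{exa:cartan-bosonK} shows $\gldim \widetilde{\toba}(V_{\bq}) < \infty$, so that \eqref{defdiagonal} is indeed a deformation sequence of braided Hopf algebras. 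The entire task then reduces to verifying that \eqref{defdiagonal} is $\Cgot$-equivariant in the sense of Definition \ref{def:smash-product-def-seq}, after which Theorem \ref{thm:fgc-smash} yields fgc for $\toba(V_{\bq}) \# K$.

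To obtain condition \ref{item:equivariant-Hmod}, I would use the hypothesis that each $(\gamma_i, \chi_i\wp)$ is a YD-pair for $H$: since $\chi_j\wp(\gamma_i) = \chi_j(g_i) = q_{ij}$ by \eqref{eq:qls-realizationK}, the family $(\gamma_i, \chi_i\wp)_{i \in \I_{\theta}}$ realizes $V_{\bq}$ in $\yd{H}$ with the same braiding, and this realization promotes all three terms $Z^+(V_{\bq})$, $\widetilde{\toba}(V_{\bq})$ and $\toba(V_{\bq})$ to Hopf algebras in $\yd{H}$ with $H$-linear maps $\iota$ and $\pi$. For condition \ref{item:equivariant-Z-stable} I would observe that $X_{\beta}$ is homogeneous of degree $\beta = \sum_i b_i\alpha_i$, so $H$ acts on it through the character $\chi_{\beta}\wp$, where $\chi_{\beta} = \prod_i \chi_i^{b_i}$; consequently $H$ acts on the generator $X_{\beta}^{N_{\beta}}$ of $Z^+(V_{\bq})$ through $(\chi_{\beta}\wp)^{N_{\beta}} = \chi_{\beta}^{N_{\beta}}\wp = \varepsilon$ by \eqref{eq:cartan-bosonK-condition}, hence trivially on all of $Z^+(V_{\bq})$. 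Finally, the assumption $\chi_i \wp_{\vert W} = \varepsilon$ forces $W$ to act trivially on each degree-one generator $X_i$, whence $W$ acts trivially on the algebra $\widetilde{\toba}(V_{\bq})$ that they generate; this is \ref{item:equivariant-W-trivial}.

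The only genuinely delicate point is the claim, used for \ref{item:equivariant-Hmod}, that the realization of $V_{\bq}$ over $H$ descends to the distinguished pre-Nichols algebra $\widetilde{\toba}(V_{\bq})$, since, as noted in the remark after the definition of realization, an arbitrary pre-Nichols algebra need not inherit one. The hard part will be to argue that the defining ideal of $\widetilde{\toba}(V_{\bq})$ inside $T(V_{\bq})$ is a Yetter--Drinfeld subobject over $H$: this holds because that ideal is $\Z^{\theta}$-homogeneous and its generators are braided-commutator expressions determined solely by the braiding $c$, which the realization preserves; concretely, each $X_{\beta}$ is built from Weyl-groupoid automorphisms intrinsic to the braided structure, so the relations are stable under both the $H$-action and the $H$-coaction. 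Once this compatibility is in place, the transformation rule $h \cdot X_{\beta} = (\chi_{\beta}\wp)(h)\,X_{\beta}$ used above is immediate, the three equivariance conditions follow as indicated, and the proof concludes by invoking Theorem \ref{thm:fgc-smash}.
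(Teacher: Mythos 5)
Your proposal is correct and follows essentially the same route as the paper: the paper's proof is simply ``analogous to Proposition \ref{exa:qls-bosonK2}, noting that $H$ acts on $X_{\beta}^{N_{\beta}}$ by $(\chi_{\beta}\wp)^{N_{\beta}} = \chi_{\beta}^{N_{\beta}}\wp$,'' i.e.\ verify the three conditions of Definition \ref{def:smash-product-def-seq} for \eqref{defdiagonal} and invoke Theorem \ref{thm:fgc-smash}, exactly as you do. Your extra paragraph justifying that the YD-pair realization over $H$ descends to the distinguished pre-Nichols algebra $\widetilde{\toba}(V_{\bq})$ (via $\Z^{\theta}$-homogeneity of its defining ideal) fills in a step the paper leaves implicit, and is a welcome addition rather than a deviation.
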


\pf The proof is analogous to that of Proposition \ref{exa:qls-bosonK2}, noting that
$H$ acts  on $X_{\beta}^{N_{\beta}}$ by $(\chi_{\beta}\wp)^{N_{\beta}} = \chi_{\beta}^{N_{\beta}}\wp$; then 
\ref{item:equivariant-Z-stable} holds by \eqref{eq:cartan-bosonK-condition}.
\epf

\vspace{1ex}

\subsection{The restricted Jordan plane}\label{subsec:jordan} 
In this subsection, assume $\car \ku = p$ is an odd prime. 
Let  $\Vc(1,2)$ be the 2-dimensional braided vector space
with a basis $\{x,y\}$ and the braiding determined by
\begin{align}\label{equation:basis-block}
\begin{aligned}
c(x \ot  x) &=  x \ot  x,&c(y \ot  x) &=  x \ot  y, \\
c(x \ot  y) &=(y+ x) \ot  x,& c(y \ot  y) &=(y+x) \ot  y.
\end{aligned}
\end{align}
It was shown in \cite{clw} that the Nichols algebra $\toba\left(\Vc(1,2) \right)$, 
also called \emph{the restricted Jordan plane}, is presented as 
\begin{align*}
\toba\left(\Vc(1,2) \right)  \cong \ku\langle x, y\ \vert\  yx-xy+\tfrac{1}{2}x^2,
\ x^p,  \ y^p  \rangle.
\end{align*}
The  \emph{Jordan plane} is the algebra $J =\ku\langle x, y\ \vert\  yx-xy+\tfrac{1}{2}x^2  \rangle$. 
From \cite{clw}*{Lemma 3.8}, 
\begin{align*}
x^ny &=yx^n +\tfrac{n}{2}x^{n+1},
\\
y^n x &= \sum_{k=0}^{n} \binom{n}{k} \frac{(-1)^k k!}{2^k} x^{k+1} y^{n -k}. 
\end{align*}
We conclude that $Z_J \coloneqq\ku \langle x^p, y^p\rangle$ is a central subalgebra of $J$.
Actually, $Z_J$ is a braided Hopf subalgebra  of $J$, cf. \cite{clw}*{Lemma 3.8} and we have a  deformation sequence
\begin{align*}
\Jgot : Z_{J} \hookrightarrow J
\twoheadrightarrow  \toba\left(\Vc(1,2) \right).
\end{align*}
We recover the fact that $\toba\left(\Vc(1,2) \right)$ has fgc \cite{NWW2019}. We turn to
realizations of the restricted Jordan plane 
that can be obtained as follows.

\begin{definition} \cite{andrus-leyva}*{Example 23}
Let $K$ be a Hopf algebra with antipode $\mathcal S$. 
 A \emph{YD-triple} for $K$ is a collection $(g, \chi, \eta)$ where
$(g, \chi)$ is a YD-pair for $K$ and $\eta \in \Der_{\chi,\chi}(K, \ku)$
such that
\begin{align}\label{eq:YD-triple}
&&\eta(h) g &= \eta(h\_2) h\_1 g \Ss(h\_3), & \text{for any } h \in K,
\\ \label{eq:YD-triple-jordan}
&&\chi(g) &= \eta(g) = 1.
\end{align}
\end{definition}

\medbreak
A YD-triple $(g, \chi, \eta)$ gives rise to $\Vc_g(\chi,\eta) \in \yd{K}$, defined as the $\ku$-vector space with a basis $\{x,y\}$, whose $K$-action and $K$-coaction are respectively given by
\begin{align*}
h\cdot x &= \chi(h) x,& h\cdot y&=\chi(h) y + \eta(h)x,&h&\in K;& 
\delta(x) &= g\otimes x,& \delta(y) &= g\otimes y.
\end{align*}
By assumption \eqref{eq:YD-triple-jordan}, 
$\Vc_g(\chi, \eta)\cong \Vc(1,2)$ as a braided vector space, where the braiding of $\Vc_g(\chi, \eta)$ in $\yd{K}$ coincides with that of $\Vc(1,2)$ given in \eqref{equation:basis-block}.

It was observed that a finite-dimensional Hopf algebra that admits a YD-triple $(g, \chi, \eta)$
is not semisimple \cite{andruskiewitsch-natale}*{Remark 4.8}.
For cocommutative Hopf algebras, 
Proposition \ref{prop:def-seq}
implies the following fact.

\begin{prop} \label{exa:jordan}
Let $K$ be a finite-dimensional cocommutative Hopf algebra that admits
a YD-triple $(g, \chi, \eta)$   for $K$. Then 
$\toba \left(\Vc_g(\chi, \eta)\right) \# K$ has fgc. \qed
\end{prop}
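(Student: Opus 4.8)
The plan is to deduce the statement from Proposition \ref{prop:def-seq} (ii). Since $\Vc_g(\chi,\eta)\cong\Vc(1,2)$ as braided vector spaces, the Nichols algebra $\toba(\Vc_g(\chi,\eta))$ is the finite-dimensional restricted Jordan plane, and the deformation sequence $\Jgot\colon Z_J\hookrightarrow J\twoheadrightarrow\toba(\Vc(1,2))$ is already available. As $K$ is cocommutative by hypothesis, it remains to verify that $\Jgot$ is $K$-equivariant in the sense of Definition \ref{def:equiv-defor-seq}, i.e.\ that the realization of $\Vc_g(\chi,\eta)$ in $\yd{K}$ provided by the YD-triple extends to a realization of the entire sequence $\Jgot$. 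This is the heart of the matter: a realization of a braided vector space induces one of $T(V)$ and of $\toba(V)$, but \emph{not} automatically one of an intermediate pre-Nichols algebra such as $J$, nor of its central subalgebra $Z_J$.

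First I would realize the Jordan plane $J=\ku\langle x,y\mid yx-xy+\tfrac12 x^2\rangle$ in $\yd{K}$. The coaction extends harmlessly, since $x$ and $y$ are $g$-homogeneous, so every homogeneous component of $J$ is a $K$-subcomodule. For the action, I extend it to $T(\Vc_g(\chi,\eta))$ as a $K$-module algebra and check that the defining element $r\coloneqq yx-xy+\tfrac12 x^2$ spans a one-dimensional $K$-submodule. Using $h\cdot x=\chi(h)x$ and $h\cdot y=\chi(h)y+\eta(h)x$, a direct computation gives
\[
h\cdot r=(\chi*\chi)(h)\,r+\bigl((\eta*\chi)-(\chi*\eta)\bigr)(h)\,x^2,
\]
where $*$ denotes convolution in $K^*$. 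Because $K$ is cocommutative, $\eta*\chi=\chi*\eta$, so the obstruction term vanishes and $h\cdot r=(\chi*\chi)(h)\,r$. Hence $\ku r$ is $K$-stable, the Hopf ideal it generates is a Yetter--Drinfeld submodule, and $J=T(\Vc_g(\chi,\eta))/(r)$ is realized as a Hopf algebra in $\yd{K}$. Here cocommutativity is exactly what is required, mirroring its role throughout Section \ref{sec:equivariant-deformation}.

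Next I would show that $Z_J=\ku\langle x^p,y^p\rangle$ is a $K$-stable central subalgebra, hence a subobject in $\yd{K}$. The coaction is again unproblematic. For the action, $h\cdot x^p=\chi^{*p}(h)\,x^p\in Z_J$, while expanding $h\cdot y^p$ along the iterated coproduct yields, by cocommutativity, a symmetric combination of the degree-$p$ word-sums $W_k$ ($0\le k\le p$), where $W_k$ is the sum of all length-$p$ monomials in $x,y$ with exactly $k$ letters equal to $x$. The key point is that each $W_k$ already lies in the degree-$p$ component $\ku x^p\oplus\ku y^p$ of $Z_J$: reducing $W_k$ to normal form via the commutation rule $y^n x=\sum_{k}\binom{n}{k}\tfrac{(-1)^k k!}{2^k}x^{k+1}y^{n-k}$ of \cite{clw} collapses, through a hockey-stick identity together with the vanishing of $\binom{p}{j}$ for $0<j<p$ and Wilson's theorem, to an element of $\ku x^p\oplus\ku y^p$ (for $0<k<p$, in fact to a multiple of $x^p$, while $W_0=y^p$). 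Consequently $h\cdot y^p\in Z_J$, so $Z_J$ is a central Hopf subalgebra realized in $\yd{K}$ and $\iota,\pi$ become morphisms in $\yd{K}$, in particular $K$-linear algebra maps.

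With $Z_J$, $J$ and $\toba(\Vc(1,2))$ all realized in $\yd{K}$ and $\iota,\pi$ $K$-linear, the sequence $\Jgot$ is $K$-equivariant; since $K$ is cocommutative and $\toba(\Vc_g(\chi,\eta))$ is finite-dimensional, Proposition \ref{prop:def-seq} (ii), i.e.\ Theorem \ref{theorem:negron}, applies and $\toba(\Vc_g(\chi,\eta))\#K$ has fgc. I expect the main obstacle to be precisely the middle two steps: verifying that the realization descends to the pre-Nichols algebra and to its central subalgebra. Cocommutativity disposes cleanly of the defining relation $r$, but the $K$-stability of $Z_J$ rests on the characteristic-$p$ combinatorial fact that the symmetric word-sums $W_k$ reduce into $\ku x^p\oplus\ku y^p$, and this is the one place where a genuine computation, rather than a purely formal argument, appears unavoidable.
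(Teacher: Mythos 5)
Your proposal is correct and follows exactly the paper's route: the paper proves this proposition by direct appeal to Proposition \ref{prop:def-seq}(ii) (i.e., Theorem \ref{theorem:negron}) applied to the deformation sequence $\Jgot$, leaving the $K$-equivariance of $\Jgot$ entirely implicit, and your argument supplies precisely that missing verification. Both of your key claims are true --- $h\cdot r=(\chi*\chi)(h)\,r$ because cocommutativity forces $\eta*\chi=\chi*\eta$, and each symmetric word-sum $W_k$ with $0<k<p$ lies in $\ku x^p$ --- and for the latter you could avoid the explicit hockey-stick/Wilson computation by invoking Jacobson's formula $(ty+x)^p=t^p y^p+x^p+\sum_{i=1}^{p-1}t^i s_i(y,x)$ in $J[t]$, since every iterated Lie bracket of $x,y$ of degree $d\geq 2$ in the Jordan plane is a scalar multiple of $x^d$, which identifies $W_k$ with $s_{p-k}(y,x)\in\ku x^p$.
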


\begin{example} \cite{andruskiewitsch-natale}*{Remark 4.9}
Let $G$ be a finite group and let $(g, \chi, \eta)$ be a YD-triple for $\ku G$,
so that $g \in Z(G)$ and $\chi \in \Hom_{\rm gps} (G, \ku^{\times})$.
Then the Hopf algebra $H  = \toba(\Vc_g(\chi, \eta)) \# \ku G$ 
fits into the abelian exact sequence 
\begin{align}\label{eq:boso-jordan-gral}
\ku \to K \overset{\iota}{\rightarrow} H \overset{\pi}{\rightarrow} L \to \ku 
\end{align}
defined as follows:
\begin{itemize}[leftmargin=7ex]\renewcommand{\labelitemi}{$\circ$} 

\item $K \coloneqq  \ku \langle x, \gamma: \gamma \in N\rangle 
\simeq \ku\langle x \rangle\# \ku N$, where $N \coloneqq \ker \chi \cap Z(G)\lhd F$;
\item $L \coloneqq \toba(\zeta) \# \ku (G/N) \simeq \ku[\zeta]/ (\zeta^p) \# \ku (G/N)$,
where $\ku \zeta \in \yd{\ku (G/N)}$ is given by the YD-pair $(e, \overline{\chi})$
and $\zeta$ is primitive;
\item $\iota$ is the inclusion and  $\pi$ is 
given by $\pi(x) = 0$, $\pi(y) = \zeta$ and $\pi(\gamma) =$  $\gamma N$ in $G/N$.
\end{itemize}
Observe that \eqref{eq:boso-jordan-gral} is not necessarily split, e.g., when
the exact sequence of groups $1 \to N \rightarrow G \rightarrow G/N \to 1$ is not split.

When $G = \Z^p$, $H$ has fgc by \cite{NWW2019} or by \cite{andruskiewitsch-natale}, 
with different proofs. Proposition \ref{exa:jordan} implies that $H$ has fgc for any $G$.
This fact also confirms that 
\cite{andruskiewitsch-natale}*{Question 1.1}, namely that
an extension of finite-dimensional Hopf algebras has fgc whenever the kernel and the cokernel have fgc,
has a positive answer.
\end{example}  

\vspace{1ex}

\subsection{Sums of blocks and points}\label{subsec:blocks-points} 
In this subsection, assume $\car \ku = p$ is an odd prime.
We discuss informally a class of braided vector spaces, introduced in \cite{aah-oddchar},
which decompose as direct sums of Jordan blocks, super Jordan blocks and labeled points.
Their Nichols algebras are finite-dimensional. 

\medbreak
Precisely, for $\theta\in \N$, we consider the subclass  $\Vs_+$ consisting of braided vector spaces which decompose 
as a direct sum of $\theta$ summands, $t >0$ of them being Jordan blocks and  $\theta - t$ of them being points labeled with $1$ (i.e.,  one-dimensional braided vector spaces with the usual 
transposition) plus some technical conditions, see \cite{andruskiewitsch-natale}*{Section 6} for details. 
For fixed $t$ and $\theta$, the elements of $\Vs_+$ depend on two parameters:

\medbreak
\begin{itemize}  [leftmargin=6ex]\renewcommand{\labelitemi}{$\circ$}
\item a matrix $\bq = (q_{ij})_{i,j \in \I_{\theta}}$ such that 
\begin{align*}
q_{ij}q_{ji} &= 1, & q_{ii} &= 1, & \text{for all } i, j &\in \I_{\theta},\,  i \neq  j;
\end{align*}

\medbreak
\item a family  $\ba = (a_{ij}) _{\substack{i \in \I_{t + 1, \theta}, \\  j \in  \I_{t}}}$
with entries in $\fp$, 
\end{itemize}
where for $k < \ell \in \N$, we set $I_{k,\ell} = \{k, k + 1, . . . , \ell\}$, and $I_\ell = I_{1,\ell}$.

Let $\Vs(\bq, \mathbf{a})$ be the braided vector space corresponding to
$(\bq, \mathbf{a})$. 
By  \cite{andruskiewitsch-natale}*{Lemma 6.3}, 
the Nichols algebra $\toba(\Vs(\bq, \ba))$ is presented by generators 
$x_i$ and $y_j$, for $i\in \I_{\theta}, j\in \I_{t}$, and relations
(45), \dots, (50) in \emph{loc.\  cit}. Let $\Bc$ be the algebra presented by generators 
$x_i$ and $y_j$, for $i\in \I_{\theta}, j\in \I_{t}$, and relations
\begin{align}
\tag{45 ii}
y_jx_j -x_jy_j+\tfrac{1}{2}x_j^2 &=0, & \text{for all } j &\in\I_{t},
\end{align}
that is, the second part of (45), (46), (47), (48), (50) in \emph{loc.\ cit}.
The analogue of $\Bc$ in characteristic 0 was studied in \cite{aah-memoirs};
indeed, it is a Nichols algebra in that setting.
Arguing as in \cite{aah-memoirs}, we see that $\Bc$ is a pre-Nichols algebra that fits into
a deformation sequence
\begin{align*}
\Bg \colon Z \hookrightarrow \Bc
\twoheadrightarrow  \toba\left(\Vs(\bq, \mathbf{a}) \right).
\end{align*}

\begin{prop} \label{exa:jordan+points}
Let $K$ be a finite-dimensional cocommutative Hopf algebra such that $\Vs(\bq, \mathbf{a})$ admits
a realization over $K$. Then 
$\toba \left(\Vs(\bq, \mathbf{a})\right) \# K$ has fgc.
\end{prop}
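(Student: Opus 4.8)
The plan is to reduce to Proposition \ref{prop:def-seq}\,(ii): since $K$ is assumed cocommutative and the deformation sequence $\Bg\colon Z \hookrightarrow \Bc \twoheadrightarrow \toba(\Vs(\bq,\ba))$ has already been produced, it suffices to upgrade the given realization of $\Vs(\bq,\ba)$ over $K$ to a realization of the whole sequence $\Bg$, that is, to check that $\Bg$ is $K$-equivariant. Concretely I must show that $\Bc$ and its central subalgebra $Z$ inherit the Yetter--Drinfeld structure from $V \coloneqq \Vs(\bq,\ba)$, so that $\iota$ and $\pi$ become morphisms in $\yd{K}$; being morphisms in $\yd{K}$ they are in particular $K$-linear, which is all that a $K$-equivariant deformation sequence requires.

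First I would record that a realization of $V$ over $K$ makes $T(V)$ a braided Hopf algebra in $\yd{K}$ and descends to $\toba(V)$. The nontrivial point, and the one that the remark after the definition of realization warns about, is that an arbitrary pre-Nichols algebra need not inherit a realization. Here one uses that $\Bc$ is presented by the braided-intrinsic relations recalled from \cite{andruskiewitsch-natale}*{Lemma 6.3} together with (45\,ii): namely the braided commutators (46)--(50) and the Jordan relation $y_jx_j - x_jy_j + \tfrac12 x_j^2$ for each block. Each such relation is homogeneous for the coaction, and I would verify that it spans a $K$-submodule. For a braided commutator this is automatic because morphisms in $\yd{K}$ commute with the braiding $c$. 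For the Jordan relation $r_j = y_jx_j - x_jy_j + \tfrac12 x_j^2$ a short computation with the action $h\cdot x_j = \chi(h)x_j$, $h\cdot y_j = \chi(h)y_j + \eta(h)x_j$ gives
\[
h\cdot r_j = (\chi*\chi)(h)\,r_j + \bigl((\eta*\chi)-(\chi*\eta)\bigr)(h)\,x_j^2,
\]
and the last bracket vanishes \emph{because $K$ is cocommutative}, so that $\Hom(K,\ku)$ is convolution-commutative. Hence the defining ideal of $\Bc$ is a Yetter--Drinfeld (Hopf) ideal of $T(V)$, $\Bc$ is realized over $K$, and $\pi$ is a morphism in $\yd{K}$.

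Next I would check that $Z$ is stable under the realization. The subalgebra $Z$ is generated by the $p$-th powers of the generators, and, exactly as for the restricted Jordan plane in \cite{clw}*{Lemma 3.8} and its characteristic-$p$ analogue in \cite{aah-memoirs}, these $p$-th powers span Yetter--Drinfeld subobjects: each is coaction-homogeneous (of degree $g_i^p$, resp.\ $g_j^p$), and the derivation contributions to the $K$-action on $y_j^p$ are killed by the characteristic-$p$ binomial identities, so $Z$ is a braided Hopf subalgebra of $\Bc$ in $\yd{K}$ and $\iota$ is a morphism in $\yd{K}$.

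Combining these, $\Bg$ is a $K$-equivariant deformation sequence with $\Bc$ a pre-Nichols algebra of $V$; since $K$ is cocommutative, Proposition \ref{prop:def-seq}\,(ii) (equivalently Theorem \ref{theorem:negron}) yields that $\toba(\Vs(\bq,\ba))\#K$ has fgc. I expect the main obstacle to be precisely the verification that the realization extends to $\Bc$ and $Z$ --- the cancellation of the derivation cross-terms, which is where cocommutativity of $K$ is essential and without which a pre-Nichols algebra carrying a Jordan block would generally fail to be realizable over $K$.
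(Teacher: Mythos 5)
Your proposal is correct and takes the same route as the paper, whose entire proof is the one-line citation of Proposition \ref{prop:def-seq} (case (ii), via the sequence $\Bg$ constructed just before the statement); your verification that $\Bg$ is $K$-equivariant simply makes explicit what the paper leaves implicit there. One remark: the vanishing of $(\eta*\chi)-(\chi*\eta)$ is actually forced by the realization hypothesis alone --- the Nichols ideal $\Jc(V)$ is canonically a Yetter--Drinfeld ideal under \emph{any} realization, and since $r_j\in\Jc(V)$ while $x_j^2\notin\Jc(V)$, your own formula for $h\cdot r_j$ already implies $(\eta*\chi)=(\chi*\eta)$ --- so cocommutativity is not what rescues the realizability of $\Bc$; it is needed only to invoke Theorem \ref{theorem:negron} itself.
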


\pf By Proposition \ref{prop:def-seq}. \epf

This was proved also in \cite{andruskiewitsch-natale}*{Theorem 6.7} by the following arguments:

\medbreak
\begin{itemize}  [leftmargin=*]\renewcommand{\labelitemi}{$\diamond$}
\item Let $\uno$ be the matrix with all entries equal to $1$. 
Then there exists a suitable finite abelian 
group $\varGamma$ such that $\toba \left(\Vs(\uno, \mathbf{a})\right)\# \Bbbk \varGamma$
fits into a split exact sequence 
\begin{align*}
\ku \to W \overset{\iota}{\rightarrow} \toba \left(\Vs(\uno, \mathbf{a})\right)\# \Bbbk \varGamma \overset{\pi}{\rightarrow} \ugo(\lgo) \to \ku
\end{align*}
where $W$ is a commutative Hopf subalgebra and $\lgo$ is a restricted Lie algebra.

\medbreak
\item  Then the Drinfeld double 
$D\left(\toba \left(\Vs(\uno, \mathbf{a})\right)\# \Bbbk \varGamma\right)$
has fgc by \cite{andruskiewitsch-natale}*{Theorem 3.6}. Consequently, 
$\toba \left(\Vs(\uno, \mathbf{a})\right)\# \Bbbk \varGamma$ and 
$\toba \left(\Vs(\uno, \mathbf{a})\right)$ have fgc.

\medbreak
\item For the same  group $\varGamma$, 
$\toba(\Vs(\bq, \ba))\# \ku \Gamma$ 
is a cocycle deformation of $\toba(\Vs(\uno, \ba))\# \ku \Gamma$,
cf.  \cite{andruskiewitsch-natale}*{Corollary A8}. Then
$D\left(\toba \left(\Vs(\bq, \mathbf{a})\right)\# \Bbbk \varGamma\right)$
is isomorphic to
$D\left(\toba \left(\Vs(\uno, \mathbf{a})\right)\# \Bbbk \varGamma\right)$
as algebras, hence it has fgc; a fortiori,
$\toba \left(\Vs(\bq, \mathbf{a})\right)\# \Bbbk \varGamma$ and 
$\toba \left(\Vs(\bq, \mathbf{a})\right)$ have fgc.
\end{itemize}

Now Proposition \ref{exa:jordan+points} says that replacing $\Bbbk \varGamma$ by a 
cocommutative $K$, in particular by $\Bbbk G$ where $G$ is a finite group, 
such that $\Vs(\bq, \mathbf{a})$ is realizable over $K$, we still have that
$\toba \left(\Vs(\bq, \mathbf{a})\right)\# K$
has fgc.
It can be shown that this
Hopf algebra fits into a (not necessarily quasi-split) abelian extension
giving further evidence for a positive answer to \cite{andruskiewitsch-natale}*{Question 1.1}.
On the opposite side, the method of the present paper does not establish that 
$D\left(\toba \left(\Vs(\bq, \mathbf{a})\right)\# \Bbbk \varGamma\right)$ 
has fgc, a fact that remains an open question.

\vspace{1ex}

\subsection*{Funding}
The work of N.~Andruskiewitsch was  partially supported by Secyt (UNC), CONICET (PIP 11220200102916CO), and FONCyT-ANPCyT (PICT-2019-03660). D.~Jaklitsch was supported by the Research Council of Norway - project 324944. V.~Nguyen was partially supported by US NSF grant DMS-2201146. J.~Plavnik was partially supported by US NSF Grant DMS-2146392 and by Simons Foundation Award \#889000 as part of the Simons Collaboration on Global Categorical Symmetries. A.~Shepler was partially supported by Simons Foundation grant \#429539. 

\vspace{1ex}

\subsection*{Disclaimer} The views expressed in this article are those of the author(s) and do not reflect the official policy or position of the U.S. Naval Academy, Department of the Navy, the Department of Defense, or the U.S. Government.  

\vspace{1ex}

\subsection*{Acknowledgments} This project began while the authors were participating in a workshop at the American Institute of
Mathematics, whose hospitality and support are gratefully acknowledged. We thank Cris Negron, Julia Pevtsova, Serge Skryabin, and Sarah Witherspoon for helpful conversations.

\appendix
\section{Flatness for twisted tensor products}
\label{Appendix}

The semidirect product algebras we consider in this paper are examples of twisted tensor products.
Recall that a {\em twisted tensor product
algebra} $Q\ott H$
of two algebras $Q$ and $H$ is the vector space $Q\ot H$
endowed with an algebra structure given by
a {\em twisting map}
$\tau: H\ot Q\rightarrow Q\ot H$,
see \cite{CapSchichlVanzura}.
We use the next results on twisted
tensor products in the proof of Proposition \ref{lema:equivariant}.

\begin{lemma}\label{OneSidedTTP}
  Suppose $Q\ott H$ is a twisted tensor product algebra
  defined by 
  an invertible twisting map $\tau:H\ot Q\rightarrow Q\ot H$.
  If $Q$ is flat over a subalgebra $Q'$
  and $\tau$ restricts to a bijection $H\ot Q'\rightarrow Q'\ot H$,
  then
$Q\ott H$ is flat over
  $Q'\ott H$.
  \end{lemma}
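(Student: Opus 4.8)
The plan is to realize $Q\ott H$ as a module induced from $Q$ and thereby reduce flatness over $Q'\ott H$ to the given flatness of $Q$ over $Q'$. Throughout write $A=Q\ott H$ and $A'=Q'\ott H$. First I would record that the hypothesis $\tau(H\ot Q')=Q'\ot H$ guarantees that $A'$ is genuinely a subalgebra of $A$: the product of two elements of $Q'\ot H$ lands again in $Q'\ot H$ precisely because $\tau$ carries $H\ot Q'$ into $Q'\ot H$. Restricting the multiplication of $A'$ along the unital inclusion $Q'\hookrightarrow A'$, $q'\mapsto q'\ot 1$, equips $A'$ with a right $Q'$-module structure, and this commutes with the regular left $A'$-action by associativity, so $A'$ is an $(A',Q')$-bimodule.

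The key structural step is the left $A'$-module isomorphism
\[
A\ \cong\ A'\ot_{Q'}Q ,
\]
where $Q$ carries its standard left $Q'$-module structure. I would define $\Phi\colon A'\ot_{Q'}Q\to A$ by $\Phi(a'\ot q)=a'\,(q\ot 1)$, the product taken in $A$. This is manifestly left $A'$-linear, and it descends to $\ot_{Q'}$ because $(q'\ot 1)(q\ot 1)=q'q\ot 1$, which is exactly the unit axiom $\tau(1\ot q)=q\ot 1$ of a twisting map. To see that $\Phi$ is bijective I would first show that $A'$ is free as a right $Q'$-module: the restriction $\sigma\coloneqq\tau|_{H\ot Q'}\colon H\ot Q'\to Q'\ot H=A'$ is a bijection by hypothesis, and a short computation shows it is right $Q'$-linear when $H\ot Q'$ carries the obvious free right $Q'$-action, so $A'\cong H\ot_{\ku}Q'$. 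Feeding this into $A'\ot_{Q'}Q\cong H\ot_{\ku}Q$ and tracing $\Phi$ through the identification, one finds that $\Phi$ becomes precisely $\tau\colon H\ot Q\to Q\ot H$, which is invertible by assumption; hence $\Phi$ is an isomorphism of vector spaces, and therefore of left $A'$-modules.

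With the isomorphism in hand, flatness is formal: for any right $A'$-module $N$,
\[
N\ot_{A'}A\ \cong\ N\ot_{A'}(A'\ot_{Q'}Q)\ \cong\ N\ot_{Q'}Q ,
\]
where the restriction of $N$ to $Q'$ is taken along $Q'\hookrightarrow A'$. Since $N\mapsto N|_{Q'}$ is exact and $Q$ is flat as a left $Q'$-module, the composite $N\mapsto N\ot_{A'}A$ is exact, i.e.\ $A=Q\ott H$ is flat over $A'=Q'\ott H$, as claimed.

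I expect the main obstacle to be the verification that $\Phi$ is well defined on the balanced tensor product and that, under the freeness identification $A'\cong H\ot_{\ku}Q'$, it coincides with $\tau$. This is the one place where all three hypotheses (the restricted bijection, the twisting-map unit axioms, and the global invertibility of $\tau$) are used simultaneously, and it is precisely what upgrades the purely vector-space identity $A\cong H\ot_{\ku}Q$ to a left $A'$-module statement compatible with base change along $Q'\hookrightarrow A'$.
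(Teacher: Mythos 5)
Your proof is correct, and it takes a recognizably different route from the paper's, even though both rest on identifying the functor $-\ot_{(Q'\ott H)}(Q\ott H)$ with $-\ot_{Q'}Q$. The paper argues module by module: for each right $(Q'\ott H)$-module $M$ it equips $M\ot_{Q'}Q$ with an auxiliary right $H$-action defined via $\tau^{-1}$, constructs an explicit abelian-group isomorphism $M\ot_{Q'}Q\cong M\ot_{(Q'\ott H)}(Q\ott H)$, and then transfers injectivity of $f\ot\id$ across this identification. You instead prove one structural fact: the left $(Q'\ott H)$-linear map $\Phi\colon (Q'\ott H)\ot_{Q'}Q\to Q\ott H$ is an isomorphism, obtained by showing $\sigma=\tau|_{H\ot Q'}$ is right $Q'$-linear (so $Q'\ott H$ is free as a right $Q'$-module) and that under the resulting identification $\Phi$ becomes $\tau$ itself; flatness is then purely formal from associativity of tensor products and exactness of restriction along $Q'\hookrightarrow Q'\ott H$. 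The hypotheses enter both proofs at corresponding places (your use of global invertibility to see that $\Phi$ is bijective matches the paper's use of $\tau^{-1}$, and the restricted bijection plays the same role in each); indeed the paper's per-module isomorphism is exactly what one obtains by applying $M\ot_{(Q'\ott H)}(-)$ to your $\Phi$. Your organization buys a once-and-for-all verification with naturality in $M$ for free, and it yields stronger transfer statements at no extra cost (if $Q$ is projective, or faithfully flat, over $Q'$, the same isomorphism shows $Q\ott H$ has that property over $Q'\ott H$); the paper's organization stays at the level of elements and avoids the balanced-product and freeness bookkeeping that you correctly identify as the crux of your argument, at the price of introducing $\tau^{-1}$-twisted module structures on every $M$. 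The two verifications you flag do go through: balancedness of $\Phi$ is precisely the unit axiom $\tau(1\ot q)=q\ot 1$, and right $Q'$-linearity of $\sigma$ is the compatibility of $\tau$ with the multiplication of $Q$, restricted to $H\ot Q'\ot Q'$.
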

  \begin{proof}
For any right module $M$ over $Q'\ott H$,
consider the inherited right actions on $M$ by $Q'$ and $H$
obtained by identifying
$Q'\cong Q'\ot \ku 1_{H}$ 
and $H\cong \ku 1_{Q}\ot H$ with vector subspaces of $Q\ot H\cong Q\ott H$.
Note that $H\ot Q$ itself  is a left $Q'$-module
via the map
\begin{equation}\label{FirstAction}
(\id_{H}\ot\,  m_Q)(\tau^{-1}\ot \id _Q) :\ \
Q'\ot (H\ot Q)\rightarrow H\ot Q
\end{equation}
for $m_Q:Q\ot Q\rightarrow Q$ the multiplication map on $Q$.
As $\tau^{-1}(Q' \ot H)\subset H\ot Q'$,
we may endow
$M\ot_{Q'} Q$ with the structure of a right $H$-module 
via the composition
\begin{equation}\label{SecondAction}
  (M\ot_{Q'} Q)\ot H
  \  \xrightarrow{\ \id_M\ot \, \tau^{-1}\ } \
M\ot_{Q'} (H\ot Q)
\  \xrightarrow{ \ \rho\, \ot\, \id _Q\ }\
M\ot_{Q'} Q
  \end{equation}
  for $\rho: M\ot H\rightarrow H$
  the right $H$-module structure map on $M$
  using the fact that twisting commutes with the multiplication in
  $Q$ and in $H$, see \cite{CapSchichlVanzura}.

  We define an abelian group isomorphism   \begin{equation}
  \label{TensoringOverTwistedProduct}
M\ot_{Q'} Q\cong (M\ot_{Q'} Q) \ot_{H} H
\ \xrightarrow{ \id _M\ot \id _Q\ot \id _K}
M\ot_{(Q'\ott H)} (Q \ott H),
\end{equation}
with inverse map given by $ \id _M\ot \id _Q\ot \id _H$,
using  the action in  
(\ref{SecondAction}) and the
 fact that $\tau$ merely
swaps tensor
factors on any input with a tensor component from $k$.

To see that $Q\ott H$ is flat over $Q'\ott H$,
take any injective map $f:M_1\rightarrow M_2$
of right $(Q'\ott H)$-modules.
As $Q$ is flat over $Q'$, this extends to an injective map
$f\ot \id : M_1\ot_{Q'} Q\rightarrow M_2\ot_{Q'} Q$
which is also a map of right $H$-modules
under the action of (\ref{SecondAction})
as $f$ is a right $H$-module homomorphism.
We obtain a commutative diagram
using (\ref{TensoringOverTwistedProduct}):
$$
\begin{tikzcd}
 (M_1\ot_{Q'} Q) \ot_H H
  \arrow[rr, "\ \  f\ot \id \ot \id \ \ "]
  &&  (M_2\ot_{Q'} Q) \ot_H H
   \arrow[d, swap,  " \id  \ot \id  \ot \id  "]
 \\
 M_1\ot_{(Q'\ott H)}  (Q\ott H)
 \arrow[u,  "\id  \ot \id  \ot \id "]
\arrow[rr, swap, "\ \  f\ot (\id \ot \id )\ \ "]
&& M_2\ot_{(Q'\ott H)} (Q\ott H)
\, .
\end{tikzcd}
$$
As the top row map 
  is injective, so too is the bottom row map.
\end{proof}

\begin{cor}\label{FlatTTPs}
 Suppose algebras $Q$ and $H$ are flat over subalgebras $Q'$ and $H'$, respectively, and $Q\ott H$ is a twisted tensor product algebra
  defined by 
  a twisting map $\tau:H\ot Q\rightarrow Q\ot H$
  that restricts to bijections $H\ot Q'\rightarrow Q'\ot H$
  and $H'\ot Q'\rightarrow Q'\ot H'$.
  Then $Q\ott H$ is flat over
  $Q'\ott H'$.
\end{cor}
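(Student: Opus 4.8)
The plan is to factor the inclusion $Q'\ott H'\hookrightarrow Q\ott H$ through the intermediate subalgebra $Q'\ott H$ and then conclude by transitivity of flatness. First I would check that the two hypothesized restricted bijections make $Q'\ott H$ a subalgebra of $Q\ott H$ and $Q'\ott H'$ a subalgebra of $Q'\ott H$: closure under the twisted multiplication is precisely the statement that $\tau$ sends $H\ot Q'$ into $Q'\ot H$ and $H'\ot Q'$ into $Q'\ot H'$. This yields a tower of subalgebra inclusions
\[
Q'\ott H'\ \hookrightarrow\ Q'\ott H\ \hookrightarrow\ Q\ott H,
\]
and since flatness of left modules composes along such a tower, it suffices to prove separately that $Q\ott H$ is flat over $Q'\ott H$ and that $Q'\ott H$ is flat over $Q'\ott H'$.

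The first of these is immediate from Lemma \ref{OneSidedTTP}, applied with the subalgebra $Q'\subseteq Q$: the twisting $\tau$ is invertible and restricts to the bijection $H\ot Q'\to Q'\ot H$, and $Q$ is flat over $Q'$, so the lemma gives flatness of $Q\ott H$ over $Q'\ott H$. For the second I would invoke the mirror of Lemma \ref{OneSidedTTP}, with the two tensor factors interchanged. Here the ambient algebra is $Q'\ott H$ whose twisting map is the restriction $\widetilde{\tau}\colon H\ot Q'\to Q'\ot H$ of $\tau$; by hypothesis this restriction is a bijection, hence automatically invertible, and it further restricts to the bijection $H'\ot Q'\to Q'\ot H'$. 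Together with the flatness of $H$ over $H'$, the mirrored lemma then yields that $Q'\ott H$ is flat over $Q'\ott H'$, completing the proof by the transitivity observed above.

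The main obstacle is that Lemma \ref{OneSidedTTP} is stated only for shrinking the left tensor factor, so the second step requires its right-handed analogue. I would establish this either by repeating the proof of Lemma \ref{OneSidedTTP} with the roles of the two factors exchanged---building, for a right $(Q'\ott H')$-module $M$, the group $M\ot_{H'}H$ with structure maps mirroring \eqref{FirstAction}, \eqref{SecondAction} and the identification \eqref{TensoringOverTwistedProduct}, and then using flatness of $H$ over $H'$---or else by passing to opposite algebras, using the identification of $(Q'\ott H)^{\op}$ with a twisted tensor product $H^{\op}\ott (Q')^{\op}$ built from $\widetilde{\tau}^{-1}$ (see \cite{CapSchichlVanzura} and \cite{SheplerWitherspoon-TTP-AWEZ}) to convert the right-factor statement into a left-factor one to which Lemma \ref{OneSidedTTP} applies directly. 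The only delicate bookkeeping is to verify that this opposite identification (or the factor swap) carries the two restricted bijections and the flatness hypotheses to exactly the form required by the lemma, and to keep the left/right module conventions consistent throughout.
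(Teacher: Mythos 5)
Your proposal is correct and follows essentially the same route as the paper: factor the extension as $Q'\ott H'\hookrightarrow Q'\ott H\hookrightarrow Q\ott H$, apply Lemma \ref{OneSidedTTP} to each step, and conclude by transitivity of flatness. In fact, the paper silently invokes Lemma \ref{OneSidedTTP} for the second step even though that step shrinks the \emph{right} tensor factor, so your explicit discussion of the mirrored lemma (via a factor-swapped proof using $\tau$ in place of $\tau^{-1}$, or via opposite algebras) supplies a justification the paper leaves implicit rather than introducing a different method.
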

\begin{proof}
  By Lemma \ref{OneSidedTTP},
  $Q\ott H$ is flat over $Q'\ott H$
  and $Q'\ott H$ is flat over $Q'\ott H'$,
  hence $Q\ott H$ is flat over $Q'\ott H'$.
\end{proof}

\vspace{1ex}

\bibliographystyle{amsrefs}
\bibliography{refs-boso}

\end{document}